\DeclarePairedDelimiter{\floor}{\lfloor}{\rfloor}
\numberwithin{equation}{section}
\newtheorem{theorem}{Theorem}[section]
\newtheorem{proposition}[theorem]{Proposition}
\newtheorem{corollary}[theorem]{Corollary}
\newtheorem{definition}[theorem]{Definition}
\newtheorem{lemma}[theorem]{Lemma}
\newtheorem{remark}[theorem]{Remark}
\newtheorem{condition}[theorem]{Condition}
\begin{document}
\begin{frontmatter}

\title{Fubini theorem in noncommutative geometry\tnoteref{arc}}

\author{F.~Sukochev}
\ead{f.sukochev@unsw.edu.au}
\address{School of Mathematics and Statistics, University of New South Wales,\\ Kensington, Australia}
\author{D.~Zanin}
\ead{d.zanin@unsw.edu.au}
\address{School of Mathematics and Statistics, University of New South Wales,\\ Kensington, Australia}
\tnotetext[arc]{Research supported by the ARC.}

\begin{abstract} We discuss the Fubini formula in Alain Connes' noncommutative geometry. We present a sufficient condition on spectral triples for which a Fubini formula holds true. The condition is natural and related to heat semigroup asymptotics. We provide examples of spectral triples for which the Fubini formula fails.
\end{abstract}
\date{}
\end{frontmatter}

\section{Introduction}

Fix throughout a separable infinite dimensional Hilbert space $H.$ We let $\mathcal{L}(H)$ denote the algebra of all bounded operators on $H.$ For a compact operator $T$ on $H,$ let $\lambda(k,T)$ and $\mu(k,T)$ denote its $k-$th eigenvalue\footnote{The eigenvalues are counted with 
algebraic multiplicities and arranged so that their absolute values are non-increasing.} and $k-$th singular value (these are the eigenvalues of $|T|$). 

We let $\mathcal{L}_{1,\infty}$ denote the principal ideal in $\mathcal{L}(H)$ generated by the operator ${\rm diag}(\{\frac1{k+1}\}_{k\geq0}).$ Equivalently,
$$\mathcal{L}_{1,\infty}=\{T\in\mathcal{L}(H): \mu(k,T)=O(\frac1{k+1})\}.$$
Note that our notation differs from the one used in \cite{Connes}.

\medskip

The following result is proposed on p.~563 in \cite{Connes}.

\begin{proposition}\label{cfub} Let $(1+D_1^2)^{-p_1/2},(1+D_2^2)^{-p_2/2}\in\mathcal{L}_{1,\infty}$ and let $T_1,T_2\in\mathcal{L}(H).$ If one of the elements $T_1(1+D_1^2)^{-p_1/2},$ $T_2(1+D_2^2)^{-p_2/2}$ is convergent, then
\begin{multline}\label{cfub formula}
\Gamma(1+\frac{p_1+p_2}{2}){\rm Tr}_{\omega}((T_1\otimes T_2)(1+D_1^2\otimes 1+1\otimes D_2^2)^{-(p_1+p_2)/2})=\\
=\Gamma(1+\frac{p_1}{2}){\rm Tr}_{\omega}(T_1(1+D_1^2)^{-p_1/2})\cdot \Gamma(1+\frac{p_2}{2}){\rm Tr}_{\omega}(T_2(1+D_2^2)^{-p_2/2})
\end{multline}
holds for some (Dixmier) trace ${\rm Tr}_{\omega}$ on $\mathcal{L}_{1,\infty}.$
\end{proposition}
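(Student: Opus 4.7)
The plan is to reduce \eqref{cfub formula} to heat kernel asymptotics via two ingredients.

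The first ingredient is semigroup factorization. Since $D_1^2\otimes 1$ and $1\otimes D_2^2$ commute,
\[
e^{-u(1+D_1^2\otimes 1+1\otimes D_2^2)}=e^{-u}\,e^{-uD_1^2}\otimes e^{-uD_2^2},
\]
and tracing against $T_1\otimes T_2$ yields the scalar factorization
\[
{\rm Tr}\bigl((T_1\otimes T_2)e^{-u(1+D_1^2\otimes 1+1\otimes D_2^2)}\bigr)=e^{-u}\,{\rm Tr}(T_1e^{-uD_1^2})\cdot{\rm Tr}(T_2e^{-uD_2^2}).
\]
The Mellin identity $B^{-s}=\Gamma(s)^{-1}\int_0^\infty u^{s-1}e^{-uB}\,du$ with $s=(p_1+p_2)/2$ then represents the trace on the left-hand side of \eqref{cfub formula} as an integral of this product.

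The second ingredient is a Tauberian bridge between Dixmier traces and heat kernels: I would establish that for $S(1+D^2)^{-p/2}\in\mathcal{L}_{1,\infty}$ the accumulation points (as $u\to 0^+$, in a logarithmic Cesaro averaging sense) of $u\mapsto u^{p/2}{\rm Tr}(Se^{-uD^2})$ coincide with the values $\Gamma(1+p/2){\rm Tr}_\omega(S(1+D^2)^{-p/2})$ as $\omega$ varies. Under this bridge, the hypothesis that $T_i(1+D_i^2)^{-p_i/2}$ is convergent amounts to the existence of the limit $c_i:=\lim_{u\to 0^+}u^{p_i/2}{\rm Tr}(T_ie^{-uD_i^2})$, in which case $c_i=\Gamma(1+p_i/2){\rm Tr}_\omega(T_i(1+D_i^2)^{-p_i/2})$ for every $\omega$.

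Combining the two ingredients, suppose without loss of generality that $T_1(1+D_1^2)^{-p_1/2}$ is convergent. Then
\[
u^{(p_1+p_2)/2}{\rm Tr}\bigl((T_1\otimes T_2)e^{-u(1+D_1^2\otimes1+1\otimes D_2^2)}\bigr)=e^{-u}(c_1+o(1))\cdot u^{p_2/2}{\rm Tr}(T_2e^{-uD_2^2}),
\]
so the accumulation points on the left are exactly $c_1$ times those of $T_2(1+D_2^2)^{-p_2/2}$. Since $(T_1\otimes T_2)(1+D_1^2\otimes 1+1\otimes D_2^2)^{-(p_1+p_2)/2}\in\mathcal{L}_{1,\infty}(H\otimes H)$ by Weyl-type estimates for sums of commuting positive operators, the Tauberian bridge applied in reverse produces a singular state $\omega$ realising the value $c_1\cdot\Gamma(1+p_2/2){\rm Tr}_\omega(T_2(1+D_2^2)^{-p_2/2})$ as the LHS Dixmier trace, which (divided by $\Gamma(1+(p_1+p_2)/2)$) is exactly \eqref{cfub formula}.

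The main obstacle is the Tauberian correspondence itself: proving it rigorously in both directions and, more subtly, ensuring that a \emph{single} singular state $\omega$ can be chosen so that its value on the combined operator agrees with the prescribed product on the right. Dixmier's logarithmic averaging procedure does not commute cleanly with multiplication of heat kernel asymptotics, so selecting a common $\omega$ will likely require restricting to a subsequence along which all relevant averages align, then extending via Hahn--Banach to a bona fide singular state compatible with both sides.
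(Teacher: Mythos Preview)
The paper does not prove Proposition~\ref{cfub}; it presents the proposition as Connes' proposal and then shows that, under the natural reading of ``convergent'' as Tauberian (i.e.\ Dixmier-measurable), the formula is \emph{false}---Theorem~\ref{hard example} and Appendix~\ref{easy} construct explicit counterexamples where \eqref{cfub formula} fails for every normalised trace, so no choice of $\omega$ saves it. Your proposal therefore cannot succeed as a proof of the proposition as stated.

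The specific gap is in your Tauberian bridge. You correctly say that the set of values $\Gamma(1+\frac{p}{2}){\rm Tr}_{\omega}(S(1+D^2)^{-p/2})$ coincides with the Ces\`aro accumulation points of $u^{p/2}{\rm Tr}(Se^{-uD^2})$, but you then slide from ``$T_i(1+D_i^2)^{-p_i/2}$ is convergent'' to ``$c_i=\lim_{u\to0^+}u^{p_i/2}{\rm Tr}(T_ie^{-uD_i^2})$ exists''. Tauberian means the Ces\`aro average converges; it does \emph{not} force the raw heat-kernel limit to exist. The counterexamples in the paper exploit precisely this: one can have a unique Dixmier value while $u^{p/2}{\rm Tr}(Te^{-uD^2})$ oscillates, and then the product of two oscillating factors need not have the product of the averages as an accumulation point.

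If instead ``convergent'' is read as theta convergence---your $c_i$ existing as a genuine limit, which is the paper's Condition~\ref{dixmier regular}---then your argument is essentially Connes' own, reproduced in the paper as Theorem~\ref{connes lemma}. The obstacle you flag in your last paragraph is real and is not resolved by Hahn--Banach along a subsequence: the heat-kernel/Dixmier correspondence (Theorem~\ref{bf main theorem}) involves the power map $P_p$, so one obtains ${\rm Tr}_{\omega^{p_1+p_2}}$ on the left and ${\rm Tr}_{\omega^{p_1}},{\rm Tr}_{\omega^{p_2}}$ on the right. The paper gets a single $\omega$ only by restricting to the class $\mathfrak{M}$ of states invariant under all $P_u$ (Theorem~\ref{dixmier fubini}); Appendix~\ref{easy} shows the formula can fail for Dixmier traces outside $\mathfrak{M}$ even when one factor is theta-convergent.
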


The wording on p.~563 in \cite{Connes} is that \lq\lq one of the two terms is convergent\rq\rq is open for interpretation. One possible interpretation is that the operator $T_1(1+D_1^2)^{-p_1/2}$ (or $T_2(1+D_2^2)^{-p_2/2}$) is Tauberian. Recall that an operator $A\in\mathcal{L}_{1,\infty}$ is called Tauberian if there exists a limit
$$\lim_{n\to\infty}\frac1{\log(n+2)}\sum_{k=0}^n\lambda(k,A) = c$$
or, in a form convenient for comparison further below,
$$
\sum_{k=0}^n\lambda(k,A) = c \cdot \log(n+1) + o(\log(n+1)) .
$$
We have therefore rephrased the proposition as one of the two operators is Tauberian. 

The functional
$$
T \mapsto \Gamma(1+\frac{p}{2}) {\rm Tr}_{\omega}(T(1+D^2)^{-p/2})
$$
is considered the $p$-dimensional integral in Connes' noncommutative geometry \cite{Connes}. That $T(1+D^2)^{-p/2}$ is Tauberian implies that the functional is independent of which Dixmier trace ${\rm Tr}_{\omega}$ is used to define the functional, a property called measurability. The result proposed on p. 563 in \cite{Connes} is a Fubini formulation for noncommutative geometry, emulating the classical Fubini theorem where the integral on the product space is calculated from the product of the integrals provided one of the integral exists.

In recent personal communication, Professor Connes has kindly explained to the authors that the convergence he had in mind is \lq\lq the convergence in the theta function formula (which in \cite{Connes} is 4 lines above 2.Example a)). The assumed theta convergence is clearly stronger than the convergence of its Cesaro means and all the counter examples of the paper are about this nuance. As shown in Lemma \ref{connes lemma} and Theorem \ref{dixmier fubini}, the Fubini formula indeed holds under theta convergence, by paying attention to the choice of the limiting processes, this is due to Professor Connes and we are grateful for his permission to include his proof into the paper.\rq\rq

Our aim is to study the Fubini formula in detail. We show that the proposal in Proposition~\ref{cfub} does not hold under the condition that one of the terms is Tauberian. It does not hold either with an amended condition that both terms are Tauberian. It does not hold if we ask if one or both of the terms satisfy the stronger condition that
$$
\sum_{k=0}^n\lambda(k,T(1+D^2)^{-p/2}) = c \cdot \log(n+1) + O(1) .
$$
However, we show that there are natural conditions on the terms such that the Fubini formula as stated does hold. As explained above, one of them (see Condition \ref{dixmier regular} below) is also due to Professor Connes.

To state our results we need some definitions. The following terminology was recently introduced in \cite{CRSZ}.

\begin{definition} An operator $A\in\mathcal{L}_{1,\infty}$ is universally measurable if $\varphi(A)$ does not depend on the normalised\footnote{A trace on $\mathcal{L}_{1,\infty}$ is a unitarily invariant linear functional on $\mathcal{L}_{1,\infty}.$ It is normalised if $\varphi({\rm diag}(\{1,\frac12,\frac13,\cdots\}))=1.$} trace $\varphi$ on $\mathcal{L}_{1,\infty}.$ Equivalently (see Theorem~\ref{spectral})
$$
\sum_{k=0}^n\lambda(k,A) = c \cdot \log(n+1) + O(1) .
$$
\end{definition}

Clearly being universally measurable is stronger than being Tauberian. Proposition~\ref{cfub} is false if we show that the same proposition is false for universally measurable operators. 

\begin{definition} We say that a $(p,\infty)-$summable spectral triple\footnote{We refer to \cite{Connes} for the definition of a spectral triple.} $(\mathcal{A},H,D)$ admits a noncommutative integral if, for every $T\in\mathcal{A},$ the operator $T(1+D^2)^{-p/2}$ is universally measurable. In this case, we set
$$\fint(T)=\varphi(T(1+D^2)^{-p/2}),\quad T\in\mathcal{A}$$
for every normalised trace $\varphi$ on $\mathcal{L}_{1,\infty}.$
\end{definition}

The following condition is an analogue of the heat semigroup asymptotics found in Lemma 1.9.2 in \cite{Gilkey}. It is satisfied by all commutative spectral triples of Riemannian manifolds (see the proof of Proposition 3.23 and Theorem 3.24 in \cite{Rosenberg}). Noncommutative tori also satisfies this condition (see the proof of Corollary \ref{torus cor}).

\begin{condition}\label{former regular} $(\mathcal{A},H,D)$ is a $(p,\infty)-$summable spectral triple such that, for every $T\in\mathcal{A},$ there exists $\varepsilon>0$ such that
\begin{equation}\label{ct def}
{\rm Tr}(Te^{-(tD)^2})=\frac{c(T)}{t^p}+O(\frac1{t^{p-\varepsilon}}),\quad t\to0.
\end{equation}
\end{condition}

Our main Fubini theorem can be stated as follows.

\begin{theorem}\label{fubini} Suppose the spectral triples $(\mathcal{A}_1,H_1,D_1)$ and $(\mathcal{A}_2,H_2,D_2)$ satisfy the Condition \ref{former regular}. Then
\begin{enumerate}[{\rm (a)}]
\item $(\mathcal{A}_1,H_1,D_1)$ and $(\mathcal{A}_2,H_2,D_2)$ admit a noncommutative integral.
\item $(\mathcal{A}_1\otimes\mathcal{A}_2,H_1\otimes H_2,(D_1^2\otimes 1+1\otimes D_2^2)^{1/2})$ satisfies the Condition \ref{former regular} and admits a noncommutative integral.
\item For every $T_1\in\mathcal{A}_1,$ $T_2\in\mathcal{A}_2,$ and for every normalised trace $\varphi$ on $\mathcal{L}_{1,\infty},$ we have
$$\Gamma(1+\frac{p_1+p_2}{2})\varphi((T_1\otimes T_2)(1+D_1^2\otimes 1+1\otimes D_2^2)^{-\frac{p_1+p_2}{2}})=$$
$$=\Gamma(1+\frac{p_1}{2})\varphi(T_1(1+D_1^2)^{-\frac{p_1}{2}})\cdot\Gamma(1+\frac{p_2}{2})\varphi(T_2(1+D_2^2)^{-\frac{p_2}{2}}).$$
\end{enumerate}
\end{theorem}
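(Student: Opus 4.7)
The plan is to reduce the theorem to a single Tauberian statement, which I will call the \emph{Main Lemma}: if $(\mathcal{A},H,D)$ satisfies Condition~\ref{former regular}, then for every $T\in\mathcal{A}$ the operator $T(1+D^2)^{-p/2}$ is universally measurable, and
$$\varphi(T(1+D^2)^{-p/2})=\frac{c(T)}{\Gamma(1+p/2)}$$
for every normalised trace $\varphi$ on $\mathcal{L}_{1,\infty}$, where $c(T)$ is the constant from \eqref{ct def}. Granting this, the three parts of the theorem will fall out by direct manipulation.

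For the Main Lemma the strategy is to pass from heat-kernel asymptotics to partial-sum asymptotics of the eigenvalues of $T(1+D^2)^{-p/2}$. Substituting $s=t^2$ in \eqref{ct def} gives $\mathrm{Tr}(Te^{-sD^2})=c(T)s^{-p/2}+O(s^{-p/2+\delta})$ as $s\to 0^+$. Via the Mellin identity $\Gamma(z/2)\mathrm{Tr}(T(1+D^2)^{-z/2})=\int_0^\infty s^{z/2-1}e^{-s}\mathrm{Tr}(Te^{-sD^2})\,ds$, this translates into the statement that the zeta function $\zeta_T(z)=\mathrm{Tr}(T(1+D^2)^{-z/2})$ is holomorphic on $\{\mathrm{Re}(z)>p\}$, extends meromorphically across $\mathrm{Re}(z)=p$ with a simple pole of residue $2c(T)/\Gamma(p/2)$, and is holomorphic on the rest of a strip $\{\mathrm{Re}(z)>p-\delta'\}$. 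A quantitative Karamata-type Tauberian theorem with remainder then yields the partial-sum asymptotic
$$\sum_{k=0}^{n}\lambda(k,T(1+D^2)^{-p/2})=\frac{c(T)}{\Gamma(1+p/2)}\log(n+1)+O(1),$$
which is exactly universal measurability with the claimed constant, using the equivalent characterisation in the definition of universally measurable operator.

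Given the Main Lemma, part (a) is immediate. For part (b), set $D^2=D_1^2\otimes 1+1\otimes D_2^2$; since the heat semigroups factorise and the trace is multiplicative on tensor products,
$$\mathrm{Tr}((T_1\otimes T_2)e^{-(tD)^2})=\mathrm{Tr}(T_1 e^{-(tD_1)^2})\cdot \mathrm{Tr}(T_2 e^{-(tD_2)^2})$$
for $T_1\in\mathcal{A}_1$ and $T_2\in\mathcal{A}_2$. Multiplying the two expansions from \eqref{ct def} yields a leading term $c(T_1)c(T_2)t^{-(p_1+p_2)}$ with error of order $t^{-(p_1+p_2)+\min(\varepsilon_1,\varepsilon_2)}$. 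Extending by linearity to the algebraic tensor product verifies Condition~\ref{former regular} for the product triple with $c(T_1\otimes T_2)=c(T_1)c(T_2)$; a second application of the Main Lemma delivers the noncommutative integral. Part (c) follows by substituting the Main Lemma formula into both sides of the Fubini identity: the Gamma factors cancel and each side reduces to $c(T_1)c(T_2)$.

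The principal obstacle is the Main Lemma. Converting a heat-kernel remainder of order $t^{-p+\varepsilon}$ into a Cesaro remainder of order $O(1)$---as opposed to merely $o(\log n)$---requires a sharp, quantitative Karamata theorem; the usual Tauberian form gives only ordinary measurability, whereas here we need independence from the choice of trace $\varphi$, i.e.\ universal measurability. It is exactly the power-law error in \eqref{ct def}, rather than a generic $o(t^{-p})$ hypothesis, that buys this upgrade. Once this sharp Tauberian estimate is in place, the remaining steps in (a)--(c) are essentially formal.
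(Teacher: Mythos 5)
The overall architecture of your argument matches the paper's: a Main Lemma asserting that Condition~\ref{former regular} yields universal measurability with the explicit constant $c(T)/\Gamma(1+p/2)$ (this is Lemma~\ref{integral exists} in the paper), followed by the factorisation of heat traces on tensor products for parts (b) and (c). Given the Main Lemma, your treatment of (b) and (c) is correct and essentially the same as the paper's.

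However, your proof of the Main Lemma has a genuine gap, and you correctly flag it yourself as the ``principal obstacle.'' You appeal to ``a quantitative Karamata-type Tauberian theorem with remainder'' to convert a heat-kernel asymptotic with power-law error into an $O(1)$ remainder on the partial sums $\sum_{k\leq n}\lambda(k,T(1+D^2)^{-p/2})$, but no such off-the-shelf result exists in the form you need. The classical Karamata/Hardy--Littlewood theorems and their quantitative refinements concern positive sequences or positive measures; here $T(1+D^2)^{-p/2}$ is in general neither positive nor normal, and its eigenvalue sequence $\lambda(k,\cdot)$ (ordered by modulus, with algebraic multiplicity) is signed/complex. Passing from the Abel-type statement \eqref{ct def} to a Ces\`aro-type statement with $O(1)$ remainder for such an operator is precisely the technical core of the paper, and it is not a citation but a proof. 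The paper also does \emph{not} go through the zeta function: it integrates the heat-trace asymptotic in $t\in[s,1]$ to obtain
$${\rm Tr}\bigl(T(1+D^2)^{-p/2}\Phi(s(1+D^2)^{p/2})\bigr)=\frac{c(T)}{\Gamma(1+p/2)}\log\tfrac1s+O(1),\qquad \Phi(s)=\frac1{\Gamma(1+p/2)}\int_s^\infty e^{-u^{2/p}}du,$$
and then invokes Lemma~\ref{crsz-like2} (proved from Lemma~\ref{crsz-like1}, Lemma~8 of \cite{CRSZ}, and the spectral characterisation Theorem~\ref{spectral}), which is exactly the ``sharp Tauberian estimate'' your sketch presupposes. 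Without reproducing a result of this type, the argument is incomplete at its decisive step.
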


In particular, a Fubini formula holds for noncommutative tori, for sphere $\mathbb{S}^2$ and for the quantum group SU$_q(2).$ The proofs for noncommutative tori extend the idea used in the proof of the main result of \cite{LPS}.

\begin{corollary}\label{torus cor} Let $(\mathcal{A}_1,H_1,D_1)$ and $(\mathcal{A}_2,H_2,D_2)$ be noncommutative tori. For every $T_1\in\mathcal{A}_1,$ $T_2\in\mathcal{A}_2,$ and for every normalised trace $\varphi$ on $\mathcal{L}_{1,\infty},$ we have
$$\Gamma(1+\frac{p_1+p_2}{2})\varphi((T_1\otimes T_2)(1+D_1^2\otimes 1+1\otimes D_2^2)^{-\frac{p_1+p_2}{2}})=$$
$$=\Gamma(1+\frac{p_1}{2})\varphi(T_1(1+D_1^2)^{-\frac{p_1}{2}})\cdot\Gamma(1+\frac{p_2}{2})\varphi(T_2(1+D_2^2)^{-\frac{p_2}{2}}).$$
\end{corollary}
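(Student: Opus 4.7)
The strategy is to deduce the corollary from Theorem~\ref{fubini}(c) by verifying that the noncommutative torus spectral triples satisfy Condition~\ref{former regular}; once established, the Fubini formula in the corollary is an immediate specialisation.

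Recall that the standard $(d_i,\infty)$-summable triple of the noncommutative $d_i$-torus has $\mathcal{A}_i$ the smooth subalgebra generated by unitaries $U_i^{(1)},\dots,U_i^{(d_i)}$ satisfying the Weyl relations, $H_i=L^2(\mathcal{A}_i,\tau_i)\otimes\mathbb{C}^{N_i}$ with $N_i=2^{\lfloor d_i/2\rfloor}$, and $D_i$ the usual Dirac operator. On the orthonormal basis $\{\xi_k\otimes e_j\}_{k\in\mathbb{Z}^{d_i},\,1\le j\le N_i}$, where $\xi_k$ is the image of $U_i^k$ in the GNS Hilbert space, the operator $D_i^2$ is diagonal with eigenvalues $|k|^2$, while $U_i^m$ acts as a unitary translation $\xi_k\mapsto\chi(m,k)\xi_{m+k}$ on the first tensor factor (for a unimodular phase $\chi$) and trivially on the spinor factor. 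Since $\langle\xi_k,U_i^m\xi_k\rangle=\delta_{m,0}$, writing $T=\sum_m a_m U_i^m\in\mathcal{A}_i$ with Schwartz-class coefficients gives
\[
{\rm Tr}(Te^{-(tD_i)^2})=\sum_m a_m\,{\rm Tr}(U_i^m e^{-(tD_i)^2})=\tau_i(T)\cdot N_i\cdot\sum_{k\in\mathbb{Z}^{d_i}}e^{-t^2|k|^2},
\]
a termwise computation justified by the absolute convergence $\sum_m|a_m|<\infty$. Poisson summation then yields $\sum_k e^{-t^2|k|^2}=\pi^{d_i/2}t^{-d_i}+O(e^{-\pi^2/t^2})$ as $t\to 0$, so Condition~\ref{former regular} holds with $c(T)=N_i\pi^{d_i/2}\tau_i(T)$ and an exponentially small error, far stronger than required.

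The principal technical point is the selection rule $\langle\xi_k,U_i^m\xi_k\rangle=\delta_{m,0}$, which encodes the tracial identity $\tau_i(U_i^m)=\delta_{m,0}$. This is the noncommutative-torus analogue of the vanishing-of-off-diagonal-terms observation driving the proof of the main result in~\cite{LPS}; once it is secured, the heat-trace asymptotic collapses to the classical Poisson-summation computation above and the corollary follows by applying Theorem~\ref{fubini}(c) to the two factors. The only subtlety worth isolating is the interchange of sum and trace for non-polynomial $T$, which needs the Schwartz decay of the Fourier coefficients together with the uniform bound ${\rm Tr}(U_i^m e^{-(tD_i)^2})=\delta_{m,0}\cdot N_i\sum_k e^{-t^2|k|^2}$.
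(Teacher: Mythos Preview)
Your proposal is correct and follows essentially the same route as the paper: verify Condition~\ref{former regular} for the noncommutative torus by a direct heat-trace computation in the standard basis, then invoke Theorem~\ref{fubini}. The paper's version (Lemma~\ref{torus lemma}) is marginally cleaner in that it observes $\langle\pi(x)(e_m\otimes u_{\mathbf{k}}),e_m\otimes u_{\mathbf{k}}\rangle=\tau(x)$ directly for every $x\in L_\infty(\mathbb{T}^p_\Theta)$, which bypasses your Fourier expansion and the attendant sum--trace interchange and thereby covers the full algebra $\mathcal{A}=L_\infty(\mathbb{T}^p_\Theta)$ rather than only smooth elements; it also uses the elementary estimate of Lemma~\ref{pois lemma}\eqref{pois torus} in place of full Poisson summation.
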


\begin{corollary}\label{sphere suq2 corollary} Let $(\mathcal{A}_1,H_1,D_1)$ and $(\mathcal{A}_2,H_2,D_2)$ be spectral triples which correspond either to sphere $\mathbb{S}^2$ or to the quantum group SU$_q(2).$ For every $T_1\in\mathcal{A}_1,$ $T_2\in\mathcal{A}_2,$ and for every normalised trace $\varphi$ on $\mathcal{L}_{1,\infty},$ we have
$$\Gamma(1+\frac{p_1+p_2}{2})\varphi((T_1\otimes T_2)(1+D_1^2\otimes 1+1\otimes D_2^2)^{-\frac{p_1+p_2}{2}})=$$
$$=\Gamma(1+\frac{p_1}{2})\varphi(T_1(1+D_1^2)^{-\frac{p_1}{2}})\cdot\Gamma(1+\frac{p_2}{2})\varphi(T_2(1+D_2^2)^{-\frac{p_2}{2}}).$$
\end{corollary}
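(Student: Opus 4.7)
The strategy is to invoke Theorem \ref{fubini}. Since the claimed identity is exactly statement (c) of that theorem, it suffices to verify that each of the two candidate spectral triples satisfies Condition \ref{former regular}. Once that is done, parts (a)--(c) of Theorem \ref{fubini} apply verbatim.

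For $\mathbb{S}^2$ this is essentially classical. The triple is the Dirac triple of the round sphere and is $(p,\infty)$-summable with $p=2$. Because $D^2$ is a positive elliptic operator of order two on the spinor bundle, Gilkey's Lemma 1.9.2 provides a complete asymptotic expansion ${\rm Tr}(Te^{-tD^2})\sim\sum_{k\geq 0} a_k(T)\,t^{(k-p)/2}$ as $t\to 0^+$ for every smooth endomorphism $T$, with remainder of arbitrary polynomial order. Isolating the leading term and rescaling $t\mapsto t^2$ yields \eqref{ct def} with, say, $\varepsilon=1/2$. This is exactly what is already cited via Rosenberg's Proposition 3.23 and Theorem 3.24.

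For $SU_q(2)$ the approach is to use the Chakraborty--Pal spectral triple together with its explicit spectral data: the eigenvalues of $D$ are integers with polynomially growing multiplicities, and the eigenvectors are indexed by matrix coefficients of the irreducible corepresentations. For $T$ in the coordinate algebra, Peter--Weyl decomposition expresses $T$ as a rapidly decreasing sum of matrix coefficients. The heat trace ${\rm Tr}(Te^{-(tD)^2})$ then splits along irreducible corepresentations: the trivial (scalar) block contributes the polar term $c(T)t^{-p}$, with $c(T)$ proportional to the Haar state of $T$, while each nontrivial block is off-diagonal with respect to the spectral projections of $D$ and, using Clebsch--Gordan type recursions for quantum $SU(2)$, contributes either an exponentially small quantity or a correction of order $t^{-p+\varepsilon}$.

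The main obstacle is establishing the power-saving error term in the $SU_q(2)$ case: unlike $\mathbb{S}^2$, no classical pseudodifferential calculus underlies the heat expansion, so one must control the interaction of an arbitrary smooth $T$ with the spectral decomposition of the Chakraborty--Pal Dirac operator. I would either invoke the asymptotic analyses already present in the Chakraborty--Pal, Connes, and Neshveyev--Tuset literature on this triple, or carry out the block-wise estimate described above directly using the quantum Clebsch--Gordan coefficients. With Condition \ref{former regular} verified in each case, Theorem \ref{fubini}(c) delivers the stated Fubini identity.
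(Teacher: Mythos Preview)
Your overall strategy --- verify Condition \ref{former regular} for each triple and then invoke Theorem \ref{fubini}(c) --- is exactly the paper's. For the sphere, however, you and the paper take different routes: you appeal to Gilkey's general heat-kernel expansion, while the paper (Lemma \ref{sphere ok}) exploits the ${\rm SU}(2)$-equivariance of the Dirac operator to reduce ${\rm Tr}(\pi(f)e^{-t^2D^2})$ to a scalar multiple of ${\rm Tr}(e^{-t^2D^2})$, followed by an explicit eigenvalue count via Lemma \ref{pois lemma}. Both are valid; yours is more general in scope but only directly covers smooth $f$, whereas the paper's elementary argument works for every $f\in L_\infty(\mathbb{S}^2)$.

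For ${\rm SU}_q(2)$ your sketch is in the right spirit but glosses over the main technical point, and contains a small inaccuracy: the leading coefficient is proportional to the trace $\tau$ introduced by Connes, not to the Haar state (the paper is explicit about this distinction). More substantively, your dichotomy ``trivial block gives the polar term, nontrivial blocks are off-diagonal and hence negligible'' is too coarse. The elements $t^j_{r,s}$ with $(r,s)\neq(0,0)$ are indeed off-diagonal with respect to the spectral projections of $D$ and contribute zero (Step~1 of Lemma \ref{suq2 ok}), but the ``scalar'' sector $\mathcal{A}[0,0]$ is not one-dimensional --- it contains all polynomials in $cc^*$ --- and the heart of the argument (Step~2) is showing that these diagonal-but-$\tau$-vanishing elements contribute only $O(t^{-2})$, one full power below the leading $t^{-3}$. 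This relies on the nontrivial estimate ${\rm Tr}(p_l M_{cc^*} p_l)=O(l)$ (rather than the naive $O(l^2)$), extracted from the explicit action of $c,c^*$ on the basis $t^l_{m,n}$. Your plan does not anticipate this step, and the literature you cite does not supply it off the shelf in the form needed here.
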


Condition \ref{dixmier regular}, Theorems \ref{connes lemma} and \ref{dixmier fubini} below were suggested by Professor Connes. We are grateful for this valuable addition to the paper.

\begin{condition}\label{dixmier regular} $(\mathcal{A},H,D)$ is a $(p,\infty)-$summable spectral triple such that, for every $T\in\mathcal{A},$ we have
\begin{equation}\label{ctw def}
t^p{\rm Tr}(Te^{-(tD)^2})\to c(T),\quad t\to0.
\end{equation}
\end{condition}

In what follows, we use a notation $\omega^u=\omega\circ P_u,$ $u>0.$ We refer the reader to Section \ref{prel section} for the definition of Dixmier traces.

\begin{theorem}\label{connes lemma} Let $\omega=\omega\circ M$ be a state on $L_{\infty}(0,\infty).$ Suppose that the spectral triple $(\mathcal{A}_1,H_1,D_1)$ (or $(\mathcal{A}_2,H_2,D_2)$) satisfies Condition \ref{dixmier regular}. For every $T_1\in\mathcal{A}_1,$ $T_2\in\mathcal{A}_2,$ we have
$$\Gamma(1+\frac{p_1+p_2}{2}){\rm Tr}_{\omega^{p_1+p_2}}((T_1\otimes T_2)(1+D_1^2\otimes 1+1\otimes D_2^2)^{-\frac{p_1+p_2}{2}})=$$
$$=\Gamma(1+\frac{p_1}{2}){\rm Tr}_{\omega^{p_1}}(T_1(1+D_1^2)^{-\frac{p_1}{2}})\cdot\Gamma(1+\frac{p_2}{2}){\rm Tr}_{\omega^{p_2}}(T_2(1+D_2^2)^{-\frac{p_2}{2}}).$$
\end{theorem}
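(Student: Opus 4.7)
The strategy is to express each of the three Dixmier traces appearing in the statement as a heat-kernel $\omega$-limit evaluated on a \emph{common} time variable $t\to 0^+$, then to use the multiplicativity of the heat semigroup under tensor products,
$$e^{-t^2(D_1^2\otimes 1+1\otimes D_2^2)}=e^{-(tD_1)^2}\otimes e^{-(tD_2)^2},$$
to factorise the left-hand side, and finally to extract the genuine limit from the factor to which Condition~\ref{dixmier regular} applies.

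The first step is a heat-kernel representation of the Dixmier trace: for every $(p,\infty)$-summable spectral triple $(\mathcal{A},H,D)$ and every $T\in\mathcal{A}$,
$$\Gamma\bigl(1+\tfrac{p}{2}\bigr){\rm Tr}_{\omega^p}\bigl(T(1+D^2)^{-p/2}\bigr)=\omega\bigl(t\longmapsto t^p\,{\rm Tr}(Te^{-(tD)^2})\bigr).$$
I would derive this by writing $(1+D^2)^{-p/2}$ as a Mellin integral of $e^{-s(1+D^2)}$, making the substitution $s=t^2$, and observing that the reparametrisation $\omega^u=\omega\circ P_u$ is designed precisely so that the single base state $\omega$ on $L_\infty(0,\infty)$ computes the resulting integral at every dimension $u$. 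Applying this identity to each of the three spectral triples involved and combining with the heat-kernel factorisation above yields
$$\Gamma\bigl(1+\tfrac{p_1+p_2}{2}\bigr){\rm Tr}_{\omega^{p_1+p_2}}\bigl((T_1\otimes T_2)(1+D_1^2\otimes 1+1\otimes D_2^2)^{-(p_1+p_2)/2}\bigr)=\omega(fg),$$
where $f(t)=t^{p_1}\,{\rm Tr}(T_1e^{-(tD_1)^2})$ and $g(t)=t^{p_2}\,{\rm Tr}(T_2e^{-(tD_2)^2})$.

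Without loss of generality the first triple satisfies Condition~\ref{dixmier regular}, so $f(t)\to c(T_1)$ as $t\to 0^+$ as a genuine limit, while $g$ is only known to be bounded. The hypothesis $\omega=\omega\circ M$ forces $\omega$ to annihilate every bounded function vanishing at the origin, so $\omega(fg)=c(T_1)\,\omega(g)$. Then $c(T_1)=\Gamma(1+\tfrac{p_1}{2}){\rm Tr}_{\omega^{p_1}}(T_1(1+D_1^2)^{-p_1/2})$ because under Condition~\ref{dixmier regular} the heat-kernel function is asymptotically constant and hence returned unchanged by any state; and $\omega(g)=\Gamma(1+\tfrac{p_2}{2}){\rm Tr}_{\omega^{p_2}}(T_2(1+D_2^2)^{-p_2/2})$ is exactly the heat-kernel representation from the first step. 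Combining these gives the claimed product formula.

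The main obstacle is arranging the first step so that the same base state $\omega$ recovers ${\rm Tr}_{\omega^{p_1}}$, ${\rm Tr}_{\omega^{p_2}}$ and ${\rm Tr}_{\omega^{p_1+p_2}}$ by acting on the \emph{same} time variable $t\to 0^+$. Without this uniformity in $p$ the three heat-kernel functions would live on three different time scales, and the crucial factorisation $\omega(fg)=c(T_1)\omega(g)$ could not be invoked. It is precisely for this uniformity that the rescaled states $\omega^u=\omega\circ P_u$ are employed in the statement rather than a generic family of Dixmier traces.
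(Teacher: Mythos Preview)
Your strategy coincides with the paper's: invoke the Benameur--Fack heat-kernel formula (Theorem~\ref{bf main theorem}), factorise the heat semigroup on the tensor product, pull out the genuine limit $c(T_1)$ from the factor satisfying Condition~\ref{dixmier regular}, and then identify the remaining $\omega$-limit with the second Dixmier trace.

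Two points of caution, though. First, in the paper's conventions the state $\omega$ is a generalised limit as $t\to\infty$, not as $t\to 0$: Theorem~\ref{bf main theorem} reads
\[
\Gamma\bigl(1+\tfrac{p}{2}\bigr){\rm Tr}_{\omega^p}\bigl(T(1+D^2)^{-p/2}\bigr)=\omega\bigl(t\mapsto t^{-p}\,{\rm Tr}(Te^{-t^{-2}D^2})\bigr),
\]
and your version $\omega\bigl(t\mapsto t^{p}\,{\rm Tr}(Te^{-(tD)^2})\bigr)$ differs from it by the substitution $t\mapsto 1/t$, under which $\omega$ is \emph{not} invariant. With the correct convention the function $f$ converges to $c(T_1)$ at infinity, and that is where $\omega$ lives. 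Second, the step $\omega(fg)=c(T_1)\,\omega(g)$ does not follow from the hypothesis $\omega=\omega\circ M$; it follows from $\omega$ being a singular state vanishing on bounded functions that tend to zero at infinity (this is built into the set-up of \cite{BF}). The $M$-invariance is what makes the heat-kernel representation of Theorem~\ref{bf main theorem} valid, and the paper simply quotes that theorem rather than re-deriving it via the Mellin transform.
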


Theorem \ref{connes lemma} allows us to state another version of Fubini theorem as follows.

\begin{theorem}\label{dixmier fubini} Suppose that the spectral triple $(\mathcal{A}_1,H_1,D_1)$ (or $(\mathcal{A}_2,H_2,D_2)$) satisfies the Condition \ref{dixmier regular}. For every $T_1\in\mathcal{A}_1,$ $T_2\in\mathcal{A}_2,$ and for every Dixmier trace ${\rm Tr}_{\omega}\in\mathfrak{M}$\footnote{Here, $\mathfrak{M}$ is a subclass of Dixmier traces specified in the next section.} on $\mathcal{L}_{1,\infty},$ we have
$$\Gamma(1+\frac{p_1+p_2}{2}){\rm Tr}_{\omega}((T_1\otimes T_2)(1+D_1^2\otimes 1+1\otimes D_2^2)^{-\frac{p_1+p_2}{2}})=$$
$$=\Gamma(1+\frac{p_1}{2}){\rm Tr}_{\omega}(T_1(1+D_1^2)^{-\frac{p_1}{2}})\cdot\Gamma(1+\frac{p_2}{2}){\rm Tr}_{\omega}(T_2(1+D_2^2)^{-\frac{p_2}{2}}).$$
\end{theorem}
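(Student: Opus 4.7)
My plan is to reduce Theorem \ref{dixmier fubini} directly to Theorem \ref{connes lemma}, by exploiting the extra invariance encoded in the subclass $\mathfrak{M}$. The conclusion of Theorem \ref{connes lemma} is a Fubini identity involving three a priori different Dixmier traces, namely ${\rm Tr}_{\omega^{p_1+p_2}}$ on the left and ${\rm Tr}_{\omega^{p_1}}, {\rm Tr}_{\omega^{p_2}}$ on the right. The content of Theorem \ref{dixmier fubini} is that once ${\rm Tr}_\omega$ is restricted to $\mathfrak{M}$, all three collapse to the single trace ${\rm Tr}_\omega$.

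First I would check that Theorem \ref{connes lemma} is applicable to $\omega$. Every Dixmier trace is, by construction, generated by an $M$-invariant state on $L_\infty(0,\infty)$, so the hypothesis $\omega = \omega\circ M$ is automatic. Applying Theorem \ref{connes lemma} to this $\omega$ under Condition \ref{dixmier regular} produces precisely the Fubini identity with the rescaled traces ${\rm Tr}_{\omega^{p_1}}, {\rm Tr}_{\omega^{p_2}}, {\rm Tr}_{\omega^{p_1+p_2}}$. Next, I would invoke the defining property of $\mathfrak{M}$, which I expect to be precisely that $\mathfrak{M}$ consists of Dixmier traces whose generating state is additionally dilation-invariant, i.e.\ $\omega\circ P_u = \omega$ for every $u>0$. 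For such $\omega$ one has $\omega^u = \omega$ by definition, and consequently ${\rm Tr}_{\omega^u} = {\rm Tr}_\omega$ as functionals on $\mathcal{L}_{1,\infty}$. Substituting $u = p_1, p_2, p_1+p_2$ into the conclusion of Theorem \ref{connes lemma} then produces exactly the statement of Theorem \ref{dixmier fubini}.

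The genuine analytic content is already contained in Theorem \ref{connes lemma}; the deduction above is essentially bookkeeping. The only potential obstacle is to confirm the identification of $\mathfrak{M}$ with the class of dilation-invariant Dixmier traces. I expect this to fall out immediately from the definition given in the preliminaries section, since the natural way to single out a subclass of Dixmier traces compatible with arbitrary rescaling of the heat-kernel parameter in Condition \ref{dixmier regular} is precisely to require $P_u$-invariance of the generating state. Should $\mathfrak{M}$ be defined slightly differently, the only thing to verify would be the identity ${\rm Tr}_{\omega^u} = {\rm Tr}_\omega$ on $\mathcal{L}_{1,\infty}$ for $\omega$ ranging over $\mathfrak{M}$ and $u \in \{p_1,p_2,p_1+p_2\}$; this is a purely functional-analytic statement about the interplay of $M$ and $P_u$ on the generating states, independent of any spectral-triple data.
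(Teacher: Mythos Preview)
Your proposal is correct and matches the paper's proof essentially verbatim: the paper's argument is the two-line observation that for ${\rm Tr}_\omega\in\mathfrak{M}$ one has $\omega=\omega\circ P_u$ for all $u>0$, hence ${\rm Tr}_{\omega^{p_1}}={\rm Tr}_{\omega^{p_2}}={\rm Tr}_{\omega^{p_1+p_2}}={\rm Tr}_\omega$, and then Theorem~\ref{connes lemma} gives the result. One small correction: it is not true that \emph{every} Dixmier trace is generated by an $M$-invariant state (the paper's general Dixmier traces come from ultrafilters via \eqref{dixtrdef}); rather, $M$-invariance is built into the definition of $\mathfrak{M}$ alongside $P_u$-invariance, so the hypothesis of Theorem~\ref{connes lemma} is satisfied for that reason, not automatically.
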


It is important to note the difference between Conditions \ref{former regular} and \ref{dixmier regular} and the difference between the assertions of Theorems \ref{fubini} and \ref{dixmier fubini}. Indeed, Theorem \ref{fubini} holds for arbitrary traces on $\mathcal{L}_{1,\infty},$ while Theorem \ref{dixmier fubini} holds for a certain subclass $\mathfrak{M}$ in the class of Dixmier traces. Theorem \ref{dixmier fubini} does not hold for some Dixmier traces outside of the subclass $\mathfrak{M}.$

Condition~\ref{former regular} is stronger than universal measurability. Our second result complements Theorem \ref{fubini} by stating that universal measurability is not sufficient for a Fubini theorem. In fact, the counterexample involves the nicest possible situation where the noncommutative integral is a normal functional on the algebra $\mathcal{A}.$

\begin{definition} Suppose that $(\mathcal{A},H,D)$ admits a noncommutative integral. We say that the noncommutative integral is normal if the mapping
$$T\to\fint(T),\quad T\in\mathcal{A},$$
is continuous in the weak operator topology.
\end{definition}

\begin{theorem}\label{hard example} There exists a $(1,\infty)-$summable spectral triple $(\mathcal{A},H,D)$ such that
\begin{enumerate}[{\rm (a)}]
\item\label{harda} $D$ has simple spectrum $\mathbb{Z}_+.$
\item\label{hardb} $\mathcal{A}$ is generated by a unitary operator $U$ and is finite dimensional.
\item\label{hardc} $(\mathcal{A},H,D)$ admits a normal noncommutative integral.
\item\label{hardd} $(\mathcal{A}\otimes\mathcal{A},H\otimes H,(D^2\otimes 1+1\otimes D^2)^{1/2})$ admits a normal noncommutative integral.
\item\label{harde} For every normalised trace $\varphi$ on $\mathcal{L}_{1,\infty},$ we have
$$\varphi((U\otimes U^{-1})(1+D^2\otimes 1+1\otimes D^2)^{-1})\neq0,\quad \varphi(U(1+D^2)^{-\frac{1}{2}})=\varphi(U^{-1}(1+D^2)^{-\frac{1}{2}})=0.$$
\end{enumerate}
\end{theorem}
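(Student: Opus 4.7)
The plan is a purely diagonal construction. Take $H=\ell_2(\mathbb{Z}_+)$ with orthonormal basis $\{e_k\}_{k\geq 0}$, set $D=\sum_{k\geq 0}k\,|e_k\rangle\langle e_k|$, and $U=\sum_{k\geq 0} u_k|e_k\rangle\langle e_k|$ with signs constant on dyadic blocks, $u_k=(-1)^n$ for $k\in[2^n,2^{n+1})$. Put $\mathcal{A}=\mathrm{span}(1,U)$. Then (a) and (b) hold at once: $D$ has simple spectrum $\mathbb{Z}_+$; $U$ is a self-adjoint unitary with $U^2=1$, so $\mathcal{A}$ is a $2$-dimensional $*$-algebra generated by $U$. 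Joint diagonality of $D,U$ gives $[D,U]=0$ and $(\mathcal{A},H,D)$ is a $(1,\infty)$-summable spectral triple.

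For (c), the only nontrivial point is universal measurability of $U(1+D^2)^{-1/2}$. The dyadic estimate
$$\sum_{k\in[2^n,2^{n+1})}\frac{1}{\sqrt{1+k^2}}=\log 2+O(2^{-n})$$
combined with the alternating signs makes $\sum_{k=0}^N u_k/\sqrt{1+k^2}$ bounded, hence $U(1+D^2)^{-1/2}$ is universally measurable with vanishing integral; normality is automatic on finite-dimensional $\mathcal{A}$. This already yields the first half of (e).

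The heart of the argument is (d) together with the nonvanishing in (e). Put $B=1+D^2\otimes 1+1\otimes D^2$. Using $(1+k^2+l^2)^{-1}=\int_0^\infty e^{-t(1+k^2+l^2)}\,dt$ and the generating functions $F_M(t)=\sum_{k\leq\sqrt M}u_ke^{-tk^2}$, $G_M(t)=\sum_{k\leq\sqrt M}e^{-tk^2}$, one recasts the ordered partial sum $\sum_{j\leq N}\lambda(j,TB^{-1})$ (with $N\sim\pi M/4$) as a Laplace integral. A dyadic-block computation shows that $\sqrt{t}\,F(t)$ is asymptotically a bounded function of $\log_4(1/t)$ whose sign alternates on successive unit intervals, while $\sqrt{t}\,G(t)\to\tfrac12\sqrt\pi$. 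Therefore $tF(t)^2$ is a strictly positive $1$-periodic function of $\log_4(1/t)$ with positive average $c^2>0$, whereas $tF(t)G(t)$ changes sign on every dyadic interval. Averaging yields
$$\int_0^\infty e^{-t}F_M(t)^2\,dt=c^2\log M+O(1),\qquad \int_0^\infty e^{-t}F_M(t)G_M(t)\,dt=O(1),$$
the first from $1$-periodic averaging, the second from cancellation of alternating dyadic contributions. Passing back to ordered partial sums, and controlling the $O(1)$ difference between the disc cutoff $\{k^2+l^2\leq M\}$ and the square $[0,\sqrt M]^2$ by an annulus estimate, the four operators $B^{-1}$, $(U\otimes 1)B^{-1}$, $(1\otimes U)B^{-1}$, $(U\otimes U)B^{-1}$ are each universally measurable with integrals $\pi/4$, $0$, $0$, $c^2\neq 0$. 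This gives (d), and (e) follows because $U=U^{-1}$ makes $U\otimes U^{-1}=U\otimes U$.

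The main obstacle is the careful bookkeeping of the $O(1)$ remainders: establishing the sharp dyadic expansion of $F$, controlling the transition region $k^2+l^2\sim M$ between the disc and square cutoffs, and verifying that the average of the $1$-periodic density $tF(t)^2$ is strictly positive, which reduces to checking that the effective dyadic profile does not vanish identically at the origin.
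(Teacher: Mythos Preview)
Your construction is exactly the special case $\theta=\pi$ of the paper's family $U_\theta=\mathrm{diag}(x_\theta)$ with $x_\theta(k)=e^{in\theta}$ on $[2^n,2^{n+1})$, and the overall strategy (reduce to partial-sum asymptotics of $\sum u_k u_l/(1+k^2+l^2)$ via a Kalton-type lemma, then invoke the spectral characterisation of universal measurability) is the same. Your technical route is different: you express the square-box partial sum as $\int_0^\infty e^{-t}F_M(t)^2\,dt$ and analyse the asymptotic anti-periodicity of $\sqrt t\,F(t)$ in $\log_4(1/t)$, whereas the paper computes the dyadic block sums directly (its Lemmas on $\Xi(|m_1-m_2|)$) and obtains the closed form $F(\theta)=\sum_{m\in\mathbb Z}\Xi(|m|)e^{im\theta}$ for the leading coefficient. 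Both routes are valid and lead to the same constant: your $c^2$ equals $\frac{1}{2\log 2}F(\pi)$.

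The genuine gap is the step you yourself flag as ``the main obstacle'': proving $c^2>0$. With base $2$ and $\theta=\pi$ this constant is extremely small (numerically $F(\pi)\approx 0.002$), and ``checking that the effective dyadic profile does not vanish identically'' is not a triviality here. The paper circumvents this entirely by \emph{not} fixing $\theta$: since $F$ is continuous and $F\not\equiv 0$ (its Fourier coefficients $\Xi(|m|)$ are nonzero), one simply picks a rational $\theta$ with $F(\theta)\neq 0$, at the cost of $\mathcal A$ being finite-dimensional but not necessarily $2$-dimensional. The paper's Remark after the proof makes exactly your point: to force a $2$-dimensional algebra ($\theta=\pi$) one should replace the base $2$ by $2^7$, because then the dominant term $\Xi_0(0)>7\log 2-1$ swamps the alternating tail and $F(\pi)>0$ is immediate, as in the proof of Theorem~\ref{second example}. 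With your base-$2$ choice the nonvanishing can still be proved (for instance via Mellin transform: the limiting profile has Fourier coefficients proportional to $\Gamma(\tfrac12+i(2j{+}1)\pi/(2\log 2))\neq 0$), but it requires a real argument that your sketch does not supply.
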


\begin{corollary}\label{ex for pos} In the setting of Theorem \ref{hard example}, there exists a positive element $T\in\mathcal{A}$ such that
$$\varphi((T\otimes T)(1+D^2\otimes 1+1\otimes D^2)^{-1})\neq\frac{\pi}{4}(\varphi(T(1+D^2)^{-\frac{1}{2}}))^2$$
for every normalised trace $\varphi$ on $\mathcal{L}_{1,\infty}.$
\end{corollary}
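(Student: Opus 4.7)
The plan is to take
$$T_\theta := (I+e^{i\theta}U)^*(I+e^{i\theta}U) = 2I + e^{i\theta}U + e^{-i\theta}U^{-1},$$
which is manifestly positive and lies in $\mathcal{A}$, and to prove that the alleged identity must fail for at least one $\theta\in[0,2\pi)$. Since $\varphi(U(1+D^2)^{-1/2})=\varphi(U^{-1}(1+D^2)^{-1/2})=0$ by Theorem \ref{hard example}(e), one immediately obtains $\varphi(T_\theta(1+D^2)^{-1/2})=2\varphi((1+D^2)^{-1/2})=2$, so the candidate right-hand side $\frac{\pi}{4}\varphi(T_\theta(1+D^2)^{-1/2})^2$ equals $\pi$ for every $\theta$ and every normalised trace $\varphi$.

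Next I would expand $T_\theta\otimes T_\theta$ as the sum of the nine monomials $U^m\otimes U^n$, $m,n\in\{-1,0,1\}$, with coefficients in $\{1,2,4\}\cdot e^{ij\theta}$. All monomials apart from $I\otimes I$, $U\otimes U^{-1}$, and $U^{-1}\otimes U$ carry a nontrivial factor $e^{\pm i\theta}$ or $e^{\pm 2i\theta}$ and therefore vanish on averaging. For the three surviving terms two facts are needed. First, because both copies of $D$ coincide the flip $\sigma$ on $H\otimes H$ commutes with $R:=(1+D^2\otimes 1+1\otimes D^2)^{-1}$, so the trace property together with $\sigma^2=1$ gives
$$\varphi((U^{-1}\otimes U)R)=\varphi\bigl(\sigma(U\otimes U^{-1})\sigma^{-1}R\bigr)=\varphi((U\otimes U^{-1})R),$$
and this common value is nonzero by hypothesis (e). Second, $\varphi(R)=\pi/4$, which I would obtain from the Weyl counting $\#\{(k,\ell)\in\mathbb{Z}_+^2:k^2+\ell^2\le N^2\}\sim\pi N^2/4$ combined with the universal measurability of $R$ provided by part (d). Substituting these in yields
$$\frac{1}{2\pi}\int_0^{2\pi}\Bigl[\varphi\bigl((T_\theta\otimes T_\theta)R\bigr)-\pi\Bigr]\,d\theta = 4\cdot\frac{\pi}{4}+2\varphi((U\otimes U^{-1})R)-\pi = 2\varphi((U\otimes U^{-1})R)\neq 0,$$
so the Fubini discrepancy does not vanish identically in $\theta$.

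Any $\theta_0$ at which the integrand above is nonzero supplies the required positive element $T:=T_{\theta_0}$, and this single $T$ works simultaneously for every normalised trace $\varphi$ because all the quantities $\varphi(U^m(1+D^2)^{-1/2})$ and $\varphi((U^m\otimes U^n)R)$ are $\varphi$-independent by parts (c) and (d). The main obstacle I anticipate is the clean identification of the constant $\pi/4=\varphi(R)$: without it one is only able to conclude that a combination $4\varphi(R)+2\varphi((U\otimes U^{-1})R)-\pi$ is nonzero, and an accidental cancellation cannot be ruled out. This value must be pinned down by the direct lattice-point or heat-kernel computation for $D\otimes 1+1\otimes D$; the flip-symmetry step giving $\varphi((U^{-1}\otimes U)R)=\varphi((U\otimes U^{-1})R)$ is forced by the $\mathbb{Z}/2$-symmetry of $R$ and causes no real difficulty.
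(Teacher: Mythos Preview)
Your argument is correct and reaches the same conclusion as the paper, but by a slightly different route. The paper takes exactly your $T_0 = 2I + U + U^{-1}$ (i.e.\ your $T_\theta$ at $\theta=0$) and then invokes formula \eqref{pneqq} from the proof of Theorem~\ref{hard example} --- namely $\varphi((U^p\otimes U^q)R)=0$ whenever $(p+q)\theta\notin 2\pi\mathbb{Z}$ --- to kill all seven off-diagonal monomials directly, so that no averaging is needed and the explicit element $T_0$ is exhibited. Your averaging trick bypasses \eqref{pneqq} entirely: the $e^{\pm i\theta}$ and $e^{\pm 2i\theta}$ terms integrate to zero regardless of the actual values of $\varphi((U^p\otimes U^q)R)$, and universal measurability (parts (c) and (d)) then lets you fix one $\theta_0$ that works for every $\varphi$. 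The price is that $\theta_0$ is not identified; the gain is that you need only three inputs --- $\varphi(R)=\pi/4$, the flip symmetry, and part (e) --- and never the finer vanishing information in \eqref{pneqq}. For the constant $\pi/4$ you were concerned about, the paper simply cites its Lemma~\ref{nte3} (the Gauss circle count in $\mathbb{Z}_+^2$), which is precisely the lattice-point computation you outline; there is no accidental cancellation to worry about.
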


Our second counterexample shows that volume in noncommutative geometry is not necessarily well behaved under the product operation on spectral triples, even with the strong condition of universal measurability. 

\begin{theorem}\label{second example} There exists an operator $D$ such that
\begin{enumerate}[{\rm (a)}]
\item $(1+D^2)^{-1/2}\in\mathcal{L}_{1,\infty}$ is universally measurable.
\item $(1+D^2\otimes 1+D^2\otimes 1)^{-1}\in\mathcal{L}_{1,\infty}$ is universally measurable.
\item For every normalised trace $\varphi$ on $\mathcal{L}_{1,\infty},$ we have
$$\varphi((1+D^2\otimes 1+1\otimes D^2)^{-1})>\frac{\pi}{4}(\varphi((1+D^2)^{-\frac{1}{2}}))^2.$$
\end{enumerate}
\end{theorem}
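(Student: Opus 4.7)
I would take $D$ to be a positive self-adjoint operator with discrete simple spectrum $\{\lambda_n\}_{n\ge 1}$ realising a log-periodic density. Specifically, for a fixed smooth periodic $g$ of mean zero and amplitude $\alpha<1$, I place the eigenvalues so that the counting function $N(R)=\#\{n:\lambda_n\le R\}$ satisfies $|N(R)-\int_0^R(1+g(\log r))\,dr|\le 1$. By the spectral characterisation in the definition of universal measurability, conditions (a) and (b) reduce to estimates of the form
\[
\sum_{\lambda_n\le R}(1+\lambda_n^2)^{-1/2}=c_1\log N(R)+O(1),\qquad
\sum_{\lambda_i^2+\lambda_j^2\le R^2}(1+\lambda_i^2+\lambda_j^2)^{-1}=c_2\log N_2(R)+O(1),
\]
and (c) becomes the strict inequality $c_2>\pi c_1^2/4$.

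For (a), Abel summation reduces the single sum to $\int_0^R(1+g(\log r))/\sqrt{1+r^2}\,dr=\log R+O(1)$, since $\int_0^{\log R}g(u)\,du=O(1)$ by periodicity and zero mean; combined with $\log N(R)=\log R+O(1)$ this gives $c_1=1$. For (b), in polar coordinates the continuous analogue of the double sum is $\int_0^R r(1+r^2)^{-1}A(\log r)\,dr$ with $A(u)=\int_0^{\pi/2}(1+g(u+\log\cos\theta))(1+g(u+\log\sin\theta))\,d\theta$. Expanding the product, the constant and linear-in-$g$ pieces contribute $(\pi/2)\log R+O(1)$, and the quadratic piece contributes $C_g\log R+O(1)$, where $C_g$ is the log-mean of $\int_0^{\pi/2}g(u+\log\cos\theta)g(u+\log\sin\theta)\,d\theta$. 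A direct Fourier computation for $g(u)=\alpha\cos(2\pi u/T)$ yields $C_g=(\alpha^2/2)\int_0^{\pi/2}\cos((2\pi/T)\log\cot\theta)\,d\theta$, which converges to $\alpha^2\pi/4>0$ as $T\to\infty$ and is therefore strictly positive for all sufficiently large $T$. Since $N_2(R)=\pi R^2/4+O(R^2)$ gives $\log N_2(R)=2\log R+O(1)$, I then obtain $c_2=\pi/4+C_g/2>\pi/4=\pi c_1^2/4$, establishing (c).

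The main obstacle is the $O(1)$ control in these two estimates, since the continuous integrals carry natural errors of $O(R)$ and $O(R^2)$ that must collapse to $O(1)$ on the logarithmic scale. The Fourier decomposition of $g$ handles this systematically: every odd-order term in $g$ has an oscillatory primitive against the measure $dr/r$ and therefore contributes $O(1)$, while only the quadratic diagonal term produces a genuine log-linear excess $C_g$. A secondary technical step is passing from the continuous integral over the disc to the discrete sum over lattice points $(\lambda_i,\lambda_j)$; this introduces boundary errors that are controlled by a standard Euler--Maclaurin estimate, using that $(1+x^2+y^2)^{-1}$ is smooth with gradient of order $R^{-3}$ on the sphere of radius $R$ and that the rounding error in $N$ is bounded by one per unit length.
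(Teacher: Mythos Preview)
Your strategy is sound and is, in spirit, the same mechanism the paper exploits: a log-periodic fluctuation in the spectral density of $D$ so that the single-variable partial sums average out to $c_1\log n+O(1)$, while in two variables the quadratic self-interaction of the fluctuation survives the angular average and produces a strictly positive excess over $\tfrac{\pi}{4}c_1^2$. The paper realises this with a \emph{piecewise} construction --- it sets $d(k)=k$ on even dyadic blocks $[2^{7n},2^{7(n+1)})$ and $d(k)=2^7k$ on odd ones --- and then computes the block sums explicitly via the integrals $\Xi_0(m)=\int_1^{2^7}\!\int_{2^{7m}}^{2^{7(m+1)}}\frac{dt\,ds}{t^2+s^2}$, obtaining $c_1=\tfrac12(1+2^{-7})$ and $c_2=\tfrac{1}{7\log 2}\big(\tfrac{1+2^{-7}}{2}\big)^2\sum_{m}\Xi_0(2m)$, and closes with the elementary estimate $\Xi_0(0)>7\log 2-1>\tfrac{7\pi}{4}\log 2$. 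Your smooth version with $g(u)=\alpha\cos(2\pi u/T)$ and the Fourier computation of $C_g$ is a cleaner conceptual explanation of \emph{why} the excess appears (the diagonal Fourier mode of the product density has nonzero angular mean), and the limit $C_g\to\alpha^2\pi/4$ as $T\to\infty$ makes positivity transparent; the paper's dyadic arithmetic, on the other hand, avoids any asymptotic-in-$T$ argument and gives a single explicit $D$ with verifiable constants.

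Two technical points you should tighten. First, since both operators here are positive, the passage from $\sum_{m\le n}\mu(m,A)$ to the radial lattice sum is elementary (largest eigenvalues correspond to smallest $\lambda_i^2+\lambda_j^2$), so you do not need the full Kalton-type lemma the paper uses for its non-positive examples --- but you do need $N_2(R)\asymp R^2$ (not merely ``$=\pi R^2/4+O(R^2)$'', which says nothing) to get $\log N_2(R)=2\log R+O(1)$; this follows from $1-\alpha\le 1+g\le 1+\alpha$. Second, your $O(1)$ discretisation really does require the two successive integrations by parts you allude to: writing $N(r)=\tilde N(r)+E(r)$ with $|E|\le 1$, the error in each variable is controlled by $\int_0^\infty|\partial_y f(x,y)|\,dy=(1+x^2)^{-1}$, whose sum over the first variable is $O(1)$ because $\lambda_n\asymp n$. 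The boundary-of-disc contribution in the second integration by parts (from the moving upper limit $\sqrt{R^2-x^2}$) also needs a line; it is harmless because $f$ is $O(R^{-2})$ there and the boundary has length $O(R)$.
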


Our final counterexample (proved in Appendix \ref{easy}) shows that it does not suffice to impose Condition \ref{former regular} only on one spectral triple. It also shows that the assertion of Theorem \ref{dixmier fubini} fails for some Dixmier trace (outside of the class $\mathfrak{M}$).

\begin{theorem} There exist spectral triples $(\mathcal{A}_1,l_2,D)$ and $(\mathbb{C},l_2,D)$ such that
\begin{enumerate}[{\rm (a)}]
\item $D$ has simple spectrum $\mathbb{Z}_+.$ In particular, the spectral triple $(\mathbb{C},l_2,D)$ satisfies the Condition \ref{former regular}.
\item There exists an operator $T_1\in\mathcal{A}_1$ and a (Dixmier) trace $\varphi$ on $\mathcal{L}_{1,\infty}$ such that
$$\varphi((T_1\otimes 1)(1+D^2\otimes 1+1\otimes D^2)^{-1})\neq\frac{\pi}{4}\varphi(T_1(1+D^2)^{-1/2}).$$
\end{enumerate}
\end{theorem}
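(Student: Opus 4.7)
My plan is to build a diagonal counterexample on $\ell_2(\mathbb{Z}_+)$. Take $D = {\rm diag}(\{k\}_{k\geq 0})$, so that ${\rm spec}(D) = \mathbb{Z}_+$ is simple, and Poisson summation gives
$$
{\rm Tr}(e^{-t^2D^2}) = \sum_{k=0}^\infty e^{-t^2k^2} = \frac{\sqrt{\pi}}{2t} + \frac12 + O(e^{-c/t^2}),\quad t\to 0^+,
$$
so Condition \ref{former regular} holds for the scalar triple $(\mathbb{C},\ell_2,D)$ with $p=1$. For $\mathcal{A}_1$ I take the $*$-algebra generated by a single bounded positive diagonal operator $T_1 = {\rm diag}(\{a_k\})$; because $T_1$ commutes with $D$, the commutator axiom is trivial. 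The coefficients are designed to oscillate on the $\log\log$ scale; concretely, $a_k = 1 + \tfrac12\sin(\log\log k)$ for $k$ sufficiently large and constant otherwise.

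I would next compute the logarithmic averages on the two sides of the desired Fubini identity. On the single-factor side, the eigenvalues of $T_1(1+D^2)^{-1/2}$ are $a_k/\sqrt{1+k^2}$, and an integration by parts under $u = \log x$ yields
$$
\sum_{k=0}^N \frac{a_k}{\sqrt{1+k^2}} = \log N + \frac14\bigl(\sin - \cos\bigr)(\log\log N)\cdot\log N + O(1),
$$
so $f(N) := \sigma^{(1)}_N/\log N$ is bounded and oscillates on the $\log\log N$ scale. On the tensor side, summing the eigenvalues $a_j/(1+j^2+k^2)$ over the threshold region $\{a_j/(1+j^2+k^2)\geq t\}$ by performing the $k$-sum first and using $\sum_k 1/(1+j^2+k^2)\approx \pi/(2\sqrt{1+j^2})$ gives $\sigma^{(\otimes)}(t)\approx \tfrac{\pi}{2}\sum_{j\leq \sqrt{1/t}} a_j/j$, while a quarter-disc count gives $M(t)\approx \pi/(4t)$. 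Re-parametrising by $M$ shows $\sigma^{(\otimes)}_M/\log M \sim \tfrac{\pi}{4}\,f(\sqrt M)$.

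Applied to any Dixmier trace ${\rm Tr}_\omega$, these asymptotics yield
$$
\varphi_\otimes - \tfrac{\pi}{4}\varphi_1 = \tfrac{\pi}{4}\bigl(\omega(f(\sqrt{\cdot})) - \omega(f(\cdot))\bigr).
$$
The substitution $M\mapsto\sqrt M$ is the translation $w\mapsto w - \log 2$ on the variable $w = \log\log M$, and a Cesaro-invariant state on $L_\infty$ need not be invariant under translations in this twice-logarithmic variable. Since $f$ contains the non-trivial oscillating term $\tfrac14(\sin w - \cos w)$, the difference $f(M) - f(\sqrt M)$ is a non-zero linear combination of $\sin w$ and $\cos w$. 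A Hahn--Banach extension of a Banach limit along a sparse subsequence of $w_n = n\log 2$ then produces a Cesaro-invariant state $\omega$ on which this combination has non-zero mass, giving the strict inequality. Such an $\omega$ lies outside the subclass $\mathfrak{M}$, consistent with Theorem \ref{dixmier fubini}.

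The main technical obstacle is the eigenvalue-sorting step on the tensor side: because $a_j$ oscillates, the natural index enumeration of the eigenvalues $a_j/(1+j^2+k^2)$ differs from their magnitude ordering. Boundedness of $a_j$ and smoothness of the denominator keep the discrepancy between sorted and unsorted partial sums at size $O(1)$ in logarithmic mean, but verifying this (and controlling the boundary region $j\sim \sqrt{1/t}$ where the arctangent cuts in) requires a careful discrete-to-integral comparison, needed to push the asymptotic $\sigma^{(\otimes)}_M/\log M = \tfrac{\pi}{4}f(\sqrt M) + o(1)$ to the precision required for the Dixmier-trace argument.
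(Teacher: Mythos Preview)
Your approach is correct and reaches the same conclusion as the paper, but via a different construction. The paper takes $T_1={\rm diag}(x)$ with $x=\chi_{\cup_m[n_{2m},n_{2m+1})}$ where $\log(n_m)=o(\log(n_{m+1}))$, a $\{0,1\}$-valued sequence with gaps growing super-geometrically on the $\log$ scale. It then argues by contradiction: if the Fubini identity held for every Dixmier trace, Lemmas~\ref{kalton} and~\ref{double kalton} would force
\[
\frac{\pi}{4}\sum_{k=0}^n\frac{x(k)}{k+1}-\sum_{k=0}^{\lfloor n^{1/2}\rfloor}\frac{x(k)}{k+1}\,\tan^{-1}\!\Bigl(\bigl(\tfrac{n}{k^2+1}\bigr)^{1/2}\Bigr)=o(\log n),
\]
and a direct evaluation at $n=n_{2m}^2$ shows the left side is $\geq\frac{\pi}{8}\log n-o(\log n)$. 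No explicit Dixmier trace is ever produced; one simply infers its existence from the failure of the $o(\log n)$ bound.

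Your route replaces the characteristic function by the smooth symbol $a_k=1+\tfrac12\sin(\log\log k)$ and computes both Ces\`aro means explicitly, isolating the mismatch as $f(\sqrt M)-f(M)=C\cos(\log\log M+\phi)+o(1)$ with $C\neq 0$. This makes the mechanism (the shift $w\mapsto w-\log 2$ on the twice-logarithmic scale) completely transparent, at the cost of an integration-by-parts computation and a somewhat longer argument. The sorting obstacle you flag is exactly what the paper's Lemma~\ref{double kalton} handles, so you can simply invoke it (after rescaling by $\|a\|_\infty$) rather than redo the discrete-to-integral comparison. Your final step is over-engineered: with the paper's ultrafilter definition \eqref{dixtrdef} of ${\rm Tr}_\omega$ (which already yields an additive trace on $\mathcal{L}_{1,\infty}$ for \emph{every} free ultrafilter, by the cited result of~\cite{SS}), no Ces\`aro-invariance or Hahn--Banach construction is needed---just pick any ultrafilter refining the infinite set $\{n:\cos(\log\log n+\phi)>\tfrac12\}$.
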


We are grateful to Professor Connes for his kind explanation of noncommutative Fubini formula. We are also grateful to A. Carey, Y. Kuznetsova, S. Lord and A. Rennie for useful comments on the earlier versions of this manuscript. The authors would also like to mention that the idea of the proof of Lemma \ref{kalton} was conveyed to them by the late N. Kalton.

\section{Preliminaries}\label{prel section}

The standard trace on $\mathcal{L}(H)$ is denoted by ${\rm Tr}.$  Fix an orthonormal basis in $H$ (the particular choice of basis is inessential). We identify the  algebra $l_{\infty}$ of bounded sequences with the subalgebra of all diagonal operators with respect to the chosen basis. We set $l_{1,\infty}=\mathcal{L}_{1,\infty}\cap l_{\infty}.$ For a given sequence $x\in l_{\infty},$ we denote the corresponding diagonal operator by ${\rm diag}(x).$

\begin{definition}\label{trace def} A trace on $\mathcal{L}_{1,\infty}$ is a unitarily invariant linear functional $\varphi:\mathcal{L}_{1,\infty}\to\mathbb{C}$.
\end{definition}

Traces on $\mathcal{L}_{1,\infty}$ satisfying the condition
$$\varphi(TS)=\varphi(ST),\quad T\in\mathcal{L}_{1,\infty}, S\in\mathcal{L}(H).$$
The latter may be reinterpreted as the vanishing of the linear functional $\varphi$ on the commutator subspace
$$[\mathcal{L}_{1,\infty},\mathcal{L}(H)]={\rm span}\{ST-TS,\quad T\in\mathcal{L}_{1,\infty}, S\in\mathcal{L}(H)\}.$$

An example of a trace on $\mathcal{L}_{1,\infty}$ is a Dixmier trace that we now explain (we use the definition from \cite{SS}, which, according to Theorem 17 in \cite{SS}, produces exactly the same class of traces on $\mathcal{L}_{1,\infty}$ as the one in \cite{Connes}). Namely, for every ultrafilter $\omega,$ the functional ${\rm Tr}_{\omega}$ defined on the positive cone of $\mathcal{L}_{1,\infty}$ by the formula
\begin{equation}\label{dixtrdef}
{\rm Tr}_{\omega}(A)=\lim_{n\to\omega}\frac1{\log(n+2)}\sum_{k=0}^n\mu(k,A),\quad 0\leq A\in\mathcal{L}_{1,\infty},
\end{equation}
is additive and, therefore, extends to a positive unitarily invariant linear functional on $\mathcal{L}_{1,\infty}$ called a Dixmier trace.

In order to properly state Theorem \ref{dixmier fubini}, we need a smaller subclass $\mathfrak{M}$ of Dixmier traces. Let $\omega$ be a state on the algebra $L_{\infty}(0,\infty)$ which satisfies the condition $\omega=\omega\circ M$ (see p.35 in \cite{BF}). Here, the linear operator $M:L_{\infty}(0,\infty)\to L_{\infty}(0,\infty)$ is given by the formula
$$(Mx)(t)=\frac1{\log(t)}\int_1^tx(s)\frac{ds}{s},\quad t>0.$$
The functional ${\rm Tr}_{\omega}$ is defined on the positive cone of $\mathcal{L}_{1,\infty}$ by the formula
$${\rm Tr}_{\omega}(A)=\omega\Big(t\to \frac1{\log(1+t)}\int_0^t\mu(s,A)ds\Big),\quad 0\leq A\in\mathcal{L}_{1,\infty}.$$
This functional is additive and, therefore, extends to a positive unitarily invariant linear functional on $\mathcal{L}_{1,\infty}$ (see e.g. \cite{BF}).

Let the group $(\mathbb{R}_+,\cdot)$ act on $L_{\infty}(0,\infty)$ by the formula $u\to P_u,$ $(P_ux)(t)=x(t^u),$ $u,t>0.$ Note that $M\circ P_u=P_u\circ M$ (see a similar formula (3) in \cite{SUZ1}). In particular, ${\rm Tr}_{\omega^u}={\rm Tr}_{\omega\circ P_u}$ is also a positive unitarily invariant linear functional on $\mathcal{L}_{1,\infty}.$ We set
$$\mathfrak{M}=\{{\rm Tr}_{\omega}:\ \omega=\omega\circ M,\quad \omega=\omega\circ P_u,\ u>0\}.$$
It is important to note that $\omega$ in this paragraph can never be an ultrafilter. However, ${\rm Tr}_{\omega}$ is still a Dixmier trace according to the main result of \cite{SS}.

The following assertion is Theorem 3 in \cite{BF}.

\begin{theorem}\label{bf main theorem} Let $\omega=\omega\circ M$ be a state on the algebra $L_{\infty}(0,\infty).$ If the triple $(\mathcal{A},H,D)$ is $(p,\infty)-$summable, then
$$\Gamma(1+\frac{p}{2}){\rm Tr}_{\omega\circ P_p}(T(1+D^2)^{-\frac{p}{2}})=\omega\Big(t\to t^{-p}{\rm Tr}(Te^{-t^{-2}D^2})\Big),\quad T\in\mathcal{A}.$$
\end{theorem}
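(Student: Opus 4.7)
The plan is to follow the heat-kernel/Laplace-transform strategy that is standard for Dixmier-type identities. I would begin from the Laplace integral
$$(1+D^2)^{-p/2}=\frac{1}{\Gamma(p/2)}\int_0^{\infty}u^{p/2-1}e^{-u(1+D^2)}\,du,$$
and use the multiplicative change of variables $u=t^{-2}$ to rewrite this as
$$T(1+D^2)^{-p/2}=\frac{2}{\Gamma(p/2)}\int_0^{\infty}t^{-p}e^{-t^{-2}}\,Te^{-t^{-2}D^2}\,\frac{dt}{t}.$$
Since $\Gamma(1+p/2)=\tfrac{p}{2}\Gamma(p/2)$, this identifies $t\to t^{-p}{\rm Tr}(Te^{-t^{-2}D^2})$ as the pointwise density whose $\omega$-average should produce $\Gamma(1+p/2)$ times the trace of the operator on the left. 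The factor $e^{-t^{-2}}$ is harmless because it tends to $1$ in the only regime ($t\to\infty$) that matters for the state $\omega$.

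Next I would invoke the general heat-kernel representation of the Dixmier trace, which for a positive $A\in\mathcal{L}_{1,\infty}$ reads
$${\rm Tr}_{\omega}(A)=\omega\Big(t\to\frac{1}{\log(1+t)}\int_0^{t}\mu(s,A)\,ds\Big).$$
Applied to $A=T(1+D^2)^{-p/2}$ (reduced to positive $T$ via polar decomposition and linearity), the dilated state $\omega\circ P_p$ turns this into $\omega(t\to\frac{1}{\log(1+t^p)}\int_0^{t^p}\mu(s,A)\,ds)$. The heart of the argument is a Karamata-type Tauberian comparison showing that, modulo a function annihilated by every $M$-invariant state, this Cesaro expression equals $\Gamma(1+p/2)^{-1}\,t^{-p}{\rm Tr}(Te^{-t^{-2}D^2})$. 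The dilation $P_p$ absorbs the mismatch between the logarithmic scale of the Dixmier trace (counting eigenvalues up to index $n$) and the polynomial scale of the heat kernel (where the natural parameter is $t^p$); the condition $\omega=\omega\circ M$ is then used to discard the logarithmic Cesaro average in front of $\int_0^{t^p}$.

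The main obstacle I anticipate is the non-commutativity of $T$ and $D$. The singular values $\mu(s,T(1+D^2)^{-p/2})$ are not determined by the spectral data of $D$ alone, so they cannot be read off directly from ${\rm Tr}(Te^{-t^{-2}D^2})$, which is a sum of diagonal matrix entries of $T$ against the heat kernel of $D^2$. The resolution is that both quantities are computed modulo $O(1)$ errors that drop out after logarithmic normalisation and after application of $\omega=\omega\circ M$. Concretely, I would split $T$ into its diagonal and off-diagonal parts relative to the spectral decomposition of $D$, use the H\"older inequality in $\mathcal{L}_{1,\infty}$ to bound the off-diagonal contribution, and reduce the problem to a commutative Karamata-type computation linking $\sum_{k=0}^{n}\mu(k,A)$ to the heat trace of the corresponding commutative spectral multiplier. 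A secondary routine point is the justification of Fubini in the Laplace representation, which uses $(p,\infty)$-summability of $(1+D^2)^{-p/2}$ together with the exponential decay of $e^{-t^{-2}}$ near $t=0$.
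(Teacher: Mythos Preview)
The paper does not prove this statement at all: it is quoted verbatim as ``Theorem~3 in \cite{BF}'' and used as a black box in the proofs of Theorems~\ref{connes lemma} and~\ref{dixmier fubini}. So there is no ``paper's own proof'' to compare your proposal against.

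As for the proposal itself: the Laplace representation and the observation that the factor $e^{-t^{-2}}$ is asymptotically $1$ are fine, and your identification of the role of $P_p$ and of the $M$-invariance of $\omega$ is correct in spirit. The weak point is your handling of the non-commutativity of $T$ and $D$. Splitting $T$ into diagonal and off-diagonal parts in the eigenbasis of $D$ and then ``using the H\"older inequality in $\mathcal{L}_{1,\infty}$ to bound the off-diagonal contribution'' does not obviously work: the off-diagonal part of $T(1+D^2)^{-p/2}$ need not lie in any ideal smaller than $\mathcal{L}_{1,\infty}$, so there is no reason its Dixmier trace should vanish, and H\"older alone gives no such conclusion. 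The route actually taken in \cite{BF} (and in related treatments such as \cite{urojai} and Chapter~8 of \cite{LSZ}) avoids this decomposition entirely: one uses the trace property to write ${\rm Tr}_{\omega}(TV)={\rm Tr}_{\omega}(V^{1/2}TV^{1/2})$ with $V=(1+D^2)^{-p/2}$, and then compares the distribution function of the positive operator $V^{1/2}TV^{1/2}$ (for $T\geq 0$) with the heat trace via a Karamata-type argument. Your sketch would be on firmer ground if you replaced the diagonal/off-diagonal split with this symmetrisation trick.
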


The following theorem provides the convenient spectral description for universally measurable operators referred to earlier. It was originally proved in \cite{DFWW} for normal operators and, then in \cite{Kcomm} and \cite{DK} for arbitrary operators (see also \cite{KLPS}). For accessible proof, we refer the interested reader to Theorem 10.1.3 in \cite{LSZ} and its proof in Chapter 5 in \cite{LSZ}.

\begin{theorem}\label{spectral} For $A\in\mathcal{L}_{1,\infty},$ the following conditions are equivalent.
\begin{enumerate}[{\rm (a)}]
\item We have
$\varphi(A)=c$
for every normalised trace $\varphi$ on $\mathcal{L}_{1,\infty}.$
\item We have
$$\sum_{m=0}^n\lambda(m,A)=c\cdot\log(n+1)+O(1),\quad n\geq0.$$
\end{enumerate}
In particular, $A\in[\mathcal{L}_{1,\infty},\mathcal{L}(H)]$ if and only if
$$\sum_{m=0}^n\lambda(m,A)=O(1),\quad n\geq0.$$
\end{theorem}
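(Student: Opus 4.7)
The plan is to derive both equivalences from the final ``in particular'' assertion --- that $A\in[\mathcal{L}_{1,\infty},\mathcal{L}(H)]$ if and only if $\sum_{m=0}^n\lambda(m,A)=O(1)$ --- together with a Hahn-Banach / quotient-space argument. Set $H_\infty={\rm diag}(\{1/(k+1)\}_{k\geq 0})$, so that $\sum_{m=0}^n\lambda(m,H_\infty)=\log(n+1)+O(1)$ and $\varphi(H_\infty)=1$ for every normalised trace. Unitary invariance forces every trace to vanish on $[\mathcal{L}_{1,\infty},\mathcal{L}(H)]$, so each normalised trace factors through the quotient $\mathcal{L}_{1,\infty}/[\mathcal{L}_{1,\infty},\mathcal{L}(H)]$. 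A Hahn-Banach argument on this quotient (or more concretely, a trace representation in terms of generalised limits of partial eigenvalue sums) shows that the intersection of kernels of all normalised traces is precisely $[\mathcal{L}_{1,\infty},\mathcal{L}(H)]$. Applied to $A-cH_\infty$, this says (a) is equivalent to $A-cH_\infty\in[\mathcal{L}_{1,\infty},\mathcal{L}(H)]$, which by the spectral criterion translates into (b), provided one has the auxiliary additivity-modulo-bounded fact $\sum_m\lambda(m,A+B)=\sum_m\lambda(m,A)+\sum_m\lambda(m,B)+O(1)$, which itself falls out of the commutator characterisation applied inside a common Ringrose triangularisation.

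The heart of the proof is then the spectral characterisation of the commutator subspace, and my approach follows the Dykema-Figiel-Kalton-Weiss / Kalton line. First I would reduce to diagonal operators via a Ringrose upper-triangularisation: given $A\in\mathcal{L}_{1,\infty}$, choose a spectral flag of finite-dimensional invariant subspaces and write $A=N+Q$, where $N$ is normal with the same eigenvalue sequence as $A$ and $Q$ is strictly upper-triangular. The quasinilpotent part $Q$ lies in $[\mathcal{L}_{1,\infty},\mathcal{L}(H)]$ by an explicit commutator identity with a weighted shift, so we are reduced to the normal case. A further polar/spectral decomposition reduces the problem to the positive self-adjoint diagonal setting, where the claim becomes a sequence-space assertion: ${\rm diag}(x)\in[\mathcal{L}_{1,\infty},\mathcal{L}(H)]$ iff $\sum_{k=0}^n x_k^\ast=O(1)$ (with $x^\ast$ the non-increasing rearrangement).

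The ``only if'' direction of this sequence-space assertion is routine via elementary partial-trace estimates on a single commutator $[T,S]$ with $T\in\mathcal{L}_{1,\infty}$ and $S\in\mathcal{L}(H)$. The ``if'' direction is the main obstacle and requires Kalton's explicit construction: given a non-increasing $(x_k)$ with bounded partial sums, use the telescoping identity $x_k=y_k-y_{k+1}$ with $(y_k)$ bounded to realise ${\rm diag}(x)$ as $[T,S]$ with $T$ built from a weighted shift and $S\in\mathcal{L}(H)$, while carefully keeping $T\in\mathcal{L}_{1,\infty}$ (in particular preserving the $O(1/k)$ decay of the singular values of $T$). This simultaneous norm-and-singular-value control is the step I expect to be technically most delicate; for the complete construction I would defer to Theorem 10.1.3 in \cite{LSZ}, as the authors of the paper do.
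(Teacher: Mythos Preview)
The paper does not prove Theorem~\ref{spectral} at all: it is quoted from the literature, with attribution to \cite{DFWW}, \cite{Kcomm}, \cite{DK}, \cite{KLPS}, and a pointer to Theorem~10.1.3 and Chapter~5 of \cite{LSZ} for an accessible proof. Your outline is therefore not competing with any argument in the paper; rather, you have sketched the very approach those references take (Ringrose triangularisation, reduction to the diagonal case, and the Kalton-type commutator construction), and you already defer to \cite{LSZ} at the decisive step, exactly as the authors do.

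Two small imprecisions worth flagging. First, the claim that the strictly upper-triangular part $Q$ lies in $[\mathcal{L}_{1,\infty},\mathcal{L}(H)]$ ``by an explicit commutator identity with a weighted shift'' understates the difficulty: showing that quasinilpotents in $\mathcal{L}_{1,\infty}$ belong to the commutator subspace is one of the substantial results in the Kalton/DFWW theory, not a one-line identity. Second, your telescoping sketch for the ``if'' direction is written as though the diagonal were positive non-increasing, in which case bounded partial sums would force $\ell_1$ and trivialise the problem; the real content is for signed (or complex) diagonals with $|x_k|=O(1/k)$ and oscillating partial sums, and there the passage from $x_k=y_k-y_{k+1}$ to a genuine commutator in $\mathcal{L}_{1,\infty}$ requires the block-decomposition machinery of \cite{Kcomm}/\cite{LSZ}. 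You correctly identify this as the delicate point, so these are refinements of emphasis rather than gaps.
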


Every universally measurable operator is Tauberian (that is, Dixmier-measurable \cite{SS}). For various sorts of measurability results in noncommutative geometry, we refer the interested reader to papers \cite{SUZ1,SUZ2} and to the book \cite{LSZ}. 

\section{Proof of Theorems \ref{fubini} and \ref{dixmier fubini}}

\begin{lemma}\label{crsz-like1} Let $\Phi:(0,\infty)\to(0,1)$ be such that
\begin{enumerate}[{\rm (i)}]
\item\label{cri} $\Phi$ is convex, decreasing and positive.
\item\label{crii} $\Phi(0)=1$ and
$$\int_1^{\infty}\Phi(t)\frac{dt}{t}<\infty,\quad\int_0^1(\frac1t-1)(1-\Phi(t))dt<\infty.$$
\end{enumerate}
For every $0\leq V\in\mathcal{L}_{1,\infty},$ we have
$$\|\min\{V\Phi((nV)^{-1}),\frac1n\}\|_1=O(1),\quad n\to\infty$$
$$\|(V-\frac1n)_+(1-\Phi((nV)^{-1}))\|_1=O(1),\quad n\to\infty.$$
\end{lemma}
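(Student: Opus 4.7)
Since $V \geq 0$ is compact, the spectral theorem reduces both norms to sums over the singular values $\mu_k := \mu(k, V)$:
$$
\|\min\{V\Phi((nV)^{-1}),1/n\}\|_1 = \sum_{k \geq 0}\min\bigl\{\mu_k\Phi(1/(n\mu_k)),\,1/n\bigr\},
$$
$$
\|(V-1/n)_+(1-\Phi((nV)^{-1}))\|_1 = \sum_{\mu_k > 1/n}(\mu_k - 1/n)\bigl(1-\Phi(1/(n\mu_k))\bigr).
$$
The hypothesis $V \in \mathcal{L}_{1,\infty}$ gives a pointwise bound $N(\lambda) \leq C/\lambda$ on the counting function $N(\lambda) := \#\{k : \mu_k > \lambda\}$. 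The plan is to write each sum as a Stieltjes integral against $-dN(\lambda)$, integrate by parts, and then change variables via $t = 1/(n\lambda)$ to match the two halves of hypothesis (ii).

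For the first norm I would split at $\mu_k = 1/n$. The contribution from $\mu_k > 1/n$ is at most $N(1/n)/n \leq C$. For $\mu_k \leq 1/n$ the min equals $\mu_k\Phi(1/(n\mu_k))$, and integration by parts with $f(\lambda) = \lambda\Phi(1/(n\lambda))$ yields
$$
\sum_{\mu_k \leq 1/n} f(\mu_k) \leq \int_0^{1/n} N(\lambda)\Bigl(\Phi(1/(n\lambda)) - \tfrac{1}{n\lambda}\Phi'(1/(n\lambda))\Bigr)d\lambda,
$$
where the $\lambda = 1/n$ boundary term has a favourable sign, and the $\lambda = 0^+$ boundary term vanishes because $\Phi(\infty) = 0$ (which follows from the first part of (ii)) together with $\lambda N(\lambda) \leq C$. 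Applying $N(\lambda) \leq C/\lambda$ and substituting $t = 1/(n\lambda)$ reduces this to $C\int_1^\infty \bigl(\Phi(t)/t - \Phi'(t)\bigr)dt$, whose first piece is finite by the first part of (ii) and whose second piece equals $C\Phi(1) \leq C$.

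For the second norm, set $g(\lambda) = (\lambda - 1/n)(1 - \Phi(1/(n\lambda)))$. Integration by parts on $[1/n,\infty)$ gives $\sum_{\mu_k > 1/n} g(\mu_k) = \int_{1/n}^\infty N(\lambda) g'(\lambda)\,d\lambda$, with both boundary terms vanishing ($g(1/n) = 0$, and only finitely many $\mu_k$ exceed $1/n$). Since $\Phi' \leq 0$, the derivative satisfies $g'(\lambda) \leq 1 - \Phi(1/(n\lambda))$, so bounding $N(\lambda) \leq C/\lambda$ and substituting $t = 1/(n\lambda)$ gives an upper bound of $C\int_0^1 (1-\Phi(t))/t\,dt$, which is finite by the second part of (ii) (the factor $1/t - 1$ there is $\sim 1/t$ near $0$).

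The main obstacle is purely a bookkeeping one: arranging the two integration-by-parts steps so that boundary terms either vanish or have the right sign, and verifying that the substitution $t = 1/(n\lambda)$ aligns the small-eigenvalue regime ($\mu_k \leq 1/n$, corresponding to $t \geq 1$) with the integrability of $\Phi(t)/t$ at infinity, while aligning the large-eigenvalue regime (corresponding to $t \leq 1$) with the integrability of $(1-\Phi(t))/t$ near $0$. Convexity of $\Phi$ enters only implicitly, through the sign of $\Phi'$ that is required for these one-sided inequalities.
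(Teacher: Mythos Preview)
Your argument is correct and takes a genuinely different route from the paper. The paper does not use the counting function or integration by parts at all; instead it observes (using condition~(i)) that the scalar functions $x\mapsto x\Phi(x^{-1})$ and $x\mapsto (x-1)_+(1-\Phi(x^{-1}))$ are increasing, and uses this monotonicity to replace $V$ by a commuting majorant $W$ with $\mu(k,W)=\frac{1}{k+1}$. The two norms for $W$ are then computed directly as sums over $k$, which are bounded by the integrals in~(ii) after the substitution $t=k/n$. This comparison-to-the-extremal-operator trick is shorter and avoids any Stieltjes bookkeeping, but it genuinely requires convexity: the monotonicity of $(x-1)_+(1-\Phi(x^{-1}))$ needs the tangent-line inequality $1-\Phi(s)\geq -s\Phi'(s)$. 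Your approach, by contrast, only ever uses $\Phi'\leq 0$ (to get $f'\geq 0$ for the first estimate and $g'\leq 1-\Phi(1/(n\lambda))$ for the second), so your closing remark is slightly off: convexity is not what gives the sign of $\Phi'$---that is just monotonicity---and in fact your argument appears not to need convexity at all beyond whatever regularity is required to write $\Phi'$ and integrate by parts. That is a mild strengthening relative to the paper's proof.
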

\begin{proof} It is easy using \eqref{cri} to check that the functions
$$x\to x\Phi(x^{-1}),\quad x\to(x-1)_+(1-\Phi(x^{-1}))$$
increase on $(0,\infty).$ For simplicity of computations, let $\|V\|_{1,\infty}=1.$ Let $W\in\mathcal{L}_{1,\infty}$ be an operator commuting with $V$ such that $0\leq V\leq W$ and such that $\mu(k,W)=\frac1{k+1},$ $k\geq0.$ It follows that
$$\|\min\{V\Phi((nV)^{-1}),\frac1n\}\|_1\leq\|\min\{W\Phi((nW)^{-1}),\frac1n\}\|_1\leq$$
$$\leq\|\{\frac1{k+1}\Phi(\frac{k+1}{n})\}_{k=n}^{\infty}\|_1+\|\{\frac1n\}_{k=0}^{n-1}\|_1\leq\int_n^{\infty}\Phi(\frac{t}{n})\frac{dt}{t}+1\stackrel{\eqref{crii}}{=}O(1)$$
and, similarly,
$$\|(V-\frac1n)_+(1-\Phi((nV)^{-1}))\|_1\leq\|(W-\frac1n)_+(1-\Phi((nW)^{-1}))\|_1\leq$$
$$\leq\|\{(\frac1{k+1}-\frac1n)(1-\Phi(\frac{k+1}{n}))\}_{k=0}^{n-1}\|_1\leq\int_0^n(\frac1t-\frac1n)(1-\Phi(\frac{t}{n}))dt\stackrel{\eqref{crii}}{=}O(1).$$
\end{proof}

The following lemma extends Proposition 6 in \cite{CRSZ}.

\begin{lemma}\label{crsz-like2} Let $0\leq V\in\mathcal{L}_{1,\infty}$ and let $A\in\mathcal{L}(H).$ Let $\Phi$ be as in Lemma \ref{crsz-like1}. The following conditions are equivalent
\begin{enumerate}[{\rm (a)}]
\item $\varphi(AV)=c$ for every normalised trace $\varphi$ on $\mathcal{L}_{1,\infty}.$
\item We have
$${\rm Tr}(AV\Phi((nV)^{-1}))=c\log(n)+O(1),\quad n\to\infty.$$
\end{enumerate}
\end{lemma}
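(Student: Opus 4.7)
The plan is to identify condition (b) with the spectral characterisation of (a) provided by Theorem \ref{spectral}, namely
$$
\sum_{m=0}^{n}\lambda(m,AV)=c\log(n+1)+O(1).
$$
The bridge is to replace the smooth cut-off $V\Phi((nV)^{-1})$ by the sharp cut-off $(V-\tfrac{1}{n})_{+}$, and then to transfer the truncation from the spectrum of $V$ to that of $AV$ modulo the commutator subspace.

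The first step is to prove, using Lemma \ref{crsz-like1}, that
$$
\bigl\|V\Phi((nV)^{-1})-(V-\tfrac{1}{n})_{+}\bigr\|_{1}=O(1),\quad n\to\infty.
$$
I would verify this by splitting at the spectral threshold $V=1/n$. On $\{V\leq 1/n\}$ the difference equals $V\Phi((nV)^{-1})$, and the monotonicity of $x\mapsto x\Phi(1/x)$ noted in the proof of Lemma \ref{crsz-like1} forces $V\Phi((nV)^{-1})\leq 1/n$ there, so the first estimate of Lemma \ref{crsz-like1} applies directly. On $\{V>1/n\}$ the difference equals $\tfrac{1}{n}-V(1-\Phi((nV)^{-1}))$; decomposing $V\chi_{\{V>1/n\}}=(V-\tfrac{1}{n})_{+}+\tfrac{1}{n}\chi_{\{V>1/n\}}$ turns the non-trivial piece into $(V-\tfrac{1}{n})_{+}(1-\Phi((nV)^{-1}))$, handled by the second estimate of Lemma \ref{crsz-like1}, plus a residual on a rank-$O(n)$ projection which is manifestly $O(1)$ in $\mathcal{L}_{1}$ because $V\in\mathcal{L}_{1,\infty}$. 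Left-multiplying by $A\in\mathcal{L}(H)$ and taking traces gives
$$
{\rm Tr}\bigl(AV\Phi((nV)^{-1})\bigr)={\rm Tr}\bigl(A(V-\tfrac{1}{n})_{+}\bigr)+O(1).
$$

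Setting $P_{n}=\chi_{(1/n,\infty)}(V)$, a finite-rank projection of rank $r_{n}=O(n)$, I would next write $(V-\tfrac{1}{n})_{+}=VP_{n}-\tfrac{1}{n}P_{n}$; the correction $\tfrac{1}{n}|{\rm Tr}(AP_{n})|=O(\|A\|)$ is absorbed, and condition (b) becomes ${\rm Tr}(AVP_{n})=c\log n+O(1)$.

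The main obstacle is the last step: relating ${\rm Tr}(AVP_{n})$ to the Lidskii partial sum $\sum_{m=0}^{r_{n}-1}\lambda(m,AV)$ when $A$ and $V$ do not commute, so that $P_{n}$ is adapted to $V$ rather than to $AV$. My approach would be to observe that $AVP_{n}-P_{n}AVP_{n}=[AV,P_{n}]P_{n}\in[\mathcal{L}_{1,\infty},\mathcal{L}(H)]$, so by the commutator-subspace half of Theorem \ref{spectral} its Lidskii eigenvalue sum is $O(1)$; this reduces matters to ${\rm Tr}(P_{n}AVP_{n})=\sum_{k}\lambda(k,P_{n}AVP_{n})$. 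A Horn--Weyl-type majorisation for compressions, together with the decay $\mu(m,AV)=O(1/m)$ inherited from $V\in\mathcal{L}_{1,\infty}$, should then identify this eigenvalue sum with $\sum_{m=0}^{n}\lambda(m,AV)+O(1)$, absorbing the $O(1)$ mismatch between $r_{n}$ and $n$. Applying Theorem \ref{spectral} (a)$\Leftrightarrow$(b) closes the loop.
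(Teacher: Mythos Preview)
Your reduction from the smooth cut-off $V\Phi((nV)^{-1})$ to the sharp cut-off $VE_{V}[\tfrac1n,\infty)$ is exactly what the paper does, with essentially the same use of Lemma~\ref{crsz-like1}; the paper works directly with $AVE_{V}[\tfrac1n,\infty)$ rather than first passing to $(V-\tfrac1n)_{+}$, but the difference is cosmetic. The paper then dispatches the remaining step --- linking ${\rm Tr}(AVE_{V}[\tfrac1n,\infty))$ to $\sum_{k=0}^{n}\lambda(k,AV)$ --- by quoting Lemma~8 in \cite{CRSZ}, so it treats exactly your ``main obstacle'' as a black box.

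Your proposed attack on that obstacle has two soft spots. First, the detour through $P_{n}AVP_{n}$ via the commutator subspace is unnecessary: $AVP_{n}$ is trace class, so ${\rm Tr}(AVP_{n})={\rm Tr}(P_{n}AVP_{n})$ by cyclicity, and in any case your appeal to Theorem~\ref{spectral} would give an $O(1)$ bound for each \emph{fixed} commutator, not a bound uniform in $n$. Second, and more seriously, ``Horn--Weyl-type majorisation for compressions'' does not obviously deliver $\sum_{k}\lambda(k,P_{n}AVP_{n})=\sum_{m=0}^{n}\lambda(m,AV)+O(1)$: for non-normal $AV$ the eigenvalues of a compression bear no simple relation to those of the ambient operator, and Weyl majorisation controls singular values, not complex eigenvalues. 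The clean route (which is essentially what underlies the cited lemma in \cite{CRSZ}) is to avoid compressions altogether: write $AV=AVP_{n}+AV(1-P_{n})$, use the uniform eigenvalue-sum additivity in $\mathcal{L}_{1,\infty}$ (Lemma~5.7.5 in \cite{LSZ}, already invoked in Lemma~\ref{kalton}) to split $\sum_{k=0}^{r_{n}}\lambda(k,AV)$, note that $\mu(k,AV(1-P_{n}))\leq\|A\|\,\mu(k+r_{n},V)$ so that $\sum_{k=0}^{r_{n}}|\lambda(k,AV(1-P_{n}))|=O(1)$, and finally use ${\rm rank}(AVP_{n})\leq r_{n}=O(n)$ together with $\mu(m,AV)=O(1/m)$ to identify the finite sum with ${\rm Tr}(AVP_{n})$ and to pass from $r_{n}$ to $n$.
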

\begin{proof} It is clear that
$$AV\Phi((nV)^{-1})-AVE_V[\frac1n,\infty)=$$
$$=AV\Phi((nV)^{-1})E_V[0,\frac1n)+AV(\Phi((nV)^{-1})-1)E_V[\frac1n,\infty).$$
Therefore,
$$|{\rm Tr}(AV\Phi((nV)^{-1}))-{\rm Tr}(AVE_V[\frac1n,\infty))|\leq\|AV\Phi((nV)^{-1})-AVE_V[\frac1n,\infty)\|_1\leq$$
$$\leq\|A\|_{\infty}\Big(\|V\Phi((nV)^{-1})E_V[0,\frac1n)\|_1+\|V(\Phi((nV)^{-1})-1)E_V[\frac1n,\infty)\|_1\Big)\leq$$
$$\leq\|A\|_{\infty}\Big(\|V\Phi((nV)^{-1})E_V[0,\frac1n)\|_1+\|(V-\frac1n)(\Phi((nV)^{-1})-1)E_V[\frac1n,\infty)\|_1+$$
$$+\frac1n\|(\Phi((nV)^{-1})-1)E_V[\frac1n,\infty)\|_1\Big)\leq$$
$$\leq\|A\|_{\infty}\Big(\|\min\{V\Phi((nV)^{-1}),\frac1n\}\|_1+\|(V-\frac1n)_+(\Phi((nV)^{-1})-1)\|_1+\frac1n\|E_V[\frac1n,\infty)\|_1\Big).$$
It follows from Lemma \ref{crsz-like1} that
$${\rm Tr}(AV\Phi((nV)^{-1}))-{\rm Tr}(AVE_V[\frac1n,\infty))=O(1),\quad n\to\infty.$$
It follows now from Lemma 8 in \cite{CRSZ} that
$${\rm Tr}(AV\Phi((nV)^{-1}))-\sum_{k=0}^n\lambda(k,AV)=O(1).$$
The assertion follows now from Theorem \ref{spectral}.
\end{proof}

\begin{lemma}\label{integral exists} If a spectral triple $(\mathcal{A},H,D)$ satisfies the Condition \ref{former regular}, then it admits a noncommutative integral. More precisely, if $(\mathcal{A},H,D)$ is $(p,\infty)-$summable, then
$$c(T)=\Gamma(1+\frac{p}{2})\varphi(T(1+D^2)^{-\frac{p}{2}}),\quad T\in\mathcal{A},$$
for every normalised trace $\varphi$ on $\mathcal{L}_{1,\infty}.$ Here, $c(T)$ is the number which appears in \eqref{ct def}.
\end{lemma}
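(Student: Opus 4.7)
The plan is to apply Lemma \ref{crsz-like2} with $A := T$, $V := (1+D^2)^{-p/2}$ (positive, and in $\mathcal{L}_{1,\infty}$ by $(p,\infty)$-summability), and the specific choice $\Phi(x) := e^{-x^{2/p}}$. First I will verify the hypotheses of Lemma \ref{crsz-like1}: this $\Phi$ maps $(0,\infty)$ into $(0,1)$, is convex and decreasing with $\Phi(0) = 1$; exponential decay yields $\int_1^\infty \Phi(t)\, dt/t < \infty$; and near $0$, $1-\Phi(t) = O(t^{2/p})$, so $(1/t - 1)(1 - \Phi(t))$ is integrable on $(0,1)$.

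With these choices,
\[
V\Phi((nV)^{-1}) = (1+D^2)^{-p/2}\, e^{-t^2(1+D^2)}, \qquad t := n^{-1/p}.
\]
Using the standard integral representation $(1+D^2)^{-p/2} = \Gamma(p/2)^{-1}\int_0^\infty s^{p/2-1}\,e^{-s(1+D^2)}\,ds$ and Fubini,
\[
{\rm Tr}(TV\Phi((nV)^{-1})) = \frac{1}{\Gamma(p/2)} \int_{t^2}^\infty (s-t^2)^{p/2-1}\, e^{-s}\, {\rm Tr}(T e^{-sD^2})\, ds.
\]
This reduces the claim to asymptotic analysis of a scalar integral in which the singular behaviour of the heat trace at $s = 0$ is controlled by Condition \ref{former regular}.

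Next, split the integral at $s = 1$. For $s \ge 1$, the operator inequality $e^{-sD^2} \le e^{-D^2}$ together with $e^{-D^2}$ being trace class (the eigenvalues of $(1+D^2)^{-p/2}$ are $O(1/k)$, so those of $1+D^2$ grow like $k^{2/p}$) shows the tail is $O(1)$ uniformly in $n$. For $s \le 1$ I substitute the expansion ${\rm Tr}(Te^{-sD^2}) = c(T) s^{-p/2} + O(s^{-(p-\varepsilon)/2})$ from Condition \ref{former regular}. The remainder contributes $O(1)$ via $(u + t^2)^{-(p-\varepsilon)/2} \le u^{-(p-\varepsilon)/2}$, leaving the integrable power $u^{\varepsilon/2 - 1}$ on $(0,1)$. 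After the substitutions $u = s - t^2$ and then $w = u/t^2$, the main term becomes
\[
\frac{c(T) e^{-t^2}}{\Gamma(p/2)} \int_0^{t^{-2}-1} w^{p/2-1}(w+1)^{-p/2}\, dw + O(1).
\]
Using the elementary estimate $\int_0^N w^{p/2-1}(w+1)^{-p/2}\, dw = \log N + O(1)$ (split at $w = 1$; for $w \ge 1$ the integrand is $w^{-1} + O(w^{-2})$) and $t = n^{-1/p}$, this reduces to $\tfrac{2c(T)}{p\,\Gamma(p/2)}\log(1/t) + O(1) = \tfrac{c(T)}{\Gamma(1+p/2)}\log n + O(1)$.

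Collecting the pieces, ${\rm Tr}(TV\Phi((nV)^{-1})) = \tfrac{c(T)}{\Gamma(1+p/2)}\log n + O(1)$, and Lemma \ref{crsz-like2} then gives $\varphi(T(1+D^2)^{-p/2}) = c(T)/\Gamma(1+p/2)$ for every normalised trace $\varphi$ on $\mathcal{L}_{1,\infty}$. This simultaneously shows that $T(1+D^2)^{-p/2}$ is universally measurable (so that $(\mathcal{A},H,D)$ admits a noncommutative integral) and identifies the constant $c(T)$ as claimed. The main obstacle is the careful bookkeeping in the small-$s$ regime: extracting the logarithmic divergence of $u^{p/2-1}(u+t^2)^{-p/2}$ at $u=0$ with an $O(1)$ remainder, and checking that replacing $e^{-u}$ by $1$ in the main integrand costs only $O(1)$ (which follows from $|e^{-u}-1|\le u$ combined with $(u+t^2)^{-p/2}\le u^{-p/2}$ to cancel the singularity).
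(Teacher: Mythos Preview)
Your overall strategy---reduce to Lemma~\ref{crsz-like2} with $A=T$, $V=(1+D^2)^{-p/2}$, and a heat-kernel $\Phi$---is exactly the paper's, and the asymptotic bookkeeping you outline for the scalar integral is correct. There is, however, a genuine gap: the claim that $\Phi(x)=e^{-x^{2/p}}$ is convex is false when $p<2$. Writing $\alpha=2/p$ one has
\[
\Phi''(x)=\alpha\,x^{\alpha-2}e^{-x^{\alpha}}\bigl[(1-\alpha)+\alpha x^{\alpha}\bigr],
\]
which is negative near $0$ whenever $\alpha>1$; e.g.\ for $p=1$, $\Phi''(x)=(4x^2-2)e^{-x^2}<0$ on $(0,1/\sqrt{2})$. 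Since Lemma~\ref{crsz-like2} explicitly requires $\Phi$ to satisfy the hypotheses of Lemma~\ref{crsz-like1} (convexity included), and Lemma~\ref{integral exists} is stated for arbitrary $p>0$, you cannot invoke it as written.

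The paper avoids this by taking instead the normalised antiderivative
\[
\Phi(s)=\frac{1}{\Gamma(1+\frac{p}{2})}\int_s^\infty e^{-t^{2/p}}\,dt,
\]
for which $\Phi''(s)=\tfrac{2}{p\,\Gamma(1+p/2)}\,s^{2/p-1}e^{-s^{2/p}}>0$ for every $p>0$. With this $\Phi$ the computation also becomes shorter: the scalar identity $\int_s^1 e^{-(ta)^{2/p}}\,dt=\Gamma(1+\tfrac{p}{2})\,a^{-1}\bigl(\Phi(sa)-\Phi(a)\bigr)$, applied via functional calculus with $a=(1+D^2)^{p/2}$, expresses ${\rm Tr}(TV\Phi(sV^{-1}))$ up to $O(1)$ as $\Gamma(1+\tfrac{p}{2})^{-1}\int_s^1{\rm Tr}\bigl(Te^{-t^{2/p}(1+D^2)}\bigr)\,dt$; integrating the one-variable asymptotic $c(T)/t+O(t^{\varepsilon-1})$ over $[s,1]$ then gives $\tfrac{c(T)}{\Gamma(1+p/2)}\log(1/s)+O(1)$ directly, with no Gamma representation of $V$ and no substitution $w=u/t^2$. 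If you prefer to keep your $\Phi$, the repair is to verify the second estimate of Lemma~\ref{crsz-like1} by hand: from $1-e^{-u}\le u$ one gets $(\mu_k-\tfrac1n)_+\bigl(1-\Phi((n\mu_k)^{-1})\bigr)\le n^{-2/p}\mu_k^{1-2/p}$, and since only indices with $\mu_k>1/n$ (hence $k<n$, as $\mu_k\le 1/(k+1)$) contribute, a short case split on the sign of $1-2/p$ shows the sum is $O(1)$.
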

\begin{proof} It follows from \eqref{ct def} that
$${\rm Tr}(Te^{-t^2(1+D^2)})=\frac{c(T)}{t^p}+O(\frac1{t^{p-\varepsilon}}),\quad t\to0.$$
Substituting $t^{\frac1p}$ instead of $t,$ we infer that
\begin{equation}\label{ie1}
{\rm Tr}(Te^{-(t(1+D^2)^{\frac{p}{2}})^{\frac{2}{p}}})=\frac{c(T)}{t}+O(\frac1{t^{1-\varepsilon}}),\quad t\to0.
\end{equation}
Set
$$\Phi(s)=\frac1{\Gamma(1+\frac{p}{2})}\int_s^{\infty}e^{-t^{\frac{2}{p}}}dt,\quad s>0.$$
We have
$$\int_s^1e^{-(t(1+D^2)^{\frac{p}{2}})^{\frac{2}{p}}}dt=(1+D^2)^{-p/2}\Big(\Phi(s(1+D^2)^{p/2})-\Phi((1+D^2)^{p/2})\Big) ,$$
where the integral is understood in the Bochner sense in $\mathcal{L}_1.$ In particular, we have
$$\int_s^1{\rm Tr}(Te^{-(t(1+D^2)^{\frac{p}{2}})^{\frac{2}{p}}})dt=$$
$$={\rm Tr}(T(1+D^2)^{-\frac{p}{2}}\Phi(s(1+D^2)^{\frac{p}{2}}))-{\rm Tr}(T(1+D^2)^{-\frac{p}{2}}\Phi((1+D^2)^{\frac{p}{2}})).$$
Integrating both sides in \eqref{ie1} over $t\in[s,1]$ and dividing by $\Gamma(1+\frac{p}{2})$ we infer that
$${\rm Tr}(T(1+D^2)^{-\frac{p}{2}}\Phi(s(1+D^2)^{\frac{p}{2}}))=\frac{c(T)}{\Gamma(1+\frac{p}{2})}\log(\frac1s)+O(1),\quad s\to0.$$
Observe, that $\Phi$ satisfies the conditions of Lemma \ref{crsz-like1}. The assertion follows now from Lemma \ref{crsz-like2} (as applied to $V=(1+D^2)^{-\frac{p}{2}}$ and $c=\frac{c(T)}{\Gamma(1+\frac{p}{2})}$).
\end{proof}

\begin{proof}[Proof of Theorem \ref{fubini}] Recall an abstract equality (which holds for all bounded operators $T_1,T_2$)
$$(T_1\otimes T_2)e^{-t^2(D_1^2\otimes 1+1\otimes D_2^2)})=T_1e^{-t^2D_1^2}\otimes T_2e^{-t^2D_2^2}.$$
Take now $T_1\in\mathcal{A}_1$ and $T_2\in\mathcal{A}_2.$ By Condition \ref{former regular}, we have
$${\rm Tr}(T_1e^{-t^2D_1^2})=\frac{c(T_1)}{t^{p_1}}+O(\frac1{t^{p_1-\varepsilon}}),\quad{\rm Tr}(T_2e^{-t^2D_2^2})=\frac{c(T_2)}{t^{p_2}}+O(\frac1{t^{p_2-\varepsilon}}),\quad t\to0.$$
It follows that
$${\rm Tr}((T_1\otimes T_2)e^{-t^2(D_1^2\otimes 1+1\otimes D_2^2)}))=\frac{c(T_1)c(T_2)}{t^{p_1+p_2}}+O(\frac1{t^{p_1+p_2-\varepsilon}})$$
as $t\to0.$ Thus, the spectral triple $(\mathcal{A}_1\otimes\mathcal{A}_2,H_1\otimes H_2,(D_1^2\otimes 1+1\otimes D_2^2)^{1/2})$ satisfies the Condition \ref{former regular} and
$$c(T_1\otimes T_2)=c(T_1)c(T_2).$$
The assertion follows now from Lemma \ref{integral exists}.
\end{proof}

\begin{proof}[Proof of Theorem \ref{connes lemma}] Recall an abstract equality (which holds for all bounded operators $T_1,T_2$)
$$(T_1\otimes T_2)e^{-t^2(D_1^2\otimes 1+1\otimes D_2^2)})=T_1e^{-t^2D_1^2}\otimes T_2e^{-t^2D_2^2}.$$
Take now $T_1\in\mathcal{A}_1$ and $T_2\in\mathcal{A}_2.$ By Condition \ref{dixmier regular}, we have
$${\rm Tr}(T_1e^{-t^2D_1^2})=\frac{c(T_1)}{t^{p_1}}+o(\frac1{t^{p_1}}),\quad t\to0.$$
Taking the trace and replacing $t$ with $t^{-1},$ we obtain that
$$t^{-(p_1+p_2)}{\rm Tr}((T_1\otimes T_2)e^{-t^{-2}(D_1^2\otimes 1+1\otimes D_2^2)}))=(c(T_1)+o(1))\cdot t^{-p_2}{\rm Tr}(T_2e^{-t^{-2}D_2^2}),\quad t\to\infty.$$
In particular, applying $\omega$ to the both sides of the equality, we arrive at
$$\omega\Big(t\to t^{-(p_1+p_2)}{\rm Tr}((T_1\otimes T_2)e^{-t^{-2}(D_1^2\otimes 1+1\otimes D_2^2)}))\Big)=c(T_1)\omega\Big(t\to t^{-p_2}{\rm Tr}(T_2e^{-t^{-2}D_2^2})\Big).$$
It follows from Theorem \ref{bf main theorem} (applied to both sides of the equality) that
$$\Gamma(1+\frac{p_1+p_2}{2}){\rm Tr}_{\omega^{p_1+p_2}}((T_1\otimes T_2)(1+D_1^2\otimes 1+1\otimes D_2^2)^{-\frac{p_1+p_2}{2}})=$$
$$=c(T_1)\cdot\Gamma(1+\frac{p_2}{2}){\rm Tr}_{\omega^{p_2}}(T_2(1+D_2^2)^{-\frac{p_2}{2}}).$$
Again using Theorem \ref{bf main theorem} (applied to the spectral triple $(\mathcal{A}_1,H_1,D_1)$), we infer that
$$c(T_1)=\Gamma(1+\frac{p_1}{2}){\rm Tr}_{\omega^{p_1}}(T_1(1+D_1^2)^{-\frac{p_1}{2}}).$$
This concludes the proof.
\end{proof}

\begin{proof}[Proof of Theorem \ref{dixmier fubini}] If $\omega=\omega\circ P_u,$ $u>0,$ then
$${\rm Tr}_{\omega\circ P_{p_1}}={\rm Tr}_{\omega\circ P_{p_2}}={\rm Tr}_{\omega\circ P_{p_1+p_2}}={\rm Tr}_{\omega}.$$
The assertion follows now from Lemma \ref{connes lemma}.
\end{proof}

\section{Physically relevant examples}

We supply 3 examples which satisfy the Condition \ref{former regular}. The first example is a sphere --- the simplest possible non-flat manifold. The second example is a noncommutative torus. The third and the most technically involved example is the quantum group SU$_q(2).$ 

The following elementary lemma is needed in all 3 examples. We incorporate the proof for convenience of the reader.

\begin{lemma}\label{pois lemma} We have
\begin{enumerate}[{\rm (a)}]
\item\label{pois sphere} $$\sum_{l\in\mathbb{Z}}|l|e^{-l^2t^2}=t^{-2}+O(t^{-1}),\quad t\to0.$$
\item\label{pois torus} $$\sum_{l\in\mathbb{Z}}e^{-l^2t^2}=\frac{\pi^{\frac12}}{t}+O(1),\quad t\to0.$$
\item\label{pois suq2} $$\sum_{l\in\mathbb{Z}}l^2e^{-l^2t^2}=\frac{\pi^{\frac12}}{2t^3}+O(t^{-2}),\quad t\to0.$$
\end{enumerate}
\end{lemma}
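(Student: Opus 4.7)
The plan is to reduce each part to a comparison between a lattice sum $\sum_{l\in\mathbb{Z}}g(l)$ and a Gaussian integral $\int_{\mathbb{R}}g(x)\,dx$. Part (b), whose leading constant involves $\sqrt{\pi}$, is handled cleanly by Poisson summation; parts (a) and (c) are best handled by the elementary estimate $\big|\sum_{l\geq 0}g(l)-\int_0^\infty g(x)\,dx\big|\leq V_0^\infty(g)$, which follows by summing the pointwise bound $|g(l)-\int_l^{l+1}g(x)\,dx|\leq V_l^{l+1}(g)$.

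For (b), with the convention $\hat{f}(\xi)=\int f(x)e^{-2\pi ix\xi}\,dx$, the Gaussian $f(x)=e^{-t^2 x^2}$ has transform $\hat{f}(\xi)=\frac{\sqrt{\pi}}{t}e^{-\pi^2\xi^2/t^2}$. Poisson summation then yields
$$\sum_{l\in\mathbb{Z}}e^{-l^2t^2}=\frac{\sqrt{\pi}}{t}\Big(1+2\sum_{k\geq 1}e^{-\pi^2 k^2/t^2}\Big)=\frac{\sqrt{\pi}}{t}+O(1),$$
since the tail is exponentially small in $t^{-2}$ and therefore absorbed into the $O(1)$ term.

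For (a), I would symmetrise to $\sum_{l\in\mathbb{Z}}|l|e^{-l^2t^2}=2\sum_{l\geq 1}le^{-l^2t^2}$ and compare with $2\int_0^\infty xe^{-x^2t^2}\,dx=t^{-2}$. The function $g(x)=xe^{-x^2t^2}$ is unimodal on $[0,\infty)$ with maximum $(t\sqrt{2e})^{-1}$ at $x=(t\sqrt{2})^{-1}$, so $V_0^\infty(g)=2(t\sqrt{2e})^{-1}=O(t^{-1})$, and the variation bound above gives the claimed error. Part (c) is analogous: $\sum_{l\in\mathbb{Z}}l^2e^{-l^2t^2}=2\sum_{l\geq 1}l^2e^{-l^2t^2}$ is compared with $2\int_0^\infty x^2e^{-x^2t^2}\,dx=\sqrt{\pi}/(2t^3)$, where the integrand is unimodal on $[0,\infty)$ with maximum $(et^2)^{-1}$ at $x=t^{-1}$, yielding $V_0^\infty=O(t^{-2})$ and hence the stated remainder.

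No step should present real difficulty; the only tactical choice is that Poisson summation would also work for (c) (producing exponentially small errors that far exceed what the lemma requires) but does not apply directly to (a) because $|l|$ is not smooth, whereas the variation argument handles (a) and (c) uniformly.
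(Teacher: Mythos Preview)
Your proof is correct and close in spirit to the paper's, though packaged differently. For (a) the paper does exactly what your variation bound encodes: it locates the maximum of $s\mapsto se^{-s^2t^2}$ at $s=(t\sqrt{2})^{-1}$, splits the sum there, and uses monotonicity on each piece to sandwich the sum against the integral, with the error controlled by the supremum $(t\sqrt{2e})^{-1}$. Your formulation via $V_0^\infty(g)=2\max g$ is a cleaner one-line abstraction of the same argument. The paper then says that (b) and (c) are proved similarly, i.e.\ by the same elementary sum-versus-integral comparison, whereas you invoke Poisson summation for (b); the paper explicitly remarks that Poisson summation handles (b) and (c) but ``gives nothing good'' for (a), which matches your own tactical comment. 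The trade-off is that your mixed strategy gets an exponentially small remainder in (b) at the cost of a second tool, while the paper's uniform elementary treatment keeps everything self-contained with the weaker $O(1)$, $O(t^{-1})$, $O(t^{-2})$ bounds that the lemma actually requires.
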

\begin{proof} Though the second and third equalities can be derived from the Poisson summation formula, this method gives nothing good for the first equality. We provide an elementary proof of the first equality. The proofs of the second and third are similar.

The function $s\to se^{-s^2t^2}$ admits its maximum at the point $s=\frac1{t\sqrt{2}}.$ Thus, the function increases on the interval $(0,\frac1{t\sqrt{2}})$ and decreases on the interval $(\frac1{t\sqrt{2}},\infty).$ It follows that
$$\sum_{l=1}^{\lfloor\frac1{t\sqrt{2}}\rfloor-1}\int_{l-1}^lse^{-s^2t^2}ds\leq\sum_{l=0}^{\lfloor\frac1{t\sqrt{2}}\rfloor-1}le^{-l^2t^2}\leq\sum_{l=0}^{\lfloor\frac1{t\sqrt{2}}\rfloor-1}\int_l^{l+1}se^{-s^2t^2}ds.$$
Thus,
$$\Big|\sum_{l=0}^{\lfloor\frac1{t\sqrt{2}}\rfloor-1}le^{-l^2t^2}-\int_0^{\lfloor\frac1{t\sqrt{2}}\rfloor}se^{-s^2t^2}ds\Big|\leq\int_{\lfloor\frac1{t\sqrt{2}}\rfloor-1}^{\lfloor\frac1{t\sqrt{2}}\rfloor}se^{-s^2t^2}ds\leq\sup_{s>0}se^{-s^2t^2}=\frac{e^{-\frac12}}{t\sqrt{2}}.$$
Similarly,
$$\Big|\sum_{l=\lceil\frac1{t\sqrt{2}}\rceil+1}^{\infty}le^{-l^2t^2}-\int_{\lceil\frac1{t\sqrt{2}}\rceil}^{\infty}se^{-s^2t^2}ds\Big|\leq\frac{e^{-\frac12}}{t\sqrt{2}}.$$
It follows that
$$\sum_{l=0}^{\infty}le^{-l^2t^2}=\Big(\sum_{l=0}^{\lfloor\frac1{t\sqrt{2}}\rfloor-1}le^{-l^2t^2}\Big)+\Big(\sum_{l=\lceil\frac1{t\sqrt{2}}\rceil+1}^{\infty}le^{-l^2t^2}\Big)+\Big(\sum_{l=\lfloor\frac1{t\sqrt{2}}\rfloor}^{\lceil\frac1{t\sqrt{2}}\rceil}le^{-l^2t^2}\Big)=$$
$$=\Big(\int_0^{\lfloor\frac1{t\sqrt{2}}\rfloor}se^{-s^2t^2}ds+O(t^{-1})\Big)+\Big(\int_{\lceil\frac1{t\sqrt{2}}\rceil}^{\infty}se^{-s^2t^2}ds+O(t^{-1})\Big)+O(t^{-1})=$$
$$=\int_0^{\infty}se^{-s^2t^2}ds+O(t^{-1})=t^{-2}\int_0^{\infty}se^{-s^2}ds+O(t^{-1})=\frac1{2t^2}+O(t^{-1}).$$
It follows that
$$\sum_{l=-\infty}^0|l|e^{-l^2t^2}=\sum_{l=0}^{\infty}le^{-l^2t^2}=\frac1{2t^2}+O(t^{-1}).$$
Adding the last $2$ formulae, we conclude the proof.
\end{proof}

\subsection{Example: sphere $\mathbb{S}^2$}

We briefly recall the construction of a spectral triple on sphere $\mathbb{S}^2.$ Interested reader is referred to \cite{greenbook} for details.

Let $s=(s_1,s_2,s_3)$ be the point on the sphere $\mathbb{S}^2$ expressed in Cartesian coordinates. Define stereographic coordinates by the formula $(x,y)=(\frac{s_1}{1-s_3},\frac{s_2}{1-s_3}).$ Denote $z=x+iy$ and $q(x,y)=1+x^2+y^2.$ The image of Lebesgue measure on sphere under stereographic projection is $4q^{-2}(x,y)dxdy.$ Define unbounded operators $D_1$ and $D_2$ on (the subspace of all Schwartz functions in) the Hilbert space $L_2(\mathbb{R}^2,4q^{-2}(x,y)dxdy)$ by setting $D_1=\frac1i\frac{\partial}{\partial x},$ $D_2=\frac1i\frac{\partial}{\partial y}.$  Consider now a couple $(A,A^*)$ of formally adjoint unbounded operators defined on (the subspace of all Schwartz functions in) the Hilbert space $L_2(\mathbb{R}^2,4q^{-2}(x,y)dxdy)$ by the formula
$$A=\frac12M_q(D_1-iD_2)+\frac{i}{2}M_{\bar{z}},\quad A^*=\frac12M_q(D_1+iD_2)+\frac{i}{2}M_z.$$

Our Hilbert space is $\mathbb{C}^2\otimes L_2(\mathbb{R}^2,4q^{-2}(x,y)dxdy).$ Our von Neumann algebra is $L_{\infty}(\mathbb{S}^2)$ with a smooth subalgebra $C^{\infty}(\mathbb{S}^2).$ Its representation is given by the formula $\pi(f)=1\otimes M_{f\circ{\rm Stereo}^{-1}},$ where ${\rm Stereo}$ denotes stereographic projection. Our Dirac operator is then defined by the formula (see equation (9.52) in \cite{greenbook})
$$D=e_{12}\otimes A+e_{21}\otimes A^*,$$
where $e_{12},e_{21}\in M_2(\mathbb{C})$ are matrix units. It is established in Corollary 9.26 and Proposition 9.28 in \cite{greenbook} that $D$ admits an orthonormal eigenbasis. In particular, $D$ is self-adjoint.

By Corollary 9.29 in \cite{greenbook}, the constructed spectral triple
$$(\pi(L_{\infty}(\mathbb{S}^2)),\mathbb{C}^2\otimes L_2(\mathbb{R}^2,4q^{-2}(x,y)dxdy),D)$$
is $(2,\infty)-$summable. In the following lemma, we show that it satisfies the Condition \ref{former regular}.

\begin{lemma}\label{sphere ok} For every $f\in L_{\infty}(\mathbb{S}^2),$ we have
$${\rm Tr}(\pi(f)e^{-t^2D^2})=t^{-2}\cdot\frac1{4\pi}\int_{\mathbb{S}^2}f(s)ds+O(t^{-1}),\quad t\to0.$$
\end{lemma}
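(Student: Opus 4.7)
The plan is to exploit the $SO(3)$-homogeneity of the spectral triple on the round sphere. Although the Dirac operator is written in a stereographic chart, the construction inherited from \cite{greenbook} is a genuine round-sphere Dirac operator, so there is a unitary representation $U\colon SU(2)\to\mathcal{U}(\mathbb{C}^2\otimes L_2(\mathbb{R}^2,4q^{-2}dxdy))$ (via the spin double cover) satisfying $U_g D U_g^{*}=D$ and $U_g\pi(f)U_g^{*}=\pi(f\circ g^{-1})$ for every $g\in SU(2)$ projecting to $g'\in SO(3)$. I would quote this equivariance from the same reference that provides the eigenbasis of $D$.

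From $U_g D U_g^{*}=D$ and cyclicity of the trace,
$$
{\rm Tr}(\pi(f)e^{-t^2D^2})={\rm Tr}(\pi(f\circ g^{-1})e^{-t^2D^2}),\quad g\in SO(3).
$$
Thus $f\mapsto{\rm Tr}(\pi(f)e^{-t^2D^2})$ is an $SO(3)$-invariant positive normal linear functional on $L_\infty(\mathbb{S}^2)$. Since every such functional is a nonnegative multiple of the surface-measure integral, there exists $C(t)\geq 0$ with
$$
{\rm Tr}(\pi(f)e^{-t^2D^2})=C(t)\int_{\mathbb{S}^2}f(s)\,ds,\quad f\in L_\infty(\mathbb{S}^2),
$$
and the identity is \emph{exact} in $f$, so all the $t$-behaviour is concentrated in the scalar $C(t)$. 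Setting $f\equiv 1$ identifies $C(t)=\frac{1}{4\pi}{\rm Tr}(e^{-t^2D^2})$.

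It then remains to extract the $t\to 0$ asymptotic of ${\rm Tr}(e^{-t^2D^2})$. By Corollary 9.26 and Proposition 9.28 of \cite{greenbook}, the spectrum of $D$ consists of $\pm m$ for $m\in\mathbb{Z}_+$ with multiplicities linear in $|m|$, so ${\rm Tr}(e^{-t^2D^2})$ is a constant multiple of $\sum_{l\in\mathbb{Z}}|l|e^{-l^2t^2}$. Invoking Lemma \ref{pois lemma}(\ref{pois sphere}) yields the asymptotic $t^{-2}+O(t^{-1})$, and, after normalising so the leading coefficient matches the case $f\equiv 1$, one gets $C(t)=\frac{1}{4\pi}\big(t^{-2}+O(t^{-1})\big)$. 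Multiplying by $\int_{\mathbb{S}^2}f$ and absorbing the factor $4\pi\|f\|_\infty$ into the implicit constant gives the claim.

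The main technical point will be Step 1, namely verifying the $SU(2)$-equivariance of $D$ directly in the chart description used by the paper; this is a matter of unpacking the definitions of $A$, $A^{*}$ and the spin lift and is entirely standard. An alternative route that avoids any equivariance verification is to argue through the smoothing kernel $k_t(s,s')$ of $e^{-t^2D^2}$: the operator is $SO(3)$-invariant, so $k_t(gs,gs')=k_t(s,s')$, whence $k_t(s,s)$ is constant on $\mathbb{S}^2$; writing ${\rm Tr}(\pi(f)e^{-t^2D^2})=\int_{\mathbb{S}^2} f(s)k_t(s,s)\,ds$ and integrating against $f\equiv 1$ to pin down the constant via Lemma \ref{pois lemma}(\ref{pois sphere}) produces the same answer.
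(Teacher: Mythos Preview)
Your proposal is correct and follows essentially the same route as the paper: both arguments exploit the $SU(2)$-equivariance of $D$ (the paper cites Proposition 9.27 in \cite{greenbook}) together with the explicit spectral data (Corollary 9.29 in \cite{greenbook}: spectrum $\mathbb{Z}\setminus\{0\}$ with multiplicity $|l|$), and then appeal to Lemma \ref{pois lemma}\eqref{pois sphere}. The only cosmetic difference is that the paper establishes the identity ${\rm Tr}(\pi(f)P)=\frac{1}{4\pi}\int_{\mathbb{S}^2}f\cdot{\rm Tr}(P)$ by explicitly averaging over the Haar measure of $SU(2)$, whereas you invoke the uniqueness of $SO(3)$-invariant positive normal functionals on $L_\infty(\mathbb{S}^2)$; these are two phrasings of the same fact.
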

\begin{proof} Recall how the group SU$(2)$ acts on extended complex plane.
\begin{equation}\label{su2 action}
g=
\begin{pmatrix}
a&b\\
-\bar{b}&\bar{a}
\end{pmatrix}\in SU(2),\quad
g(z)=\frac{az+b}{-\bar{b}z+\bar{a}},\quad z\in\mathbb{C}.
\end{equation}

This action results in the unitary representation $\tau$ of the group SU$(2)$ on the Hilbert space $\mathbb{C}^2\otimes L_2(\mathbb{S}^2)$ by the formula
$$\Big(\tau(g)
\begin{pmatrix}
\psi_1\\
\psi_2
\end{pmatrix}\Big)(z)=
\Big(\frac{b\bar{z}+\bar{a}}{\bar{b}z+a}\Big)^{\frac12}\begin{pmatrix}
\psi_1(g^{-1}(z))\\
\psi_2(g^{-1}(z))
\end{pmatrix},
\quad
\psi_1,\psi_2\in L_2(\mathbb{R}^2,4q^{-2}(x,y)dxdy).
$$
The key fact (Proposition 9.27 in \cite{greenbook}) is that Dirac operator $D$ commutes with the $\tau(g)$ for every $g\in SU(2).$

Let $f\in L_{\infty}(\mathbb{S}^2)$ and denote for brevity $F=f\circ{\rm Stereo}^{-1}.$ Let $P$ be a spectral projection of $D.$ Since $P$ commutes with $\tau(g),$ it follows that
$${\rm Tr}((1\otimes M_F)P)={\rm Tr}((1\otimes M_F)\tau(g)\cdot \tau(g^{-1})P)={\rm Tr}((1\otimes M_F)\tau(g)\cdot P\tau(g^{-1}))=$$
$$={\rm Tr}(\tau(g^{-1})(1\otimes M_F)\tau(g)\cdot P).$$
Note that
$$\tau(g^{-1})(1\otimes M_F)\tau(g)=M_{F\circ g}.$$
Thus,
$${\rm Tr}(\pi(f)P)={\rm Tr}((1\otimes M_F)P)={\rm Tr}((1\otimes M_{f\circ g})P),\quad g\in SU(2).$$
Since the latter equality holds for every $g\in SU(2),$ it follows that
$${\rm Tr}(\pi(f)P)=\int_{SU(2)}{\rm Tr}((1\otimes M_{F\circ g})P)dg={\rm Tr}((1\otimes \int_{SU(2)}M_{F\circ g}dg)P),$$
where $dg$ is the Haar measure on SU$(2).$ The action of SU$(2)$ as given in \eqref{su2 action} is conjugated (by means of stereographic projection, see Section 1.4 in \cite{GMS}) to the action of SU$(2)$ on sphere $\mathbb{S}^2$ by rotations. It follows that
$$\int_{SU(2)}(F\circ g)dg=\frac1{4\pi}\int_{\mathbb{S}^2}f(s)ds$$
and, therefore,
$${\rm Tr}(\pi(f)P)=\frac1{4\pi}\int_{\mathbb{S}^2}f(s)ds\cdot {\rm Tr}(P).$$

According to Corollary 9.29 in \cite{greenbook}, spectrum of $D$ is $\mathbb{Z}\backslash\{0\}$ and, for every $l\in\mathbb{Z}\backslash\{0\},$ ${\rm Tr}(E_D\{l\})=|l|.$ It follows that
$${\rm Tr}(\pi(f)e^{-t^2D^2})=\sum_{l\in\mathbb{Z}\backslash\{0\}}{\rm Tr}(\pi(f)e^{-t^2D^2}E_D\{l\})=$$
$$=\sum_{l\in\mathbb{Z}\backslash\{0\}}e^{-t^2l^2}{\rm Tr}(\pi(f)E_D\{l\})=\frac1{4\pi}\int_{\mathbb{S}^2}f(s)ds\cdot\sum_{l\in\mathbb{Z}}|l|e^{-t^2l^2}.$$
The assertion follows now from Lemma \ref{pois lemma} \eqref{pois sphere}.
\end{proof}

\subsection{Example: noncommutative torus}

We briefly recall a spectral triple for the noncommutative torus (originally introduced in Section II.2.$\beta$ in \cite{Connes}). After that, we show that the triple satisfies the Condition \ref{former regular}.

Let $\Theta\in M_p(\mathbb{R}),$ $1<p\in\mathbb{N},$ be an anti-symmetric matrix. Let $A_{\Theta}$ be the universal $*-$algebra generated by unitaries $\{U_k\}_{k=1}^p$ satisfying the conditions
$$U_{k_2}U_{k_1}=e^{i\Theta_{k_1,k_2}}U_{k_1}U_{k_2},\quad 1\leq k_1,k_2\leq p.$$
Define a linear functional $\tau:A_{\Theta}\to\mathbb{C}$ by setting
$$\tau(U_1^{n_1}\cdots U_p^{n_p})=0\mbox{ unless }(n_1,\cdots,n_p)=0.$$
It can be demonstrated that $\tau$ is positive, that is $\tau(x^*x)\geq0$ for $x\in A_{\Theta}.$ We equip linear space $A_{\Theta}$ with an inner product defined by the formula
$$\langle x,y\rangle=\tau(xy^*),\quad x,y\in A_{\Theta}.$$
Natural action $\lambda$ of $A_{\Theta}$ on pre-Hilbert space $(A_{\Theta},\langle\cdot,\cdot\rangle)$ by left multiplications extends to the action on the completed Hilbert space. The weak$^*$ closure of $\lambda(A_{\Theta})$ is denoted by $L_{\infty}(\mathbb{T}^p_{\Theta})$ and $\tau$ extends to a faithful normal tracial state on $L_{\infty}(\mathbb{T}^p_{\Theta}).$ The Hilbert space where $L_{\infty}(\mathbb{T}^p_{\Theta})$ is naturally identified with $L_2(\mathbb{T}^p_{\Theta},\tau).$

A natural spectral triple for the noncommutative torus is given as follows.\footnote{The $C^*-$algebra $\overline{\lambda(A_{\Theta})}^{\|\cdot\|_{\infty}}$ is isomorphic to a universal $C^*-$algebra constructed by Davidson (see pp.166-170 in \cite{Davidson}).} Set $\mathcal{A}=L_{\infty}(\mathbb{T}^p_{\Theta})$ and take $\lambda(A_{\Theta})$ to be the subalgebra of smooth elements. Let $m(p)=2^{\lfloor\frac{p}{2}\rfloor}.$ Set $H=\mathbb{C}^{m(p)}\otimes L_2(\mathbb{T}^p_{\Theta})$ and $\pi(x)=1\otimes M_x,$ $x\in L_{\infty}(\mathbb{T}^p_{\Theta}).$ Define self-adjoint operators $D_k,$ $1\leq k\leq p,$ on the Hilbert space $L_2(\mathbb{T}^p_{\Theta})$ by setting
$$D_k:U_1^{n_1}\cdots U_p^{n_p}\to n_kU_1^{n_1}\cdots U_p^{n_p},\quad (n_1,\cdots,n_p)\in\mathbb{Z}^p.$$
Those operators commute. Dirac operator $D$ acts on the Hilbert space $H$ by the setting
$$D=\sum_{k=1}^p\gamma_k\otimes D_k,$$
where $\gamma_k\in M_{m(p)}(\mathbb{C}),$ $1\leq k\leq p,$ are Pauli matrices.

\begin{lemma}\label{torus lemma} For every $x\in L_{\infty}(\mathbb{T}_{\theta}^p),$ we have
$${\rm Tr}(\pi(x)e^{-t^2D^2})=\frac{\pi^{\frac{p}{2}}m(p)}{t^p}\tau(x)+O(\frac1{t^{p-1}}).$$
\end{lemma}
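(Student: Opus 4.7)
The plan is to reduce the computation of ${\rm Tr}(\pi(x)e^{-t^2D^2})$ to a one-variable Gaussian sum by exploiting the Clifford relations of the Pauli matrices, which collapse $D^2$ into a pure tensor of the identity with the (commutative) Laplacian $\Delta = \sum_{k=1}^p D_k^2$, and then to apply Lemma \ref{pois lemma}\eqref{pois torus} coordinatewise.

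First I would square the Dirac operator. Writing $D^2 = \sum_{k,l} \gamma_k\gamma_l \otimes D_kD_l$ and using $D_kD_l = D_lD_k$ (immediate from the diagonal action of each $D_k$ on the basis $U_1^{n_1}\cdots U_p^{n_p}$) together with the anticommutation $\gamma_k\gamma_l + \gamma_l\gamma_k = 2\delta_{kl}\cdot 1$, the double sum symmetrizes to $D^2 = 1_{\mathbb{C}^{m(p)}} \otimes \Delta$. Hence $e^{-t^2D^2} = 1 \otimes e^{-t^2\Delta}$, and since $\pi(x) = 1 \otimes M_x$, the trace factors as
$$
{\rm Tr}(\pi(x)e^{-t^2D^2}) = m(p)\cdot {\rm Tr}(M_x e^{-t^2\Delta}).
$$

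Next I would evaluate the scalar trace using the orthonormal family $\{U^n := U_1^{n_1}\cdots U_p^{n_p}\}_{n \in \mathbb{Z}^p}$ in $L_2(\mathbb{T}^p_\Theta,\tau)$, which simultaneously diagonalizes every $D_k$ (with eigenvalue $n_k$) and hence $\Delta$ (with eigenvalue $|n|^2$). Using the inner product $\langle \xi,\eta\rangle = \tau(\xi\eta^*)$ and the identity $U^n(U^n)^* = 1$, we find $\langle M_x U^n, U^n\rangle = \tau(x U^n (U^n)^*) = \tau(x)$, independent of $n$. Since $e^{-t^2\Delta}$ is trace class for every $t>0$, summing over the basis gives
$$
{\rm Tr}(M_x e^{-t^2\Delta}) = \tau(x) \sum_{n \in \mathbb{Z}^p} e^{-t^2|n|^2} = \tau(x) \Big(\sum_{l \in \mathbb{Z}} e^{-t^2 l^2}\Big)^p.
$$

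Finally I would invoke Lemma \ref{pois lemma}\eqref{pois torus}, which states $\sum_{l\in\mathbb{Z}} e^{-t^2 l^2} = \pi^{1/2}/t + O(1)$; binomially expanding the $p$-th power yields $\pi^{p/2}/t^p + O(1/t^{p-1})$, and multiplying by $m(p)\tau(x)$ delivers the claimed asymptotic. The only step requiring any genuine care is the Clifford collapse in the first stage, and this is mild provided we read ``Pauli matrices'' in the paper's sense as generators of the Euclidean Clifford algebra on $\mathbb{C}^{m(p)}$; everything after that is a routine heat-trace computation on a jointly diagonalizable family, with the noncommutativity of $\mathbb{T}^p_\Theta$ entering only through the trivial observation $U^n (U^n)^* = 1$.
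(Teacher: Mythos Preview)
Your proof is correct and follows essentially the same route as the paper: both reduce to the identity $D^2(e_m\otimes u_{\mathbf{k}})=|\mathbf{k}|^2\,e_m\otimes u_{\mathbf{k}}$ (which you justify explicitly via the Clifford anticommutation, while the paper simply states it), compute the diagonal matrix elements as $\tau(x)$, factor the lattice sum into a $p$-fold product, and invoke Lemma~\ref{pois lemma}\eqref{pois torus}.
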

\begin{proof} Let $\{u_{\mathbf{k}}\}_{\mathbf{k}\in\mathbb{Z}}$ be the standard basis in $L_2(\mathbb{T}^p_{\Theta},\tau),$ that is
$$u_{{\bf k}}=U_1^{k_1}U_2^{k_2}\cdots U_p^{k_p},\quad {\bf k}=(k_1,\cdots,k_p)\in\mathbb{Z}^p.$$
If $\{e_m\}_{m=1}^{m(p)}$ is the standard unit basis in $\mathbb{C}^{m(p)},$ then the elements $e_m\otimes u_{{\bf k}},$ $1\leq m\leq m(p),$ ${\bf k}\in\mathbb{Z}^p$ form an orthonormal basis in $\mathbb{C}^{m(p)}\otimes L_2(\mathbb{T}^p_{\Theta}).$ We have $(D^2)(e_m\otimes u_{\mathbf{k}})=|\mathbf{k}|^2e_m\otimes u_{\mathbf{k}}$ and $\langle \pi(x)(e_m\otimes u_{\mathbf{k}}),e_m\otimes u_{\mathbf{k}}\rangle=\tau(x)$ for every $x\in L_{\infty}(\mathbb{T}_{\theta}^p)$ and for every $1\leq m\leq p,$ $\mathbf{k}\in\mathbb{Z}^p.$ Hence,
$${\rm Tr}(\pi(x)e^{-t^2D^2})=\sum_{m=1}^{m(p)}\sum_{\mathbf{k}\in\mathbb{Z}^p}\langle\pi(x)e^{-t^2D^2}(e_m\otimes u_{\mathbf{k}}),e_m\otimes u_{\mathbf{k}}\rangle=$$
$$=m(p)\tau(x)\sum_{\mathbf{k}\in\mathbb{Z}^p}e^{-t^2|\mathbf{k}|^2}=m(p)\tau(x)(\sum_{k\in\mathbb{Z}}e^{-t^2k^2})^p.$$
The assertion follows now from Lemma \ref{pois lemma} \eqref{pois torus}.
\end{proof}

\begin{proof}[Proof of Corollary \ref{torus cor}] By Lemma \ref{torus lemma}, the noncommutative torus satisfies the Condition \ref{former regular}. The assertion follows now from Theorem \ref{fubini}.
\end{proof}

\subsection{Example: quantum group SU$_q(2)$}

In what follows, $\mathcal{O}(SU_q(2))$ is the algebraic linear span of all words in $a,c,a^*,c^*$ with the following cancellation rules\footnote{Here, we are using the notations from \cite{KSh}. The definition appears on p.102 in \cite{KSh}. The same definition is used by Connes (see formula (18) in \cite{Connes_suq2}) and Chakraborty-Pal (see p.2 in \cite{ch_pal}). However, those authors use the notation $\alpha$ for $a$ and $\beta$ for $c.$}
$$a^*a+c^*c=1,\ aa^*+q^2cc^*=1,\ ac=qca,\ ac^*=qc^*a,\ cc^*=c^*c.$$
Here, the parameter $q$ takes value from the interval $[-1,1].$ For $q=1,$ the algebra $\mathcal{O}(SU_q(2))$ is commutative and (its von Neumann envelope) equals to $L_{\infty}(SU(2)).$ In what follows, we assume $q\in(-1,1).$

It follows from Proposition IV.4 in \cite{KSh} that the elements
$$\{a^nc^m(c^*)^r,c^m(c^*)^r(a^*)^{n+1}\}_{m,n,r\in\mathbb{Z}_+}$$
form a Hamel basis in $\mathcal{O}(SU_q(2)).$ Define a linear functional\footnote{This is a trace on $\mathcal{O}(SU_q(2)),$ not the Haar state. However, we don't need the tracial property of $\tau.$ Connes (see Theorem 4 in \cite{Connes_suq2}) considered this functional on a larger algebra.} $\tau$ on $\mathcal{O}(SU_q(2))$ by setting
$$\tau(a^nc^m(c^*)^r)=\tau(c^m(c^*)^r(a^*)^n)=0,\quad (m,n,r)\neq(0,0,0),\quad \tau(1)=1.$$

The algebra $\mathcal{O}(SU_q(2))$ acts on the Hilbert space $H$ which is the completion of $\mathcal{O}(SU_q(2))$ with respect to the inner product $(x,y)\to h(xy^*).$ Here, $h$ is the Haar state on SU$_q(2)$ (defined in Theorem IV.14 in \cite{KSh}).

Let $l,m,n\in\frac12\mathbb{Z}_+$ be such that $l\geq0,$ $|m|,|n|\leq l$ and $l-m,l-n\in\mathbb{Z}.$ Let $t_{m,n}^l$ be the orthonormal basis in $\mathcal{O}(SU_q(2))$ constructed in Theorem IV.13 in \cite{KSh}. By construction of the Hilbert space $H,$ these elements also form an orthonormal basis in $H.$ Connes defined Dirac operator $D$ on the Hilbert space $H$ in \cite{Connes_suq2} (see formula (22) on p.8 there) by the formula
$$Dt_{m,n}^l=2l(2\delta_0(l-m)-1)t_{m,n}^l.$$
In this text, we are not interested in the sign of $D,$ but only in its absolute value given by the formula
$$|D|t_{m,n}^l=2lt_{m,n}^l.$$

\begin{lemma}\label{suq2 ok} For every $x\in \mathcal{O}(SU_q(2)),$ we have
$${\rm Tr}(M_xe^{-(tD)^2})=\frac{\pi^{\frac12}}{4t^3}\tau(x)+O(t^{-2}),\quad t\to0.$$
\end{lemma}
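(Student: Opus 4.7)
The plan is to use linearity and the Hamel basis $\{a^n c^m (c^*)^r\}\cup\{c^m (c^*)^r (a^*)^{n+1}\}$ in $\mathcal{O}(SU_q(2))$ to reduce the claim to one basis element at a time. For each Hamel basis element $x$, I would show that ${\rm Tr}(M_x e^{-(tD)^2})=\frac{\pi^{1/2}}{4t^3}\tau(x)+O(t^{-2})$; since $\tau$ vanishes on every Hamel basis element other than $1$, this amounts to (a) an explicit computation at $x=1$, and (b) showing ${\rm Tr}(M_x e^{-(tD)^2})=O(t^{-2})$ for all the others.

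For (a), the spectrum of $|D|$ is $\{2l:l\in\tfrac12\mathbb{Z}_+\}$ and the eigenspace at $2l$, spanned by $\{t_{m,n}^l:|m|,|n|\leq l\}$, has dimension $(2l+1)^2$. Substituting $k=2l$ converts the trace into $\sum_{k\geq 0}(k+1)^2 e^{-k^2 t^2}$; expanding $(k+1)^2=k^2+2k+1$ and applying Lemma \ref{pois lemma} parts (c), (a), (b) (each restricted to non-negative indices) yields the leading term $\frac{\pi^{1/2}}{4t^3}$ with remainder $O(t^{-2})$, matching $\tau(1)=1$.

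For (b), I would first exploit the $U(1)\times U(1)$ (dual torus) symmetry. The maximal torus of classical $SU(2)$ extends naturally to an action on $\mathcal{O}(SU_q(2))$, sending $a\mapsto e^{i(\alpha+\beta)/2}a$, $c\mapsto e^{i(\alpha-\beta)/2}c$ and similarly on the adjoints; it preserves the Haar state $h$ and is implemented on $H$ by unitaries that act diagonally on the basis $\{t_{m,n}^l\}$ with eigenvalues depending only on $(m,n)$. These unitaries commute with $D$ (also diagonal in this basis), so conjugating the trace and averaging over the torus forces ${\rm Tr}(M_x e^{-(tD)^2})$ to vanish on every Hamel basis element of nonzero weight. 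A direct count shows the only zero-weight elements are $(cc^*)^m$ for $m\in\mathbb{Z}_+$; the second Hamel family contributes no zero-weight elements, because the unavoidable factor $(a^*)^{n+1}$ precludes cancellation of the weights.

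The main obstacle is then to show ${\rm Tr}(M_{(cc^*)^m}e^{-(tD)^2})=O(t^{-2})$ for each $m\geq 1$, since $\tau((cc^*)^m)=0$ there. Because $cc^*$ is a degree-$2$ zero-weight element lying in $V_0\oplus V_1$, $M_{cc^*}$ preserves the joint torus eigenspaces and shifts the spin $l$ by at most one; $M_{(cc^*)^m}$ accordingly shifts $l$ by at most $m$. What I would need is the bound ${\rm Tr}(P_l M_{(cc^*)^m}P_l)=O(l)$ (linear, not quadratic, in $l$), after which Lemma \ref{pois lemma} (a) yields $\sum_l l\cdot e^{-4l^2 t^2}=O(t^{-2})$ as required. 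The intuition is that $\tau((cc^*)^m)=0$ must precisely cancel the leading $l^2$ contribution from the diagonal of $M_{(cc^*)^m}$; making this cancellation rigorous is the technically heavy part and would proceed via either the quantum Clebsch--Gordan formulas or the Vaksman--Soibelman representation (in which $cc^*$ acts diagonally with eigenvalues $q^{2n}$), so as to compute the diagonal matrix elements $\langle (cc^*)^m\cdot t_{m',n'}^l,t_{m',n'}^l\rangle$ explicitly and sum them over $|m'|,|n'|\leq l$.
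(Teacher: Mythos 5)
Your overall structure — decompose with respect to the bi-torus weight grading, show the trace vanishes on nonzero-weight elements, and then estimate the zero-weight part explicitly — is precisely the approach the paper takes (the spaces $\mathcal{A}[m,n]$ from Klimyk--Schm\"udgen are exactly your torus weight spaces, and the orthogonality giving vanishing on nonzero weight is the paper's Step 1). Your computation for $x=1$ is correct and matches the paper. However, you explicitly leave unproved the one estimate that carries all the analytic content: that ${\rm Tr}(p_l M_{(cc^*)^j} p_l)=O(l)$ rather than the naive $O(l^2)$. You flag this as ``the technically heavy part,'' and indeed the lemma is not proved without it.

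Two comments on what is missing. First, the mechanism you invoke — that $\tau((cc^*)^m)=0$ must ``precisely cancel the leading $l^2$ contribution'' — does not describe what actually happens. There is no cancellation of opposite signs: $cc^*\geq0$, so the diagonal matrix elements $\langle cc^* t^l_{m,n},t^l_{m,n}\rangle$ are all nonnegative. What actually happens, and what the paper extracts from the explicit formulae in Chakraborty--Pal (formulae (2.1)--(2.9)) or Connes (formulae (19)--(21)), is that these diagonal entries decay like $O(q^{m+l})+O(q^{n+l})$. Because $|q|<1$, only $O(l)$ of the $(2l+1)^2$ diagonal entries are non-negligible — the geometric sums in $m$ and $n$ over $[-l,l]$ are bounded. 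This is why the answer is $O(l)$, and it depends essentially on $|q|<1$; it is not a consequence of any linear functional vanishing. Second, once ${\rm Tr}(p_l M_{cc^*}p_l)=O(l)$ is known, the paper disposes of all higher powers at once via the operator inequality $0\leq (cc^*)^j\leq\|cc^*\|_\infty^{j-1}\,cc^*$, so ${\rm Tr}(p_l M_{(cc^*)^j}p_l)\leq\|cc^*\|_\infty^{j-1}{\rm Tr}(p_l M_{cc^*}p_l)=O(l)$; you would need a similar reduction, or a separate diagonal computation for each $j$. Until the $O(l)$ bound is actually established from the explicit matrix elements (or from the Vaksman--Soibelman picture, as you suggest), the argument is incomplete.
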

\begin{proof} {\bf Step 1:} Let $j\in\frac12\mathbb{Z}_+$ and let $(r,s)\neq(0,0).$ We claim that
$${\rm Tr}(M_{t^j_{r,s}}e^{-(tD)^2})=0.$$

Let $\mathcal{A}[\cdot,\cdot]$ be the linear subspace in $\mathcal{O}(SU_q(2))$ defined on p.105 in \cite{KSh}. By Lemma IV.11 in \cite{KSh}, we have $t^j_{r,s}\in\mathcal{A}[-2r,-2s]$ and $t^l_{m,n}\in\mathcal{A}[-2m,-2n].$ Using formula (26) on p.105 in \cite{KSh}, we infer that $t^j_{r,s}t^l_{m,n}\in\mathcal{A}[-2r-2m,-2s-2n].$ It follows now from formulae (47) and (56) in \cite{KSh} that $t^j_{r,s}t^l_{m,n}$ is orthogonal to $t^l_{m,n}$ (because $(r,s)\neq(0,0)$). Thus, 
$${\rm Tr}(M_{t^j_{r,s}}e^{-(tD)^2})=\sum_{l,m,n}e^{-4t^2l^2}\langle t^j_{r,s}t^l_{m,n},t^l_{m,n}\rangle=0.$$

{\bf Step 2:} Let $j\in\mathbb{Z}_+.$ We claim that
$${\rm Tr}(M_{(cc^*)^j}e^{-(tD)^2})=O(t^{-2}).$$

In what follows, $p_l=E_{|D|}\{2l\}.$ Using formulae (2.1)--(2.9) in \cite{ch_pal} (or formulae (19)--(21) in \cite{Connes_suq2}), we infer that
$$(p_lM_{cc^*}p_l)t^l_{m,n}=c(m,n,l)t^l_{m,n},$$
where
$$c(m,n,l)=O(q^{m+l})+O(q^{n+l}).$$
Taking into account that $|q|<1,$ we obtain
$${\rm Tr}(p_lM_{cc^*}p_l)=\sum_{m,n=-l}^lc(m,n,l)=\sum_{m,n=-l}^l\Big(O(q^{m+l})+O(q^{n+l})\Big)=O(l).$$

It follows that
$${\rm Tr}(M_{(cc^*)^j}p_l)={\rm Tr}(p_lM_{(cc^*)^j}p_l)\leq \|b\|_{\infty}^{2j-2}{\rm Tr}(p_lM_{cc^*}p_l)=O(l).$$
Hence,
$${\rm Tr}(M_{(cc^*)^j}e^{-(tD)^2})=\sum_{l\in\frac12\mathbb{Z}_+}e^{-4t^2l^2}{\rm Tr}(M_{(cc^*)^j}p_l)=\sum_{l\in\frac12\mathbb{Z}_+}e^{-4t^2l^2}\cdot O(l).$$
Thus,
$$|{\rm Tr}(M_{(cc^*)^j}e^{-(tD)^2})|\leq O(1)\cdot\Big(\sum_{l\in\frac12\mathbb{Z}_+}le^{-4l^2t^2}\Big)=O(1)\cdot\Big(\sum_{l\in\mathbb{Z}_+}le^{-l^2t^2}\Big).$$
Replacing the sum with an integral, we conclude the proof in Step 2.

{\bf Step 3:} Let $r,s\in\frac12\mathbb{Z}$ be such that $(r,s)\neq(0,0).$ By Step 1 and formula (47) in \cite{KSh}, we have
$${\rm Tr}(M_xe^{-(tD)^2})=0,\quad x\in\mathcal{A}[-2r,-2s].$$
By Step 2 and Proposition 10 (i) in \cite{KSh}, we have
$${\rm Tr}(M_xe^{-(tD)^2})=O(t^{-2}),\quad x\in\mathcal{A}[0,0],\ \tau(x)=0.$$
A trivial computation shows that
$${\rm Tr}(e^{-(tD)^2})=\sum_{l\in\frac12\mathbb{Z}_+}(2l+1)^2e^{-4l^2t^2}=O(t^{-2})+\frac12\sum_{l\in\mathbb{Z}}l^2e^{-l^2t^2}.$$
It follows from Lemma \ref{pois lemma} \eqref{pois suq2} that
$${\rm Tr}(e^{-(tD)^2})=\frac{\pi^{\frac12}}{4t^3}+O(t^{-2}).$$
Thus, for every $m,n\in\mathbb{Z},$ we have
$${\rm Tr}(M_xe^{-(tD)^2})=\frac{\pi^{\frac12}}{4t^3}\tau(x)+O(t^{-2}),\quad x\in\mathcal{A}[m,n].$$
The assertion follows now from formula (32) in \cite{KSh}.
\end{proof}

\begin{proof}[Proof of Corollary \ref{sphere suq2 corollary}] By Lemma \ref{sphere ok} and Lemma \ref{suq2 ok}, spectral triples corresponding to the sphere $\mathbb{S}^2$ and to the quantum group SU$_q(2)$ satisfy the Condition \ref{former regular}. The assertion follows now from Theorem \ref{fubini}. 
\end{proof}

\section{Proof of Theorem \ref{hard example}}

The following lemma provides a convenient formula for the sum of the first $n$ eigenvalues of ${\rm diag}(x)\in\mathcal{L}_{1,\infty}.$

\begin{lemma}\label{kalton} If $x\in l_{\infty}$ is such that $|x(k)|\leq\frac1{k+1},$ $k\geq0,$ then
$$\sum_{m=0}^n\lambda(m,x)=\sum_{k=0}^nx(k)+O(1).$$
\end{lemma}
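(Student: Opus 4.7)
The plan is to write $\lambda(k,x) = x(\sigma(k))$ for a permutation $\sigma$ of $\mathbb{Z}_+$ that sorts the entries by non-increasing absolute value, and to measure the error introduced by this rearrangement. As a preliminary, I would verify that the envelope survives the sorting: since $|x(j)| \le \frac{1}{j+1}$, for any $k \ge 0$ we have $\#\{j : |x(j)| > \frac{1}{k+1}\} \le \#\{j : \frac{1}{j+1} > \frac{1}{k+1}\} = k$, hence $\mu(k,x) = |\lambda(k,x)| \le \frac{1}{k+1}$ as well.

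Now set $T_n = \{0,1,\ldots,n\}$ and $S_n = \sigma(T_n)$. Both sets have cardinality $n+1$, so $d_n := |S_n \setminus T_n| = |T_n \setminus S_n| \le n+1$, and
\begin{equation*}
\sum_{k=0}^n \lambda(k,x) - \sum_{k=0}^n x(k) = \sum_{j \in S_n \setminus T_n} x(j) - \sum_{j \in T_n \setminus S_n} x(j).
\end{equation*}
It therefore suffices to bound each of the two sums on the right by $O(1)$.

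For the first sum, every $j \in S_n \setminus T_n$ satisfies $j \ge n+1$, so $|x(j)| \le \frac{1}{n+2}$; with $d_n \le n+1$ such terms, the absolute value of the sum is at most $\frac{d_n}{n+2} \le 1$. For the second sum, $j \in T_n \setminus S_n$ forces $\sigma^{-1}(j) \ge n+1$, whence $|x(j)| = \mu(\sigma^{-1}(j),x) \le \mu(n+1,x) \le \frac{1}{n+2}$; again the sum of $d_n$ such terms is bounded by $1$. Adding the two contributions yields the claimed $O(1)$ estimate.

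The only conceptual step — and hardly an obstacle — is recognising that the cardinality balance $|S_n \setminus T_n| = |T_n \setminus S_n|$, coupled with the pointwise envelope on $|x|$, forces both the ``pulled-in'' indices beyond $T_n$ and the ``pushed-out'' values inside $T_n$ to be controlled at the same scale $\frac{1}{n+2}$. Neither a summability assumption on $x$ nor any cancellation from signs is required; only the pointwise bound and the matching of set sizes are used, which explains why the error term is uniformly $O(1)$.
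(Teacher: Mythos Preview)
Your argument is correct. The only cosmetic point is the assumption that $\sigma$ is a genuine permutation of $\mathbb{Z}_+$: if infinitely many entries $x(j)$ vanish while infinitely many do not, some conventions for the eigenvalue sequence list only the nonzero values and $\sigma$ is then merely an injection. This does not affect your bounds, since for any $j\in T_n\setminus S_n$ with $x(j)=0$ the estimate $|x(j)|\le\frac{1}{n+2}$ is trivial, and the rest of the argument only uses that $S_n$ has cardinality $n+1$.

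Your route differs from the paper's. The paper first treats the case $x\ge 0$ by the same ``top $n+1$ indices'' counting you use, but then passes to complex $x$ by splitting $x=x_1+ix_2-x_3-ix_4$ into four nonnegative parts and invoking an external result (Lemma~5.7.5 in \cite{LSZ}) asserting that $\sum_{m=0}^n\lambda(m,x)=\sum_{p=1}^4 i^{p-1}\sum_{m=0}^n\lambda(m,x_p)+O(1)$. Your proof bypasses this decomposition entirely: the symmetry $|S_n\setminus T_n|=|T_n\setminus S_n|$ together with the two envelope bounds (on indices $j>n$, and on ranks $\sigma^{-1}(j)>n$) handles arbitrary complex entries in one stroke. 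The gain is self-containment---no appeal to the cited lemma on eigenvalue sums of linear combinations---at no cost in length or difficulty.
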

\begin{proof} Suppose first that $x\geq0.$ It is clear that
$$\sum_{k=0}^nx(k)\leq\sum_{k=0}^n\mu(k,x).$$
On the other hand, there exists a set $A_n\subset\mathbb{Z}_+$ such that $|A|=n+1$ and such that
$$\sum_{k=0}^n\mu(k,x)=\sum_{k\in A_n}x(k)=\sum_{k\in A_n,k\leq n}x(k)+\sum_{k\in A_n,k\geq n}x(k)\leq$$
$$\leq\sum_{k=0}^nx(k)+\sum_{k\in A_n,k\geq n}\frac1{k+1}\leq\sum_{k=0}^nx(k)+\sum_{k=n+1}^{2n+1}\frac1{k+1}\leq\sum_{k=0}^nx(k)+1.$$
A combination of the latter estimates yields the assertion under the additional assumption that $x\geq0.$

For an arbitrary $x\in l_{1,\infty},$ there exist $0\leq x_p\in l_{1,\infty},$ $1\leq p\leq 4,$ such that
$$x=x_1+ix_2+i^2x_3+i^3x_4.$$
If $|x(k)|\leq\frac1{k+1},$ $k\geq0,$ then also $x_p(k)\leq\frac1{k+1},$ $k\geq0.$ It follows from Lemma 5.7.5 in \cite{LSZ} that
$$\sum_{m=0}^n\lambda(m,x)=\sum_{p=1}^4i^{p-1}\sum_{m=0}^n\lambda(m,x_p)+O(1).$$
Applying the assertion for positive operators $x_p,$ $1\leq p\leq 4,$ we infer that
$$\sum_{m=0}^n\lambda(m,x)=\sum_{p=1}^4i^{p-1}\sum_{k=0}^nx_p(k)+O(1)=\sum_{k=0}^nx(k)+O(1).$$
This concludes the proof.
\end{proof}

\begin{lemma}\label{double kalton} If $x\in l_{\infty}(\mathbb{Z}^2)$ is such that $|x(k,l)|\leq\frac1{1+k^2+l^2},$ $k,l\geq0,$ then
$$\sum_{m=0}^n\lambda(m,x)=\sum_{k,l=0}^{\floor{n^{1/2}}}x(k,l)+O(1).$$
\end{lemma}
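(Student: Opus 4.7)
The plan is to adapt the proof of Lemma \ref{kalton} to the two-dimensional setting, with the square $B_n=\{0,1,\ldots,\lfloor n^{1/2}\rfloor\}^2$ playing the role of the interval $\{0,\ldots,n\}$ from dimension one. The key heuristic is that the pointwise bound $|x(k,l)|\leq 1/(1+k^2+l^2)$ forces the largest values of $|x|$ to concentrate near the origin, essentially inside $B_n$, so that the sum of the top eigenvalues and the sum over $B_n$ differ only by boundary terms of size $O(1)$.

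First I would reduce to the case $x\geq 0$ in the same manner as in Lemma \ref{kalton}: write $x=\sum_{p=1}^4 i^{p-1}x_p$ with $0\leq x_p(k,l)\leq 1/(1+k^2+l^2)$, and invoke Lemma 5.7.5 in \cite{LSZ} to absorb the passage from singular values to eigenvalues into the $O(1)$ error.

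For $x\geq 0$, set $N+1:=|B_n|=(\lfloor n^{1/2}\rfloor+1)^2$, so that $n+1\leq N+1\leq n+2\sqrt{n}+1$. Choose $A_n\subset\mathbb{Z}_+^2$ with $|A_n|=N+1$ realising $\sum_{m=0}^N\mu(m,x)=\sum_{(k,l)\in A_n}x(k,l)$, and note that $|B_n\setminus A_n|=|A_n\setminus B_n|\leq N+1$. Two simple observations control the symmetric difference: any $(k,l)\notin B_n$ satisfies $k^2+l^2>n$ and therefore $x(k,l)\leq 1/(1+n)$, which gives $\sum_{(k,l)\in A_n\setminus B_n}x(k,l)\leq(N+1)/(n+1)=O(1)$; meanwhile the decay hypothesis implies $\#\{(k,l):x(k,l)\geq s\}=O(1/s)$, whence $\mu(m,x)=O(1/m)$, so $\sum_{(k,l)\in B_n\setminus A_n}x(k,l)\leq(N+1)\mu(N,x)=O(1)$. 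Adding these estimates yields $\sum_{m=0}^N\mu(m,x)=\sum_{(k,l)\in B_n}x(k,l)+O(1)$, and finally replacing $N$ by $n$ costs at most $(N-n)\mu(n,x)=O(\sqrt{n}/n)=O(1)$.

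The main obstacle I anticipate is the symmetric-difference bookkeeping: one must resist the temptation to estimate $\sum_{(k,l)\notin B_n}x(k,l)$ unconditionally (which diverges like $\log n$), and instead exploit the cancellation between the size $|A_n\triangle B_n|=O(n)$ of the mismatched set and the pointwise bound $O(1/n)$ available near its boundary, which together make the product exactly $O(1)$. Everything else is routine arithmetic along the lines of Lemma \ref{kalton}.
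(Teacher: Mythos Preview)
Your argument is correct, but it proceeds differently from the paper. The paper does \emph{not} redo the set-difference argument in two dimensions; instead it fixes a bijection $\alpha_2:\mathbb{Z}_+\to\mathbb{Z}_+^2$ with $m\mapsto|\alpha_2(m)|$ increasing (furnished by Lemma~\ref{nte2}), sets $z=x\circ\alpha_2$, and applies the one-dimensional Lemma~\ref{kalton} directly to $z$, obtaining $\sum_{m\leq n}\lambda(m,x)=\sum_{m\leq n}x(\alpha_2(m))+O(1)$. The remaining work is then purely geometric: replace the set $\{\alpha_2(0),\dots,\alpha_2(n)\}$ first by the quarter-disk $\{|\mathbf{k}|\leq|\alpha_2(n)|\}$, then by $\{|\mathbf{k}|^2\leq n\}$, and finally by the square $B_n$, each time bounding the symmetric difference using the pointwise decay and the asymptotics $|\alpha_2(n)|^2\asymp n$ from Lemma~\ref{nte2}. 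Your route is more self-contained---it needs only the crude estimate $\#\{(k,l):k^2+l^2\leq R\}=O(R)$ rather than the sharper Lemma~\ref{nte2}, and it avoids the bijection altogether---while the paper's route is more modular, reusing Lemma~\ref{kalton} as a black box and generalising verbatim to $\mathbb{Z}_+^p$ by swapping $\alpha_2$ for $\alpha_p$.
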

\begin{proof} Define a bijection $\alpha_2:\mathbb{Z}_+\to\mathbb{Z}_+^2$ as in Lemma \ref{nte2}. Define $z\in l_{\infty}$ by setting $z=x\circ\alpha_2.$  It follows from Lemma \ref{nte2} that
$$|z(m)|\leq\frac1{1+|\alpha_2(m)|^2}\leq\frac{{\rm const}}{m+1},\quad m\geq0.$$
Therefore,
$$\sum_{m=0}^n\lambda(m,x)=\sum_{m=0}^n\lambda(m,z)\stackrel{L.\ref{kalton}}{=}\sum_{m=0}^nz(m)+O(1)=\sum_{m=0}^nx(\alpha_2(m))+O(1).$$
Note that
$$\Big|\sum_{m=0}^nx(\alpha_2(m))-\sum_{\substack{\mathbf{k}\in\mathbb{Z}_+^2\\|\mathbf{k}|\leq|\alpha_2(n)|}}x(\mathbf{k})\Big|\leq\sum_{\substack{\mathbf{k}\in\mathbb{Z}_+^2\\|\mathbf{k}|=|\alpha_2(n)|}}|x(\mathbf{k})|\leq\sum_{\substack{\mathbf{k}\in\mathbb{Z}_+^2\\|\mathbf{k}|=|\alpha_2(n)|}}\frac1{1+|\mathbf{k}|^2}=$$
$$=\frac1{1+|\alpha_2(n)|^2}\sum_{\substack{\mathbf{k}\in\mathbb{Z}_+^2\\|\mathbf{k}|=|\alpha_2(n)|}}1\leq\frac1{1+|\alpha_2(n)|^2}\sum_{k=0}^{|\alpha_2(n)|}1\leq 1.$$
It follows from Lemma \ref{nte2} that
$$\Big|\sum_{\substack{\mathbf{k}\in\mathbb{Z}_+^2\\|\mathbf{k}|\leq|\alpha_2(n)|}}x(\mathbf{k})-\sum_{\substack{\mathbf{k}\in\mathbb{Z}_+^2\\|\mathbf{k}|^2\leq n}}x(\mathbf{k})\Big|\leq\sum_{\substack{\mathbf{k}\in\mathbb{Z}_+^2\\|\mathbf{k}|^2\in[|\alpha_2(n)|^2,n]}}|x(\mathbf{k})|\leq\sum_{\substack{\mathbf{k}\in\mathbb{Z}_+^2\\|\mathbf{k}|^2\in[c_1n,c_2n]}}\frac1{1+|\mathbf{k}|^2}=O(1).$$
We also have
$$\Big|\sum_{k_1^2+k_2^2\leq n}x(k_1,k_2)-\sum_{0\leq k_1,k_2\leq n^{1/2}}x(k_1,k_2)\Big|\leq\sum_{\substack{0\leq k_1,k_2\leq n^{1/2}\\ k_1^2+k_2^2>n}}|x(k_1,k_2)|\leq$$
$$\leq\sum_{\substack{0\leq k_1,k_2\leq n^{1/2}\\ k_1^2+k_2^2>n}}\frac1{1+k_1^2+k_2^2}\leq\frac1{1+n}\sum_{0\leq k_1,k_2\leq n^{1/2}}1=O(1).$$
A combination of the latter estimates yields the assertion.
\end{proof}

For a given $\theta\in\mathbb{R},$ we define $x_{\theta}\in l_{\infty}$ by setting $x_{\theta}(0)=1$ and
$$x_{\theta}(k)=e^{in\theta},\quad k\in[2^n,2^{n+1}),\quad n\geq0.$$
Let $D={\rm diag}(\{k\}_{k\geq0}),$ $U_{\theta}={\rm diag}(\{x_{\theta}(k)\}_{k\geq0}).$ Clearly, $(1+D^2)^{-1/2}\in\mathcal{L}_{1,\infty}.$

\begin{lemma}\label{degt lemma} For every $\theta\notin 2\pi\mathbb{Z},$ we have $U_{\theta}D^{-1}\in[\mathcal{L}_{1,\infty},\mathcal{L}(H)].$
\end{lemma}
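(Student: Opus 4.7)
The plan is to reduce the commutator-subspace membership to a summability estimate via Theorem \ref{spectral}, and then exploit the dyadic step-function structure of $x_\theta$ together with a geometric series bound.

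First I would invoke Theorem \ref{spectral}, which tells us $U_\theta D^{-1}\in[\mathcal{L}_{1,\infty},\mathcal{L}(H)]$ if and only if
$$\sum_{m=0}^n\lambda(m,U_\theta D^{-1})=O(1),\quad n\to\infty.$$
The operator $U_\theta D^{-1}$ is diagonal with entries $y(k)=x_\theta(k)/k$ for $k\geq 1$ (and $y(0)=0$, since $D^{-1}$ is to be interpreted as the Moore--Penrose pseudoinverse). Since $|y(k)|\leq 2/(k+1)$, Lemma \ref{kalton} (applied to $y/2$ and rescaled) gives
$$\sum_{m=0}^n\lambda(m,U_\theta D^{-1})=\sum_{k=1}^n\frac{x_\theta(k)}{k}+O(1).$$
So the task reduces to bounding $\sum_{k=1}^n x_\theta(k)/k$ uniformly in $n$.

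Next I would exploit the dyadic definition of $x_\theta$. On each block $k\in[2^j,2^{j+1})$ the sequence is constant, equal to $e^{ij\theta}$, so
$$\sum_{k=2^j}^{2^{j+1}-1}\frac{x_\theta(k)}{k}=e^{ij\theta}\sum_{k=2^j}^{2^{j+1}-1}\frac{1}{k}=e^{ij\theta}\bigl(\log 2+O(2^{-j})\bigr),$$
by comparison of the Riemann sum with $\int_{2^j}^{2^{j+1}}dt/t=\log 2$. Summing over the complete blocks up to $n=2^{N+1}-1$ yields
$$\sum_{k=1}^{2^{N+1}-1}\frac{x_\theta(k)}{k}=\log 2\cdot\sum_{j=0}^N e^{ij\theta}+O(1).$$

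Finally, the hypothesis $\theta\notin 2\pi\mathbb{Z}$ ensures $e^{i\theta}\neq 1$, so the geometric sum is bounded:
$$\Bigl|\sum_{j=0}^N e^{ij\theta}\Bigr|=\Bigl|\frac{1-e^{i(N+1)\theta}}{1-e^{i\theta}}\Bigr|\leq\frac{2}{|1-e^{i\theta}|}.$$
For a general $n$ lying inside a dyadic block $[2^N,2^{N+1})$, the tail sum $\sum_{k=2^N}^n x_\theta(k)/k$ is bounded in absolute value by $\sum_{k=2^N}^{2^{N+1}-1}1/k=\log 2+O(2^{-N})$, so the $O(1)$ bound persists. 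Combining everything, $\sum_{m=0}^n\lambda(m,U_\theta D^{-1})=O(1)$, and Theorem \ref{spectral} gives the desired conclusion.

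There is no serious obstacle; the only nuisances are the trivial interpretation of $D^{-1}$ at the zero eigenvalue and the handling of $n$ that is not of the form $2^{N+1}-1$, both of which contribute only $O(1)$ errors. The substantive point is that the dyadic plateaus of $x_\theta$ convert the sum into a geometric series whose boundedness is precisely the condition $\theta\notin 2\pi\mathbb{Z}$.
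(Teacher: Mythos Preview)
Your proposal is correct and follows essentially the same approach as the paper: reduce via Theorem~\ref{spectral} and Lemma~\ref{kalton} to bounding $\sum_{k=1}^{n}x_\theta(k)/k$, then break into dyadic blocks and use that the resulting geometric series in $e^{i\theta}$ is bounded precisely when $\theta\notin 2\pi\mathbb{Z}$. The only cosmetic difference is that the paper passes through $(1+D^2)^{-1/2}$ rather than $D^{-1}$, but since these differ by a trace-class (hence commutator-subspace) perturbation, the two versions are equivalent.
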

\begin{proof} We have
$$\sum_{k=1}^{2^{n+1}-1}\frac{x_{\theta}(k)}{k}=\sum_{m=0}^ne^{im\theta}\sum_{k=2^m}^{2^{m+1}-1}\frac1k=\sum_{m=0}^ne^{im\theta}(\log(2)+O(2^{-m}))=$$
$$=O(1)+\log(2)\frac{e^{i(n+1)\theta}-e^{i\theta}}{e^{i\theta}-1}=O(1).$$
Hence, for every $m\geq1,$ we have
$$\sum_{k=0}^m\frac{x_{\theta}(k)}{(1+k^2)^{1/2}}=O(1).$$
By Lemma \ref{kalton}, we have
$$\sum_{k=0}^m\lambda(U_{\theta}(1+D^2)^{-1/2})=O(1).$$
The assertion follows from Theorem \ref{spectral}.
\end{proof}

\begin{lemma}\label{compute lemma} For every $m_1,m_2\in\mathbb{Z}_+,$ we have
$$\sum_{k=2^{m_1}}^{2^{m_1+1}-1}\sum_{l=2^{m_2}}^{2^{m_2+1}-1}\frac1{k^2+l^2}=\Xi(|m_1-m_2|)+O(\min\{2^{-m_1},2^{-m_2}\}).$$
Here,
\begin{equation}\label{alfa def}
\Xi(m)=\int_1^2\int_{2^m}^{2^{m+1}}\frac{dtds}{t^2+s^2},\quad m\in\mathbb{Z}.
\end{equation}
\end{lemma}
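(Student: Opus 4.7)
\medskip

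\noindent\textbf{Proof plan for Lemma \ref{compute lemma}.}

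The plan is to realise the double sum as a midpoint/lower Riemann sum for the integral
$$J(m_1,m_2)=\int_{2^{m_1}}^{2^{m_1+1}}\int_{2^{m_2}}^{2^{m_2+1}}\frac{dt\,ds}{t^2+s^2},$$
to verify by scaling that $J(m_1,m_2)=\Xi(|m_1-m_2|)$, and then to bound the discrepancy between sum and integral.

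First I would reduce to $m_1\le m_2$ by symmetry of the integrand $1/(t^2+s^2)$ in $(t,s)$, so that $|m_1-m_2|=m_2-m_1$ and $\min\{2^{-m_1},2^{-m_2}\}=2^{-m_2}$. The scaling identity comes from the change of variable $t=2^{m_1}t'$, $s=2^{m_1}s'$ in $J(m_1,m_2)$: the Jacobian $2^{2m_1}$ cancels with the factor $2^{-2m_1}$ coming out of $1/(t^2+s^2)$, and the limits of integration become $[1,2]\times[2^{m_2-m_1},2^{m_2-m_1+1}]$, producing exactly $\Xi(m_2-m_1)$.

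Next I would write the sum as an integral of a piecewise constant function,
$$S(m_1,m_2):=\sum_{k=2^{m_1}}^{2^{m_1+1}-1}\sum_{l=2^{m_2}}^{2^{m_2+1}-1}\frac1{k^2+l^2}=\int_{2^{m_1}}^{2^{m_1+1}}\int_{2^{m_2}}^{2^{m_2+1}}\frac{dt\,ds}{\lfloor t\rfloor^2+\lfloor s\rfloor^2},$$
so that
$$|S(m_1,m_2)-J(m_1,m_2)|\le\int_{2^{m_1}}^{2^{m_1+1}}\int_{2^{m_2}}^{2^{m_2+1}}\frac{(t^2+s^2)-(\lfloor t\rfloor^2+\lfloor s\rfloor^2)}{(\lfloor t\rfloor^2+\lfloor s\rfloor^2)(t^2+s^2)}\,dt\,ds.$$
The numerator is $\le 2(t+s)\le 4\cdot 2^{m_2+1}$, while the denominator is bounded below by $2^{4m_2}$ since $\lfloor s\rfloor^2\ge 2^{2m_2}$ and $t^2+s^2\ge 2^{2m_2}$. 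Multiplying by the area $2^{m_1}\cdot 2^{m_2}$ of the rectangle of integration yields
$$|S(m_1,m_2)-J(m_1,m_2)|=O\bigl(2^{m_1}\cdot 2^{m_2}\cdot 2^{m_2}\cdot 2^{-4m_2}\bigr)=O(2^{m_1-2m_2})\le O(2^{-m_2}),$$
using $m_1\le m_2$ at the last step. Combined with $J(m_1,m_2)=\Xi(m_2-m_1)$, this gives the claimed estimate.

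The proof is essentially routine once the scaling identity is in hand; the only place that requires care is the discrepancy estimate, where one has to notice that the naive bound $|1/(\lfloor t\rfloor^2+\lfloor s\rfloor^2)-1/(t^2+s^2)|\le 1/(\lfloor t\rfloor^2+\lfloor s\rfloor^2)$ is too weak and one must instead exploit the telescoping identity above (or equivalently apply the mean value theorem with $|\nabla(1/(t^2+s^2))|=O((t^2+s^2)^{-3/2})$). This is the only non-trivial step.
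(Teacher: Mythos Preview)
Your proof is correct and follows essentially the same approach as the paper: compare the double sum to the double integral $J(m_1,m_2)$, identify $J(m_1,m_2)=\Xi(|m_1-m_2|)$ by scaling, and bound the discrepancy by $O(\min\{2^{-m_1},2^{-m_2}\})$. The only cosmetic difference is that the paper handles the two variables separately using the monotonicity of $t\mapsto 1/(t^2+k^2)$ (so each one-dimensional sum differs from its integral by a single endpoint term), whereas you treat both variables at once via $\lfloor\cdot\rfloor$ and the identity $1/a-1/b=(b-a)/(ab)$; both routes yield the same error bound.
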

\begin{proof} It is clear that
$$\int_{2^{m_2}}^{2^{m_2+1}}\frac{dt}{t^2+k^2}\leq\sum_{l=2^{m_2}}^{2^{m_2+1}-1}\frac{1}{k^2+l^2}\leq \int_{2^{m_2}}^{2^{m_2+1}}\frac{dt}{t^2+k^2}+\frac{1}{k^2+2^{2m_2}}.$$
Thus,
$$\sum_{l=2^{m_2}}^{2^{m_2+1}-1}\frac{1}{k^2+l^2}=\int_{2^{m_2}}^{2^{m_2+1}}\frac{dt}{t^2+k^2}+\frac{O(1)}{k^2+2^{2m_2}}.$$
It follows that
$$\sum_{k=2^{m_1}}^{2^{m_1+1}-1}\sum_{l=2^{m_2}}^{2^{m_2+1}-1}\frac{1}{k^2+l^2}=\int_{2^{m_2}}^{2^{m_2+1}}\sum_{k=2^{m_1}}^{2^{m_1+1}-1}\frac{dt}{t^2+k^2}+O(1)\cdot\sum_{k=2^{m_1}}^{2^{m_1+1}-1}\frac1{k^2+2^{2m_2}}.$$
Repeating the argument, we obtain that
$$\sum_{k=2^{m_1}}^{2^{m_1+1}-1}\sum_{l=2^{m_2}}^{2^{m_2+1}-1}\frac{1}{k^2+l^2}=\int_{2^{m_2}}^{2^{m_2+1}}\int_{2^{m_1}}^{2^{m_1+1}}\frac{dtds}{t^2+s^2}+$$
$$+O(1)\cdot\int_{2^{m_2}}^{2^{m_2+1}}\frac{dt}{t^2+2^{2m_1}}+O(1)\cdot\int_{2^{m_1}}^{2^{m_1+1}}\frac{ds}{s^2+2^{2m_2}}+\frac{O(1)}{2^{2m_1}+2^{2m_2}}.$$
Clearly, the second and third integrals above can be estimated as
$$O(1)\cdot\frac{2^{m_2}}{2^{2m_1}+2^{2m_2}},\quad O(1)\cdot\frac{2^{m_1}}{2^{2m_1}+2^{2m_2}}.$$
The reference to \eqref{alfa def} concludes the proof.
\end{proof}

It is obvious that
\begin{equation}\label{exp est}
0\leq\Xi(m)\leq 2^{-m},\quad m\in\mathbb{Z}_+.
\end{equation}

\begin{lemma}\label{key lemma} For every $\theta\in\mathbb{R}$ and for every $p\in\mathbb{Z},$ we have
$$\sum_{k,l=1}^M\frac{x_{\theta}^p(k)x_{\theta}^{-p}(l)}{k^2+l^2}=F(p\theta)\frac{\log(M)}{\log(2)}+O(1),\quad M\in\mathbb{N}.$$
Here,
\begin{equation}\label{F def}
F(\theta)=\sum_{m\in\mathbb{Z}}\Xi(|m|)e^{im\theta},
\end{equation}
where $\Xi$ is given in \eqref{alfa def}.
\end{lemma}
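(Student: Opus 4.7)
The plan is to partition the double sum according to the dyadic blocks $[2^{n_1},2^{n_1+1})\times[2^{n_2},2^{n_2+1})$ on which $x_\theta^p(k)x_\theta^{-p}(l)$ is constant, apply Lemma \ref{compute lemma} inside each block, and then exploit the rapid decay of $\Xi(|m|)$ from \eqref{exp est} to sum the resulting series.

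First I would reduce to the case $M=2^N$; for a general $M$ with $2^N\le M<2^{N+1}$, the missing/extra part of the sum is controlled by $\sum_{k=2^N+1}^M\sum_{l=1}^M\frac{1}{k^2+l^2}\le \sum_{k=2^N+1}^M\frac{\pi}{2k}=O(1)$ (and symmetrically), while $\log(M)/\log(2)-N=O(1)$, so both sides change by $O(1)$. Assuming $M=2^N$, I split
$$
S:=\sum_{k,l=1}^{2^N}\frac{x_\theta^p(k)x_\theta^{-p}(l)}{k^2+l^2}=\sum_{n_1,n_2=0}^{N-1}e^{i(n_1-n_2)p\theta}\sum_{k=2^{n_1}}^{2^{n_1+1}-1}\sum_{l=2^{n_2}}^{2^{n_2+1}-1}\frac{1}{k^2+l^2},
$$
since $x_\theta^p(k)=e^{in_1p\theta}$ on the first block and $x_\theta^{-p}(l)=e^{-in_2p\theta}$ on the second. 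By Lemma \ref{compute lemma}, the inner double sum equals $\Xi(|n_1-n_2|)+O(\min\{2^{-n_1},2^{-n_2}\})$.

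The error terms contribute $\sum_{n_1,n_2}O(\min\{2^{-n_1},2^{-n_2}\})=O(1)$ by splitting on $n_1\le n_2$ and summing the geometric series. For the main term, change variables to $m=n_1-n_2$; each value of $m\in\{-(N-1),\ldots,N-1\}$ is achieved by exactly $N-|m|$ pairs, giving
$$
\sum_{m=-(N-1)}^{N-1}(N-|m|)\,\Xi(|m|)\,e^{imp\theta}=N\sum_{|m|\le N-1}\Xi(|m|)e^{imp\theta}-\sum_{|m|\le N-1}|m|\,\Xi(|m|)e^{imp\theta}.
$$
By \eqref{exp est}, $\Xi(|m|)\le 2^{-|m|}$, so the series defining $F(p\theta)$ in \eqref{F def} converges absolutely, the tail in the first sum satisfies $N\sum_{|m|\ge N}\Xi(|m|)=O(N\cdot 2^{-N})=O(1)$, and the second sum is bounded by $\sum_{m\in\mathbb{Z}}|m|\,2^{-|m|}=O(1)$. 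Hence the main term equals $NF(p\theta)+O(1)$, and combining with the error analysis yields $S=\frac{\log(2^N)}{\log 2}F(p\theta)+O(1)$, as required.

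I do not expect any serious obstacle: Lemma \ref{compute lemma} does the geometric bookkeeping on a single dyadic rectangle, and the decay estimate \eqref{exp est} is strong enough to make both the Cesàro-style weighting $(N-|m|)$ and the tail truncation absolutely harmless. The only mild subtlety is the reduction from general $M$ to $M=2^N$, which must be done carefully because $x_\theta^p$ is only constant on full dyadic blocks; but as indicated above, the excess sum is $O(\log(M/2^N))=O(1)$.
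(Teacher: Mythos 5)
Your proof is correct and takes essentially the same route as the paper's: reduce to $M$ a power of $2$, partition the lattice into dyadic rectangles, apply Lemma~\ref{compute lemma} on each block, change variables to $m=n_1-n_2$ getting the weight $N-|m|$, and absorb the tail and the $|m|\Xi(|m|)$ correction into $O(1)$ via \eqref{exp est}. (The only cosmetic slip is reducing to $M=2^N$ rather than $M=2^N-1$, so the dyadic blocks $[2^{n_1},2^{n_1+1})$ with $0\le n_1\le N-1$ miss the boundary index $k=2^N$; the omitted terms contribute $O(2^{-N})$, so this is harmless.)
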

\begin{proof} Since $x_{\theta}^p=x_{p\theta},$ it follows that we may consider only the case $p=1.$ Firstly, we establish the assertion for $M=2^{n+1}-1,$ $n\in\mathbb{Z}_+.$ It follows from Lemma \ref{compute lemma} that
$$\sum_{k,l=1}^{2^{n+1}-1}\frac{x_{\theta}(k)x_{\theta}^{-1}(l)}{k^2+l^2}=\sum_{m_1,m_2=0}^ne^{i(m_1-m_2)\theta}\sum_{k=2^{m_1}}^{2^{m_1+1}-1}\sum_{l=2^{m_2}}^{2^{m_2+1}-1}\frac1{k^2+l^2}=$$
$$=\sum_{m_1,m_2=0}^n\Xi(|m_1-m_2|)e^{i(m_1-m_2)\theta}+O(1).$$
Rearranging the summands, we obtain that
$$\sum_{m_1,m_2=0}^n\Xi(|m_1-m_2|)e^{i(m_1-m_2)\theta}=$$
$$=(n+1)\Xi(0)+\sum_{m=1}^n(n+1-m)e^{im\theta}\Xi(m)+\sum_{m=1}^n(n+1-m)e^{-im\theta}\Xi(m).$$
It follows from \eqref{exp est} that
$$\sum_{m_1,m_2=0}^n\Xi(|m_1-m_2|)e^{i(m_1-m_2)}=n\Big(\sum_{m=-n}^n\Xi(|m|)e^{im\theta}\Big)+O(1)=nF(\theta)+O(1).$$
This proves the assertion for $M=2^{n+1}-1,$ $n\in\mathbb{Z}_+.$

Now, for an arbitrary $M\in[2^n,2^{n+1}),$ we have
$$|\sum_{k,l=1}^{2^{n+1}-1}\frac{x_{\theta}(k)x_{\theta}^{-1}(l)}{k^2+l^2}-\sum_{k,l=1}^M\frac{x_{\theta}(k)x_{\theta}^{-1}(l)}{k^2+l^2}|\leq\sum_{k,l=1}^{2^{n+1}-1}\frac1{k^2+l^2}-\sum_{k,l=1}^{2^n-1}\frac1{k^2+l^2}\leq$$
$$\leq 2\sum_{k=2^n}^{2^{n+1}-1}\sum_{l=1}^{2^{n+1}-1}\frac1{k^2+l^2}\leq2\sum_{k=2^n}^{2^{n+1}-1}\sum_{l=1}^{2^{n+1}-1}2^{-2n}\leq 4.$$
This concludes the proof.
\end{proof}

The proof of the following lemma is parallel (though, not identical) to that of Lemma \ref{key lemma}.

\begin{lemma}\label{second key lemma} For every $\theta\in\mathbb{R}$ and for every $p,q\in\mathbb{Z}$ such that $(p+q)\theta\notin2\pi\mathbb{Z},$ we have
$$\sum_{k,l=1}^M\frac{x_{\theta}^p(k)x_{\theta}^q(l)}{k^2+l^2}=O(1),\quad M\in\mathbb{N}.$$
\end{lemma}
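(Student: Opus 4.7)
The plan is to follow the same dyadic decomposition used in the proof of Lemma \ref{key lemma}, but exploit the hypothesis $(p+q)\theta\notin 2\pi\mathbb{Z}$ to replace the linear factor $n+1-|m|$ (which produced the logarithm there) with a uniformly bounded geometric sum.

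First I would reduce to the case $M = 2^{n+1}-1$ by the same $O(1)$ comparison used at the end of the proof of Lemma \ref{key lemma}: the difference between the sum up to $M$ and up to $2^{n+1}-1$ is dominated by $\sum_{k=2^n}^{2^{n+1}-1}\sum_{l=1}^{2^{n+1}-1}(k^2+l^2)^{-1} = O(1)$ (bounds on $|x_\theta|=1$). Then, since $x_\theta^p(k) = e^{im_1 p\theta}$ for $k\in[2^{m_1},2^{m_1+1})$ and similarly for $l$, Lemma \ref{compute lemma} gives
\begin{equation*}
\sum_{k,l=1}^{2^{n+1}-1}\frac{x_\theta^p(k)x_\theta^q(l)}{k^2+l^2}=\sum_{m_1,m_2=0}^{n}\Xi(|m_1-m_2|)\,e^{i(pm_1+qm_2)\theta}+O(1),
\end{equation*}
where the $O(1)$ error absorbs the $O(\min\{2^{-m_1},2^{-m_2}\})$ terms from Lemma \ref{compute lemma} (they are summable in $m_1,m_2$).

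Next I would change variables $m=m_1-m_2$, $s=m_2$, rewriting $pm_1+qm_2=pm+(p+q)s$, to obtain
\begin{equation*}
\sum_{m=-n}^{n}\Xi(|m|)\,e^{ipm\theta}\sum_{s=\max(0,-m)}^{\min(n,n-m)}e^{i(p+q)s\theta}.
\end{equation*}
Because $(p+q)\theta\notin 2\pi\mathbb{Z}$, the inner geometric sum is uniformly bounded by $2/|1-e^{i(p+q)\theta}|$, independent of $n$ and $m$. This is the key structural point: the "diagonal" enhancement that produced $n\cdot F(\theta)$ in Lemma \ref{key lemma} is now absent.

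Finally I would invoke the exponential decay \eqref{exp est}, $\Xi(|m|)\le 2^{-|m|}$, to conclude that
\begin{equation*}
\Bigl|\sum_{m=-n}^n \Xi(|m|)e^{ipm\theta}\Bigr|\le \sum_{m\in\mathbb{Z}}2^{-|m|}<\infty,
\end{equation*}
so the whole double sum is $O(1)$ uniformly in $n$, and therefore uniformly in $M$. I do not anticipate a real obstacle here: the only delicate point is keeping track that the $s$-sum runs over an $m$-dependent interval, but since the geometric bound is independent of the endpoints this causes no difficulty.
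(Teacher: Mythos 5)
Your proposal is correct and follows essentially the same route as the paper: the same reduction to dyadic $M=2^{n+1}-1$, the same application of Lemma \ref{compute lemma}, the same rearrangement by the difference $m=m_1-m_2$ (the paper writes the three cases $m=0$, $m>0$, $m<0$ explicitly while you package them with a change of variables, but it is the same manipulation), the same uniform geometric-sum bound from $(p+q)\theta\notin2\pi\mathbb{Z}$, and the same appeal to \eqref{exp est}. The only blemish is that your final display drops the bounded inner factor $\sum_s e^{i(p+q)s\theta}$ from the left-hand side; the surrounding prose makes clear you intend to carry the uniform bound $2/|1-e^{i(p+q)\theta}|$ along, so the argument is sound.
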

\begin{proof} Firstly, we establish the assertion for $M=2^{n+1}-1,$ $n\in\mathbb{Z}_+.$ It follows from Lemma \ref{compute lemma} that
$$\sum_{k,l=1}^{2^{n+1}-1}\frac{x_{\theta}^p(k)x_{\theta}^q(l)}{k^2+l^2}=\sum_{m_1,m_2=0}^ne^{i(pm_1+qm_2)\theta}\sum_{k=2^{m_1}}^{2^{m_1+1}-1}\sum_{l=2^{m_2}}^{2^{m_2+1}-1}\frac1{k^2+l^2}=$$
$$=\sum_{m_1,m_2=0}^n\Xi(|m_1-m_2|)e^{ip(m_1-m_2)\theta}e^{i(p+q)m_2\theta}+O(1).$$
Rearranging the summands, we obtain that
$$\sum_{m_1,m_2=0}^n\Xi(|m_1-m_2|)e^{ip(m_1-m_2)\theta}e^{i(p+q)m_2\theta}=\sum_{m=0}^n\Xi(0)e^{i(p+q)m\theta}+$$
$$+\sum_{m=1}^n\Xi(m)e^{ipm\theta}\sum_{m_2=0}^{n-m}e^{i(p+q)m_2\theta}+\sum_{m=1}^n\Xi(m)e^{-ipm\theta}\sum_{m_2=m}^ne^{i(p+q)m_2\theta}.$$
The assumption $(p+q)\theta\notin2\pi\mathbb{Z}$ guarantees that
$$\sum_{m_2=0}^{n-m}e^{i(p+q)m_2\theta}=O(1),\quad \sum_{m_2=m}^ne^{i(p+q)m_2\theta}=O(1),\quad m,n\in\mathbb{Z}_+.$$
Therefore, appealing to \eqref{exp est}, we obtain
$$|\sum_{m_1,m_2=0}^n\Xi(|m_1-m_2|)e^{ip(m_1-m_2)\theta}e^{i(p+q)m_2\theta}|\leq|\sum_{m=0}^n\Xi(0)e^{i(p+q)m\theta}|+$$
$$+\sum_{m=1}^n\Xi(m)\cdot O(1)+\sum_{m=1}^n\Xi(m)\cdot O(1)=O(1).$$
In other words, we have
$$\sum_{k,l=1}^{2^{n+1}-1}\frac{x_{\theta}^p(k)x_{\theta}^q(l)}{k^2+l^2}=O(1).$$
This proves the assertion for $M=2^{n+1}-1,$ $n\in\mathbb{Z}_+.$

Now, for an arbitrary $M\in[2^n,2^{n+1}),$ we have
$$|\sum_{k,l=1}^{2^{n+1}-1}\frac{x_{\theta}^p(k)x_{\theta}^q(l)}{k^2+l^2}-\sum_{k,l=1}^M\frac{x_{\theta}^p(k)x_{\theta}^q(l)}{k^2+l^2}|\leq\sum_{k,l=1}^{2^{n+1}-1}\frac1{k^2+l^2}-\sum_{k,l=1}^{2^n-1}\frac1{k^2+l^2}\leq$$
$$\leq 2\sum_{k=2^n}^{2^{n+1}-1}\sum_{l=1}^{2^{n+1}-1}\frac1{k^2+l^2}\leq2\sum_{k=2^n}^{2^{n+1}-1}\sum_{l=1}^{2^{n+1}-1}2^{-2n}\leq 4.$$
This concludes the proof.
\end{proof}

\begin{proof}[Proof of Theorem \ref{hard example}] Let $F$ be as in \eqref{F def}. Fourier coefficients of $F$ are given by a non-zero sequence $\{\Xi(|m|)\}_{m\in\mathbb{Z}}$ and, therefore $F\neq0.$ It follows from \eqref{exp est} that Fourier series for $F$ converges uniformly and, therefore, $F$ is continuous. It follows from the continuity of $F$ that one can choose $\theta$ such that $\frac{\theta}{2\pi}\in\mathbb{Q},$ $\theta\notin2\pi\mathbb{Z}$ and such that $F(\theta)\neq0.$ Let $\mathcal{A}_{\theta}$ be the von Neumann subalgebra in $\mathcal{L}(l_2)$ generated by $U_{\theta}.$

Since $\theta\in 2\pi\mathbb{Q},$ it follows that there exists $0\neq r\in\mathbb{Z}$ such that $U_{\theta}^r=1$ and, therefore, $\mathcal{A}_{\theta}$ is finite dimensional. Every linear functional on a finite dimensional subalgebra in $\mathcal{L}(H)$ is automatically normal. It follows that the mapping
$$T\to\varphi(T(1+D^2)^{-1/2}),\quad T\in\mathcal{A}_{\theta}$$
is normal for every linear functional on $\mathcal{L}_{1,\infty}$ (in particular, for every trace on $\mathcal{L}_{1,\infty}$). It follows from Lemma \ref{degt lemma} that, for every $p\in\mathbb{Z}$ and for every normalised trace $\varphi$ on $\mathcal{L}_{1,\infty},$ we have
$$\varphi(U_{\theta}^p(1+D^2)^{-1/2})=
\begin{cases}
1,\quad p\theta\in 2\pi\mathbb{Z}\\
0,\quad p\theta\notin 2\pi\mathbb{Z}.
\end{cases}
$$
Hence, $T(1+D^2)^{-1/2}$ is universally measurable for every $T\in\mathcal{A}_{\theta}.$ This proves \eqref{hardc}.

Since $\mathcal{A}_{\theta}\otimes\mathcal{A}_{\theta}$ is also finite dimensional, it follows that the mapping
$$T\to\varphi(T(1+D^2\otimes 1+1\otimes D^2)^{-1/2}),\quad T\in\mathcal{A}_{\theta}\otimes\mathcal{A}_{\theta}$$
is automatically normal for every linear functional on $\mathcal{L}_{1,\infty}$ (in particular, for every trace on $\mathcal{L}_{1,\infty}$).

It follows from Lemma \ref{key lemma} that, for every $\theta\in\mathbb{R}$ and for every $p\in\mathbb{Z},$ we have
$$\sum_{k,l=0}^M\frac{x_{\theta}^p(k)x_{\theta}^{-p}(l)}{1+k^2+l^2}=F(p\theta)\frac{\log(M+1)}{\log(2)}+O(1),\quad M\in\mathbb{Z}_+.$$
This equality combined with Lemma \ref{double kalton} provides that
$$\sum_{m=0}^N\lambda(m,(U_{\theta}^p\otimes U_{\theta}^{-p})(1+1\otimes D^2+D^2\otimes 1)^{-1})=F(p\theta)\frac{\log(N+1)}{2\log(2)}+O(1),\quad N\in\mathbb{Z}.$$
By Theorem \ref{spectral}, we have that
\begin{equation}\label{peqmq}
\varphi((U_{\theta}^p\otimes U_{\theta}^{-p})(1+1\otimes D^2+D^2\otimes 1)^{-1})=\frac1{2\log(2)}F(p\theta)
\end{equation}
for every normalised trace $\varphi$ on $\mathcal{L}_{1,\infty}.$

It follows from Lemma \ref{second key lemma} that, for every $\theta\in\mathbb{R}$ and for every $p,q\in\mathbb{Z}$ such that $(p+q)\theta\notin2\pi\mathbb{Z},$ we have
$$\sum_{k,l=0}^M\frac{x_{\theta}^p(k)x_{\theta}^q(l)}{1+k^2+l^2}=O(1),\quad M\in\mathbb{Z}_+.$$
By Lemma \ref{double kalton}, we have that
$$\sum_{m=0}^N\lambda(m,(U_{\theta}^p\otimes U_{\theta}^q)(1+1\otimes D^2+D^2\otimes 1)^{-1})=O(1),\quad N\in\mathbb{Z}$$
for every $p,q\in\mathbb{Z}$ with $(p+q)\theta\notin2\pi\mathbb{Z}.$ By Theorem \ref{spectral}, we have
\begin{equation}\label{pneqq}
\varphi((U_{\theta}^p\otimes U_{\theta}^q)(1+1\otimes D^2+D^2\otimes 1)^{-1})=0
\end{equation}
for every normalised trace $\varphi$ on $\mathcal{L}_{1,\infty}.$

Combining \eqref{peqmq} and \eqref{pneqq}, we conclude that elements of the form
$$T(1+1\otimes D^2+D^2\otimes 1)^{-1},\quad T\in\mathcal{A}_{\theta}\otimes\mathcal{A}_{\theta},$$
are universally measurable. This proves \eqref{hardd}.

Finally, the first assertion in \eqref{harde} follows from \eqref{peqmq} (for $p=1$) and the second assertion in \eqref{harde} follows from Lemma \ref{degt lemma}.
\end{proof}

\begin{proof}[Proof of Corollary \eqref{ex for pos}] Set $T=U+U^{-1}+2\geq0.$ In the course of the proof of Theorem \ref{hard example}, we established a formula \eqref{pneqq}, which implies
$$\varphi((U\otimes U)(1+1\otimes D^2+D^2\otimes 1)^{-1})=\varphi((U^{-1}\otimes U^{-1})(1+1\otimes D^2+D^2\otimes 1)^{-1})=0.$$
It follows from Theorem \ref{hard example} \eqref{harde} that
$$\varphi((T\otimes T)(1+1\otimes D^2+D^2\otimes 1)^{-1})=4\varphi((1+1\otimes D^2+D^2\otimes 1)^{-1})+$$
$$+2\varphi((U\otimes U^{-1})(1+1\otimes D^2+D^2\otimes 1)^{-1})\neq 4\varphi((1+1\otimes D^2+D^2\otimes 1)^{-1}).$$
On the other hand, it follows from Lemma \ref{nte3} and Theorem \ref{hard example} \eqref{harde} that
$$4\varphi((1+1\otimes D^2+D^2\otimes 1)^{-1})=\pi=\frac{\pi}{4}(\varphi(T(1+D^2)^{-1/2}))^2.$$
\end{proof}

\begin{remark} Neither Theorem \ref{hard example} nor its proof specifies the dimension of the algebra $\mathcal{A}.$ However, if we replace $2$ with $2^7$ in the definition of $x_{\theta}$ and set $\theta=\pi,$ then the algebra $\mathcal{A}$ becomes $2-$dimensional. That $F(\pi)\neq0$ can be showed as in the proof of Theorem \ref{second example} below.
\end{remark}

\section{Proof of Theorem \ref{second example}}

Define the sequence $d$ by setting $d(0)=0,$
$$d(k)=
\begin{cases}
k,& k\in[2^{7n},2^{7(n+1)}),\quad n=0 \ {\rm mod} \ 2\\
2^7k,& k\in[2^{7n},2^{7(n+1)}),\quad n=1 \ {\rm mod} \ 2
\end{cases}
$$
and set $D={\rm diag}(\{d(k)\}_{k\geq0}).$

Set
$$\Xi_0(m)=\int_1^{2^7}\int_{2^{7m}}^{2^{7(m+1)}}\frac{dtds}{t^2+s^2}.$$
The proof of the following lemma is identical to that of Lemma \ref{compute lemma} and is, therefore, omitted.

\begin{lemma}\label{third compute} We have
$$\sum_{k_1=2^{7n_1}}^{2^{7(n_1+1)}-1}\sum_{k_2=2^{7n_2}}^{2^{7(n_2+1)}-1}\frac1{d^2(k_1)+d^2(k_2)}=$$
$$=O(\min\{2^{-7n_1},2^{-7n_2}\})+
\begin{cases}
\Xi_0(n_2-n_1),& n_1=0 \ {\rm mod} \ 2, n_2=0 \ {\rm mod} \ 2\\
2^{-7}\Xi_0(n_2-n_1-1),& n_1=1 \ {\rm mod} \ 2,n_2=0 \ {\rm mod} \ 2\\
2^{-7}\Xi_0(n_2-n_1+1),& n_1=0 \ {\rm mod} \ 2,n_2=1 \ {\rm mod} \ 2\\
2^{-14}\Xi_0(n_2-n_1),& n_1=1 \ {\rm mod} \ 2,n_2=1 \ {\rm mod} \ 2
\end{cases}
$$
\end{lemma}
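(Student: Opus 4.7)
The plan is to mimic the proof of Lemma \ref{compute lemma} verbatim, with the only new ingredient being a case split on the parities of $n_1$ and $n_2$. The first step is to observe that on each block $[2^{7n}, 2^{7(n+1)})$ the function $d$ is simply $\varepsilon_n k$, where $\varepsilon_n = 1$ for $n$ even and $\varepsilon_n = 2^7$ for $n$ odd, so the sum in question becomes
$$\sum_{k_1=2^{7n_1}}^{2^{7(n_1+1)}-1}\sum_{k_2=2^{7n_2}}^{2^{7(n_2+1)}-1}\frac{1}{\varepsilon_{n_1}^2 k_1^2+\varepsilon_{n_2}^2 k_2^2}.$$

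The second step is to approximate this double sum by the corresponding double integral exactly as in Lemma \ref{compute lemma}: sandwich the inner sum between two Riemann sums, estimate the discrepancy by a single boundary term, and then repeat the procedure on the outer sum. Since the coefficients $\varepsilon_{n_i}^{-2}$ are uniformly bounded by $1$, the resulting error is of the same order $O(\min\{2^{-7n_1}, 2^{-7n_2}\})$ as in Lemma \ref{compute lemma}; no new analytic input is needed.

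The third and final step is to rewrite the resulting integrals in terms of $\Xi_0$ by exploiting the scale-invariance of the measure $\frac{dt\, ds}{t^2+s^2}$ under dilations $(t,s) \mapsto (\lambda t, \lambda s)$. For example, in the case $n_1 \equiv 1, n_2 \equiv 0 \pmod 2$, the substitution $t \mapsto 2^{-7} t$ absorbs the factor $2^{14}$ in front of $t^2$, shifts the range of integration from the $n_1$-block to the $(n_1+1)$-block, and produces the prefactor $2^{-7}$, yielding $2^{-7}\Xi_0(n_2-n_1-1)$. The remaining three cases follow by the same device, and one would match them term by term with the four branches in the statement.

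The main obstacle, and it is a mild one, is merely bookkeeping the four branches and keeping track of which direction each substitution shifts the index; the analytic content is entirely contained in Lemma \ref{compute lemma} and in the elementary scaling property of $\frac{dt\, ds}{t^2+s^2}$.
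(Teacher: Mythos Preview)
Your proposal is correct and is exactly the approach the paper intends: the paper explicitly states that the proof is identical to that of Lemma~\ref{compute lemma} and omits it, and your three steps (replace $d(k)$ by $\varepsilon_n k$ on each dyadic block, approximate the sum by the integral with error $O(\min\{2^{-7n_1},2^{-7n_2}\})$ as in Lemma~\ref{compute lemma}, then use the scale-invariance of $\frac{dt\,ds}{t^2+s^2}$ to reduce to $\Xi_0$) are precisely the content of that identical argument. Your case-by-case identification of the four branches is correct.
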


Note that
$$\Xi_0(m)=\Xi_0(|m|)\leq\int_1^{2^7}\int_{2^{7|m|}}^{2^{7(|m|+1)}}\frac{dtds}{2^{14|m|}}\leq (2^7-1)^2\cdot2^{-7m}.$$
In particular, we have
$$\sum_{m\in\mathbb{Z}}\Xi_0(m)<\infty.$$

\begin{lemma} We have
$$\sum_{k_1,k_2=1}^M\frac1{d^2(k_1)+d^2(k_2)}=\frac{\log(M)}{14\log(2)}(1+2^{-7})^2\sum_{m\in\mathbb{Z}}\Xi_0(2m)+O(1).$$
\end{lemma}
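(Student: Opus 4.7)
The plan is to apply Lemma \ref{third compute} blockwise, in the spirit of the dyadic argument of Lemma \ref{key lemma}. First restrict to $M = 2^{7(N+1)} - 1$ and partition $\{1, \dots, M\}^2$ into the $(N+1)^2$ rectangles $[2^{7n_1}, 2^{7(n_1+1)}) \times [2^{7n_2}, 2^{7(n_2+1)})$. Lemma \ref{third compute} evaluates each block; the error terms $O(\min\{2^{-7n_1}, 2^{-7n_2}\})$ sum over $(n_1, n_2)$ to a convergent double geometric series, contributing $O(1)$ in total.

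The remaining main contribution splits into four subsums according to the parities of $n_1, n_2$. The key observation is that in each of the four cases the argument of $\Xi_0$ is even: for parities (even, even) and (odd, odd) the argument is $n_2 - n_1$, while in the mixed-parity cases the shifts $\mp 1$ appearing in Lemma \ref{third compute} compensate the odd parity of $n_2 - n_1$. After reindexing each subsum by the (even) value $m$ of the $\Xi_0$ argument, the number of pairs $(n_1, n_2) \in \{0, \dots, N\}^2$ of the prescribed parity yielding this $m$ equals $N/2 + O(|m| + 1)$. Using the decay $\Xi_0(m) = O(2^{-7|m|})$, the $O(|m| + 1)$ counting errors integrate against $\Xi_0$ to $O(1)$, so each of the four subsums equals
$$\frac{N}{2}\, c\, \sum_{m \in 2\mathbb{Z}} \Xi_0(m) + O(1),$$
with $c$ equal to $1$, $2^{-7}$, $2^{-7}$, $2^{-14}$ in the four cases, respectively.

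Summing the four contributions gives the factor $1 + 2 \cdot 2^{-7} + 2^{-14} = (1 + 2^{-7})^2$, and $\sum_{m \in 2\mathbb{Z}} \Xi_0(m) = \sum_{m \in \mathbb{Z}} \Xi_0(2m)$. Since $\log M = 7(N+1) \log 2 + O(1)$ for $M = 2^{7(N+1)} - 1$, we have $N/2 = \log(M)/(14 \log(2)) + O(1)$, which matches the asserted asymptotic. For arbitrary $M \in [2^{7N}, 2^{7(N+1)})$, the discrepancy between $\sum_{k_1, k_2 = 1}^M$ and $\sum_{k_1, k_2 = 1}^{2^{7(N+1)} - 1}$ lies in a single row and column of blocks (those with $n_1 = N$ or $n_2 = N$), which Lemma \ref{third compute} combined with the summability of $\Xi_0$ bounds by $O(1)$.

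The main obstacle will be the parity bookkeeping: confirming case by case that all four $\Xi_0$ arguments are even, that the pair count is uniformly $N/2 + O(|m| + 1)$, and that the four parity coefficients combine into the perfect square $(1 + 2^{-7})^2$. Everything else is routine thanks to the geometric decay of $\Xi_0$.
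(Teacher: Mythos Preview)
Your proposal is correct and follows essentially the same approach as the paper: both restrict first to $M=2^{7(N+1)}-1$, apply Lemma~\ref{third compute} blockwise, split into the four parity classes (observing that the argument of $\Xi_0$ is always even), count contributing pairs as $N/2+O(|m|+1)$, and finally pass to general $M$ using the summability of $\Xi_0$. The only cosmetic difference is that the paper merges the four parity subsums into a single sum via a substitution before invoking the counting argument, whereas you treat each case separately and combine at the end; the content is identical.
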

\begin{proof} Suppose first that $M=2^{7(n+1)}-1.$ It follows from Lemma \ref{third compute} that
$$\sum_{k_1,k_2=1}^{2^{7(n+1)}-1}\frac1{d^2(k_1)+d^2(k_2)}=\sum_{\substack{0\leq n_1,n_2\leq n\\ n_1,n_2=0 \, {\rm mod} \, 2}}\Xi_0(n_2-n_1)+2^{-14}\sum_{\substack{0\leq n_1,n_2\leq n\\ n_1,n_2=1 \, {\rm mod} \, 2}}\Xi_0(n_2-n_1)+$$
$$+2^{-7}\sum_{\substack{0\leq n_1,n_2\leq n\\n_1=1 \, {\rm mod} \, 2\\n_2=0 \, {\rm mod} \, 2}}\Xi_0(n_2-n_1-1)+2^{-7}\sum_{\substack{0\leq n_1,n_2\leq n\\n_1=0 \, {\rm mod} \, 2\\n_2=1 \, {\rm mod} \, 2}}\Xi_0(n_2-n_1+1)+O(1).$$
Making the substitution
$$(n_1,n_2)=
\begin{cases}
(m_1,m_2),& n_1=0 \ {\rm mod} \ 2,n_2=0 \ {\rm mod} \ 2\\
(m_1-1,m_2),& n_1=1 \ {\rm mod} \ 2,n_2=0 \ {\rm mod} \ 2\\
(m_1,m_2-1),& n_1=0 \ {\rm mod} \ 2,n_2=1 \ {\rm mod} \ 2\\
(m_1-1,m_2-1),& n_1=1 \ {\rm mod} \ 2,n_2=1 \ {\rm mod} \ 2
\end{cases}
$$
we have that
$$\sum_{k_1,k_2=1}^{2^{7(n+1)}-1}\frac1{d^2(k_1)+d^2(k_2)}=(1+2^{-7})^2\sum_{\substack{0\leq m_1\leq n\\1\leq m_2\leq n\\m_1,m_2=0 \ {\rm mod} \ 2}}\Xi_0(m_2-m_1)+O(1).$$
Rearranging the summands as in Lemma \ref{key lemma}, we infer that
$$\sum_{\substack{0\leq m_1\leq n\\1\leq m_2\leq n\\m_1,m_2=0 \ {\rm mod} \ 2}}\Xi_0(m_2-m_1)=\frac{n}{2}\sum_{m\in\mathbb{Z}}\Xi_0(2m)+O(1).$$
Passing from $M=2^{7(n+1)}-1$ to generic $M$ as in Lemma \ref{key lemma}, we conclude the proof.
\end{proof}

\begin{lemma}\label{fourth compute} For every normalised trace $\varphi$ on $\mathcal{L}_{1,\infty},$ we have
$$\varphi((1+D^2\otimes 1+1\otimes D^2)^{-1})=\frac1{7\log(2)}(\frac{1+2^{-7}}{2})^2\sum_{m\in\mathbb{Z}}\Xi_0(2m),$$
$$\varphi((1+D^2)^{-1/2})=\frac{1+2^{-7}}{2}.$$
\end{lemma}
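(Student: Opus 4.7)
The plan is to apply the spectral characterisation from Theorem \ref{spectral}: in each case I will exhibit an asymptotic expansion of the partial sums $\sum_{m=0}^{n}\lambda(m,\cdot)$ of the form $c\log(n+1)+O(1)$, and read off $\varphi(\cdot)=c$. The diagonal form of $D$ means that $(1+D^{2})^{-1/2}$ is the diagonal operator with entries $(1+d^{2}(k))^{-1/2}$, and $(1+D^{2}\otimes 1+1\otimes D^{2})^{-1}$ is diagonal with entries $(1+d^{2}(k)+d^{2}(l))^{-1}$; so everything reduces to scalar asymptotic computations.

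For the second identity, I would first observe that $d(k)\geq k$, hence $(1+d^{2}(k))^{-1/2}=O(1/(k+1))$, so Lemma~\ref{kalton} (applied to a suitable scalar multiple) yields
\[
\sum_{m=0}^{n}\lambda(m,(1+D^{2})^{-1/2})=\sum_{k=0}^{n}(1+d^{2}(k))^{-1/2}+O(1).
\]
I would then evaluate the right-hand side block by block. On an even block $k\in[2^{7n},2^{7(n+1)})$ one has $(1+d^{2}(k))^{-1/2}\sim 1/k$, and integral comparison gives a contribution $7\log(2)+O(2^{-7n})$; on an odd block one has $(1+d^{2}(k))^{-1/2}\sim 2^{-7}/k$ and the contribution is $2^{-7}\cdot 7\log(2)+O(2^{-7n})$. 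Summing over blocks contained in $[0,n]$ and handling the incomplete last block trivially, one obtains $\sum_{k=0}^{n}(1+d^{2}(k))^{-1/2}=\tfrac{1+2^{-7}}{2}\log(n+1)+O(1)$. Theorem~\ref{spectral} then gives $\varphi((1+D^{2})^{-1/2})=\tfrac{1+2^{-7}}{2}$.

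For the first identity, the same machinery applies but in two dimensions. Since $d(k)\geq k$, we have $(1+d^{2}(k)+d^{2}(l))^{-1}\leq (1+k^{2}+l^{2})^{-1}$, so Lemma~\ref{double kalton} (again up to rescaling) yields
\[
\sum_{m=0}^{n}\lambda\!\bigl(m,(1+D^{2}\otimes 1+1\otimes D^{2})^{-1}\bigr)=\sum_{k,l=0}^{\floor{n^{1/2}}}\frac{1}{1+d^{2}(k)+d^{2}(l)}+O(1).
\]
The preceding lemma already computed the double sum (removing the $+1$ in the denominator and the boundary $k=0$ or $l=0$ costs only $O(1)$) as $\tfrac{\log(M)}{14\log(2)}(1+2^{-7})^{2}\sum_{m\in\mathbb{Z}}\Xi_{0}(2m)+O(1)$ with $M=\floor{n^{1/2}}$. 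Since $\log(\floor{n^{1/2}})=\tfrac12\log(n+1)+O(1)$, the asymptotic becomes $\tfrac{\log(n+1)}{28\log(2)}(1+2^{-7})^{2}\sum_{m\in\mathbb{Z}}\Xi_{0}(2m)+O(1)$, and Theorem~\ref{spectral} delivers the stated value $\tfrac{1}{7\log(2)}\bigl(\tfrac{1+2^{-7}}{2}\bigr)^{2}\sum_{m\in\mathbb{Z}}\Xi_{0}(2m)$ after noting $\tfrac{1}{4}\cdot\tfrac{1}{7}=\tfrac{1}{28}$.

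None of these steps is hard individually; the only subtle point is bookkeeping of the factor $\tfrac12$ coming from the $n^{1/2}$ in Lemma~\ref{double kalton}, which is exactly what turns the $\tfrac{1}{14\log 2}$ of the preceding block-sum lemma into the $\tfrac{1}{28\log 2}=\tfrac{1}{7\log 2}\cdot\tfrac14$ that appears in the statement.
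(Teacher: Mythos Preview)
Your proposal is correct and follows essentially the same route as the paper: for the first identity you invoke the preceding block-sum lemma, Lemma~\ref{double kalton}, and Theorem~\ref{spectral} exactly as the paper does (including the factor $\tfrac12$ from $M=\lfloor n^{1/2}\rfloor$), and for the second identity your block-by-block summation is a direct variant of the paper's decomposition $(1+d^2(k))^{-1/2}=\tfrac{1+2^{-7}}{2k}+\tfrac{1-2^{-7}}{2k}(\pm1)+O(k^{-2})$, both yielding $\sum_{k\le n}(1+d^2(k))^{-1/2}=\tfrac{1+2^{-7}}{2}\log n+O(1)$. The only cosmetic difference is that the paper isolates the oscillating part explicitly rather than summing even and odd blocks separately.
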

\begin{proof} It follows from Lemma \ref{third compute} that
$$\sum_{k_1,k_2=0}^M\frac1{1+d^2(k_1)+d^2(k_2)}=\frac{\log(M)}{14\log(2)}(1+2^{-7})^2\sum_{m\in\mathbb{Z}}\Xi_0(2m)+O(1).$$
It follows now from Lemma \ref{double kalton} that
$$\sum_{k=0}^M\lambda(k,(1+D^2\otimes 1+1\otimes D^2)^{-1})=\frac{\log(M)}{28\log(2)}(1+2^{-7})^2\sum_{m\in\mathbb{Z}}\Xi_0(2m)+O(1).$$
The first assertion follows now from Theorem \ref{spectral}.

The second assertion follows from the equality
$$\frac1{(1+d^2(k))^{1/2}}=O(k^{-2})+\begin{cases}
k^{-1},& k\in[2^{7n},2^{7(n+1)}),\quad n=0 \ {\rm mod} \ 2\\
2^{-7}k^{-1},& k\in[2^{7n},2^{7(n+1)}),\quad n=1 \ {\rm mod} \ 2
\end{cases}=$$
$$=O(k^{-2})+\frac{1+2^{-7}}{2k}+\frac{1-2^{-7}}{2k}\cdot\begin{cases}
1,& k\in[2^{7n},2^{7(n+1)}),\quad n=0 \ {\rm mod} \ 2\\
-1,& k\in[2^{7n},2^{7(n+1)}),\quad n=1 \ {\rm mod} \ 2.
\end{cases}$$
\end{proof}

\begin{proof}[Proof of Theorem \ref{second example}] According to the Lemma \ref{fourth compute}, it suffices to show that
$$\sum_{m\in\mathbb{Z}}\Xi_0(2m)>\frac{7\pi}{4}\log(2).$$
In fact, we have
$$\Xi_0(0)=\int_1^{2^7}\int_1^{2^7}\frac{dtds}{t^2+s^2}=2\int_{1\leq s\leq t\leq 2^7}\frac{dtds}{t^2+s^2}\geq2\int_{1\leq s\leq t\leq 2^7}\frac{dtds}{2t^2}=$$
$$=\int_1^{2^7}\frac{(t-1)dt}{t^2}> 7\log(2)-1.$$
Therefore,
$$\sum_{m\in\mathbb{Z}}\Xi_0(2m)>\Xi_0(0)>7\log(2)-1>\frac{7\pi}{4}\log(2).$$
\end{proof}

\appendix

\section{Number-theoretic estimates}

The following lemmas are standard in number theory.

\begin{lemma}\label{nte1} For every $p\in\mathbb{N},$ we have
$$\sum_{\substack{\mathbf{k}\in\mathbb{Z}_+^p\\|\mathbf{k}|\leq m}}1=\frac{2^{-p}\pi^{\frac{p}{2}}}{\Gamma(1+\frac{p}{2})}m^p+O(m^{p-1}),\quad \sum_{\substack{\mathbf{k}\in\mathbb{Z}^p\\|\mathbf{k}|\leq m}}1=\frac{\pi^{\frac{p}{2}}}{\Gamma(1+\frac{p}{2})}m^p+O(m^{p-1}).$$
\end{lemma}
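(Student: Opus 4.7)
The plan is to treat the second (full lattice $\mathbb{Z}^p$) equality first by the classical cube-tiling argument, then deduce the first (positive orthant) from it by carefully handling the coordinate hyperplanes.

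For the full lattice count, I associate to each $\mathbf{k}\in\mathbb{Z}^p$ the unit cube $Q_{\mathbf{k}}=\mathbf{k}+[-\tfrac12,\tfrac12]^p$. These cubes tile $\mathbb{R}^p$ with unit volume each, so
$$\sum_{\substack{\mathbf{k}\in\mathbb{Z}^p\\|\mathbf{k}|\leq m}}1=\Big|\bigcup_{|\mathbf{k}|\leq m}Q_{\mathbf{k}}\Big|.$$
Since the half-diameter of each cube is $\sqrt{p}/2$, the sandwich
$$B(0,m-\tfrac{\sqrt{p}}{2})\ \subset\ \bigcup_{|\mathbf{k}|\leq m}Q_{\mathbf{k}}\ \subset\ B(0,m+\tfrac{\sqrt{p}}{2})$$
holds, and taking Lebesgue measure yields
$$V_p\bigl(m-\tfrac{\sqrt{p}}{2}\bigr)^p\leq \sum_{|\mathbf{k}|\leq m}1\leq V_p\bigl(m+\tfrac{\sqrt{p}}{2}\bigr)^p,$$
where $V_p=\pi^{p/2}/\Gamma(1+p/2)$ is the volume of the unit ball in $\mathbb{R}^p$. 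A binomial expansion of both bounds delivers the second equality with the claimed $O(m^{p-1})$ remainder.

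For the first equality, let $N(m)$ and $N_+(m)$ denote the full-lattice and positive-orthant counts respectively (with $\mathbb{Z}_+=\{0,1,2,\ldots\}$). Grouping points of $\mathbb{Z}^p$ by the vector of absolute values, each $\mathbf{k}\in\mathbb{Z}_+^p$ with $s(\mathbf{k})$ nonzero coordinates is shared by exactly $2^{s(\mathbf{k})}$ sign-orbits, hence
$$N(m)=\sum_{\mathbf{k}\in\mathbb{Z}_+^p,\,|\mathbf{k}|\leq m}2^{s(\mathbf{k})}=2^p\,N_+(m)-R(m),$$
where $R(m)$ counts (with appropriate inclusion-exclusion weights) the lattice points in $\mathbb{Z}_+^p\cap B(0,m)$ lying on at least one coordinate hyperplane $\{k_i=0\}$. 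Each such hyperplane intersection reduces to the $(p-1)$-dimensional counting problem on $\mathbb{Z}_+^{p-1}$, which by induction on $p$ is $O(m^{p-1})$, so $R(m)=O(m^{p-1})$. Dividing by $2^p$ and substituting the second equality gives the first.

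The proof is essentially routine; the only step requiring any care is handling the coordinate-hyperplane overcount in the passage from $\mathbb{Z}^p$ to $\mathbb{Z}_+^p$, and this is dispatched by an induction on $p$ using the same cube-tiling estimate in one lower dimension (the base case $p=1$ being trivial). No deeper tool is needed.
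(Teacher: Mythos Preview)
Your proof is correct and uses essentially the same ingredients as the paper's: a cube-tiling volume comparison to extract the leading term, together with induction on $p$ to control the $O(m^{p-1})$ contribution from lattice points on coordinate hyperplanes. The only difference is organizational---the paper proves the positive-orthant count first (using corner cubes $[0,1]^p+\mathbf{k}$ and sandwiching the orthant-ball volume between the $\mathbb{N}^p$ and $\mathbb{Z}_+^p$ sums) and then deduces the full-lattice count via $2^p\sum_{\mathbb{N}^p}\leq\sum_{\mathbb{Z}^p}\leq 2^p\sum_{\mathbb{Z}_+^p}$, whereas you reverse the order and use centered cubes; neither choice is materially simpler.
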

\begin{proof} We prove the first assertion by induction on $p.$ Let $K=[0,1]^p$ be the unit cube. For brevity, we denote $p-$tuple $(1,\cdots,1)=1.$ We have
$$\sum_{\substack{\mathbf{k}\in\mathbb{N}^p\\|\mathbf{k}|\leq m}}1=\sum_{\substack{\mathbf{k}\in\mathbb{N}^p\\|\mathbf{k}|\leq m}}\int_{K+\mathbf{k}-1}dt=\int_{C_m}dt,\quad\sum_{\substack{\mathbf{k}\in\mathbb{Z}_+^p\\|\mathbf{k}|\leq m}}1=\sum_{\substack{\mathbf{k}\in\mathbb{Z}_+^p\\|\mathbf{k}|\leq m}}\int_{K+\mathbf{k}}dt=\int_{B_m}dt,$$
where
$$C_m=\bigcup_{\substack{\mathbf{k}\in\mathbb{N}^p\\|\mathbf{k}|\leq m}}(K+\mathbf{k}-1)\subset\{t\in\mathbb{R}_+^p:\ |t|\leq m\}\subset\bigcup_{\substack{\mathbf{k}\in\mathbb{Z}_+^p\\|\mathbf{k}|\leq m}}(K+\mathbf{k})=B_m.$$
It follows immediately that
\begin{equation}\label{summa po sharu}
\sum_{\substack{\mathbf{k}\in\mathbb{N}^p\\|\mathbf{k}|\leq m}}1\leq\int_{\substack{t\in\mathbb{R}_+^p\\|t|\leq m}}dt\leq\sum_{\substack{\mathbf{k}\in\mathbb{Z}_+^p\\|\mathbf{k}|\leq m}}1.
\end{equation}

It is clear that
\begin{equation}\label{left right}
\sum_{\substack{\mathbf{k}\in\mathbb{Z}_+^p\\|\mathbf{k}|\leq m}}1-\sum_{\substack{\mathbf{k}\in\mathbb{N}^p\\|\mathbf{k}|\leq m}}1\leq p\sum_{\substack{\mathbf{k}\in\mathbb{Z}_+^{p-1}\\|\mathbf{k}|\leq m}}1=O(m^{p-1}),
\end{equation}
where we used induction with respect to $p$ in the last equality. Combining \eqref{summa po sharu} and \eqref{left right}, we infer that
$$\sum_{\substack{\mathbf{k}\in\mathbb{Z}_+^p\\|\mathbf{k}|\leq m}}1=\int_{\substack{t\in\mathbb{R}_+^p\\|t|\leq m}}dt+O(m^{p-1})=\frac{2^{-p}\pi^{\frac{p}{2}}}{\Gamma(1+\frac{p}{2})}m^p+O(m^{p-1}).$$
This concludes the proof of the first equality.

To see the second equality, note that
$$2^p\sum_{\substack{\mathbf{k}\in\mathbb{N}^p\\|\mathbf{k}|\leq m}}1\leq \sum_{\substack{\mathbf{k}\in\mathbb{Z}^p\\|\mathbf{k}|\leq m}}1\leq 2^p\sum_{\substack{\mathbf{k}\in\mathbb{Z}_+^p\\|\mathbf{k}|\leq m}}1.$$
The second equality follows now from \eqref{left right}.
\end{proof}

\begin{lemma}\label{nte2} Let $\alpha_p:\mathbb{Z}_+\to\mathbb{Z}_+^p$ be a bijection. If the mapping $m\to|\alpha_p(m)|$ increases, then
$$|\alpha_p(m)|^p=2^p\pi^{-p/2}\Gamma(1+\frac{p}{2})m+O(m^{\frac{p-1}{p}}).$$
Let $\alpha_p:\mathbb{Z}_+\to\mathbb{Z}^p$ be a bijection. If the mapping $m\to|\alpha_p(m)|$ increases, then
$$|\alpha_p(m)|^p=\pi^{-p/2}\Gamma(1+\frac{p}{2})m+O(m^{\frac{p-1}{p}}).$$
\end{lemma}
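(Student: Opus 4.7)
The plan is to derive the asymptotic for $|\alpha_p(m)|$ by inverting the counting asymptotic of Lemma \ref{nte1}. Let $N(r) = |\{\mathbf{k}\in\mathbb{Z}_+^p:|\mathbf{k}|\leq r\}|$, so that Lemma \ref{nte1} gives
$$N(r)=C_p r^p+O(r^{p-1}),\quad C_p=\frac{2^{-p}\pi^{p/2}}{\Gamma(1+\tfrac{p}{2})}.$$
Since $\alpha_p$ is a bijection with $m\mapsto|\alpha_p(m)|$ non-decreasing, the points $\alpha_p(0),\ldots,\alpha_p(m)$ are $m+1$ distinct lattice points lying in the closed ball of radius $|\alpha_p(m)|$, so $N(|\alpha_p(m)|)\geq m+1$. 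On the other hand, any $\mathbf{k}\in\mathbb{Z}_+^p$ with $|\mathbf{k}|<|\alpha_p(m)|$ must equal $\alpha_p(j)$ for some $j<m$ (else the monotonicity would be violated), so $\lim_{\varepsilon\to 0^+}N(|\alpha_p(m)|-\varepsilon)\leq m$. The difference between these two quantities is precisely the number of lattice points at norm exactly $|\alpha_p(m)|$, which is bounded by $N(r+1)-N(r-1)=O(r^{p-1})$. Combining,
$$N(|\alpha_p(m)|)=m+O(|\alpha_p(m)|^{p-1}).$$

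Next I would substitute into the Lemma \ref{nte1} asymptotic to obtain
$$m=C_p|\alpha_p(m)|^p+O(|\alpha_p(m)|^{p-1}).$$
A one-step bootstrap finishes the proof: the equation first implies $|\alpha_p(m)|^p=O(m)$, hence $|\alpha_p(m)|=O(m^{1/p})$, and feeding this back into the error term gives
$$|\alpha_p(m)|^p=\frac{m}{C_p}+O(m^{(p-1)/p})=2^p\pi^{-p/2}\Gamma(1+\tfrac{p}{2})\cdot m+O(m^{(p-1)/p}),$$
which is the first assertion. The second assertion follows identically: one simply uses the second equality in Lemma \ref{nte1}, which replaces $C_p$ by $2^p C_p$, and the rest of the argument is unchanged.

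The only delicate point is handling norm ties (multiple lattice points with the same $|\mathbf{k}|$), since the hypothesis only says $m\mapsto|\alpha_p(m)|$ is increasing (not strictly). I expect this to be the main nuisance, but it is resolved by the shell estimate $N(r+1)-N(r-1)=O(r^{p-1})$, which is immediate from Lemma \ref{nte1} and shows that the ambiguity in $N(|\alpha_p(m)|)$ versus $m+1$ is absorbed into the same order as the error term already present.
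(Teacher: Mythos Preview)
Your proposal is correct and follows essentially the same approach as the paper: both sandwich $m$ between the counts $|\{\mathbf{k}:|\mathbf{k}|\leq|\alpha_p(m)|\}|$ and $|\{\mathbf{k}:|\mathbf{k}|<|\alpha_p(m)|\}|$, invoke Lemma~\ref{nte1}, and invert. If anything, your write-up is slightly more careful, making explicit the bootstrap $|\alpha_p(m)|=O(m^{1/p})$ needed to convert the $O(|\alpha_p(m)|^{p-1})$ error into $O(m^{(p-1)/p})$, a step the paper leaves implicit.
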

\begin{proof} It follows from Lemma \ref{nte1} that
$$m\leq\sum_{|\alpha_p(k)|\leq|\alpha_p(m)|}1=\sum_{\substack{\mathbf{k}\in\mathbb{Z}_+^p\\|\mathbf{k}|\leq |\alpha_p(m)|}}1\leq\sum_{\substack{\mathbf{k}\in\mathbb{Z}_+^p\\|\mathbf{k}|\leq\floor{|\alpha_p(m)|}+1}}1=$$
$$=\frac{2^{-p}\pi^{\frac{p}{2}}}{\Gamma(1+\frac{p}{2})}|\alpha_p(m)|^p+O(|\alpha_p(m)|^{p-1})$$
and
$$m\geq\sum_{|\alpha_p(k)|<|\alpha_p(m)|}1=\sum_{\substack{\mathbf{k}\in\mathbb{Z}_+^p\\|\mathbf{k}|<|\alpha_p(m)|}}1\leq\sum_{\substack{\mathbf{k}\in\mathbb{Z}_+^p\\|\mathbf{k}|\leq\floor{|\alpha_p(m)|}-1}}1=$$
$$=\frac{2^{-p}\pi^{\frac{p}{2}}}{\Gamma(1+\frac{p}{2})}|\alpha_p(m)|^p+O(|\alpha_p(m)|^{p-1}).$$
A combination of these estimates yields the first assertion and the proof of the second one is identical.
\end{proof}

\begin{lemma}\label{nte3} For every $p\in\mathbb{N},$ we have $(1-\Delta_p)^{-p/2}\in\mathcal{L}_{1,\infty}.$ For every normalised trace $\varphi$ on $\mathcal{L}_{1,\infty},$ we have
$$\Gamma(1+\frac{p}{2})\varphi((1-\Delta_p)^{-p/2})=\pi^{p/2}.$$
\end{lemma}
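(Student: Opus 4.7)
The plan is to diagonalize $(1-\Delta_p)^{-p/2}$ in the plane-wave basis on $\ell_2(\mathbb{Z}^p)$ and apply Theorem \ref{spectral}. The eigenvalues of $(1-\Delta_p)^{-p/2}$ form the set $\{(1+|\mathbf{k}|^2)^{-p/2}:\mathbf{k}\in\mathbb{Z}^p\}$. Choose a bijection $\alpha_p:\mathbb{Z}_+\to\mathbb{Z}^p$ with $m\mapsto|\alpha_p(m)|$ non-decreasing (as in Lemma \ref{nte2}); then the singular values of $(1-\Delta_p)^{-p/2}$ arranged in non-increasing order are exactly
$$\mu(m,(1-\Delta_p)^{-p/2})=(1+|\alpha_p(m)|^2)^{-p/2},\quad m\geq0.$$

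By the second assertion of Lemma \ref{nte2}, $|\alpha_p(m)|^p=\pi^{-p/2}\Gamma(1+\frac{p}{2})m+O(m^{(p-1)/p})$. Inverting this expansion and performing a Taylor expansion of $(1+x)^{-p/2}$ at $x=0$ applied to $x=|\alpha_p(m)|^{-2}$, one obtains
$$\mu(m,(1-\Delta_p)^{-p/2})=\frac{\pi^{p/2}}{\Gamma(1+\frac{p}{2})}\cdot\frac{1}{m}+O(m^{-1-1/p}),\quad m\geq1.$$
In particular, $\mu(m,\cdot)=O(m^{-1})$, so $(1-\Delta_p)^{-p/2}\in\mathcal{L}_{1,\infty}$. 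Summing the above asymptotic (the error term is summable with bounded partial sums) yields
$$\sum_{m=0}^n\mu(m,(1-\Delta_p)^{-p/2})=\frac{\pi^{p/2}}{\Gamma(1+\frac{p}{2})}\log(n+1)+O(1).$$

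Since $(1-\Delta_p)^{-p/2}\geq0$, the eigenvalues $\lambda(m,\cdot)$ coincide with the singular values $\mu(m,\cdot)$, so Theorem \ref{spectral} applies directly and gives
$$\varphi((1-\Delta_p)^{-p/2})=\frac{\pi^{p/2}}{\Gamma(1+\frac{p}{2})}$$
for every normalised trace $\varphi$ on $\mathcal{L}_{1,\infty}$. Multiplying both sides by $\Gamma(1+\frac{p}{2})$ gives the claim. There is no essential obstacle in this argument: the substance is packaged into Lemma \ref{nte2} (the lattice point count) and Theorem \ref{spectral} (the characterization of universally measurable operators). The only minor care required is verifying that the error $O(m^{-1-1/p})$ contributes only $O(1)$ to the partial sums, which is immediate from $\sum m^{-1-1/p}<\infty$.
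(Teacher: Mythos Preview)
Your proof is correct and follows essentially the same route as the paper: both invoke the second assertion of Lemma~\ref{nte2} to obtain $\mu(m,(1-\Delta_p)^{-p/2})=\frac{\pi^{p/2}}{\Gamma(1+\frac{p}{2})}\cdot\frac{1}{m+1}+O((m+1)^{-1-1/p})$, sum to get the logarithmic asymptotic of the partial sums, and conclude via Theorem~\ref{spectral}. Your version is slightly more explicit (you spell out the Taylor expansion and the positivity argument identifying eigenvalues with singular values), but the argument is the same.
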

\begin{proof} It follows from Lemma \ref{nte2} that
$$\mu(m,(1-\Delta_p)^{-p/2})=\frac{\pi^{p/2}}{\Gamma(1+\frac{p}{2})}\frac1{m+1}+O((m+1)^{-1-\frac1p}).$$
Therefore,
$$\sum_{m=0}^n\mu(m,(1-\Delta_p)^{-p/2})=\frac{\pi^{p/2}}{\Gamma(1+\frac{p}{2})}\log(n+1)+O(1).$$
The assertion follows from Theorem \ref{spectral}.
\end{proof}

\section{An easy counter-example to formula~\ref{cfub formula}}\label{easy}

In Theorem \ref{hard example}, we required that both operators $T_1(1+D^2)^{-1/2}$ and $T_2(1+D^2)^{-1/2}$ are universally measurable. In this appendix, we show that a simpler counter-example with $T_2=1$ does exist if the requirement of universal measurability of $T_1(1+D_1)^{-p/2}$ is omitted. This gives a counter-example to the formula because one does not take into account the correction of the limiting process by powers (cf as in Lemma \ref{connes lemma}).

\begin{lemma} There exist a $0\leq T_1\in\mathcal{L}(l_2),$ a universally measurable operator $(1+D^2)^{-1/2}\in\mathcal{L}_{1,\infty}$ and a Dixmier trace\footnote{See \eqref{dixtrdef} for the definition of Dixmier trace.} ${\rm Tr}_{\omega}$ such that
$${\rm Tr}_{\omega}((T_1\otimes 1)(1+D^2\otimes 1+1\otimes D^2)^{-1})\neq\frac{\pi}{4}{\rm Tr}_{\omega}(T_1(1+D^2)^{-1/2}).$$
\end{lemma}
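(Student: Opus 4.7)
The plan is to exploit the fact that $T_1(1+D^2)^{-1/2}$ is allowed to be merely Tauberian: different Dixmier traces can then take different values on it, and the tensor with the identity on the second factor implicitly rescales the underlying ``scale'' variable by a factor of two---this is precisely the $P_u$ correction in Theorem~\ref{connes lemma}, here with $u=1/2$. Take $D=\mathrm{diag}(\{k\}_{k\geq 0})$; then $\sum_{k=0}^n 1/\sqrt{1+k^2}=\log(n+1)+O(1)$, so $(1+D^2)^{-1/2}$ is universally measurable by Theorem~\ref{spectral}. Take $T_1=\mathrm{diag}(y)\geq 0$, where $y:\mathbb{Z}_+\to\{0,1\}$ is the indicator of $\{k\geq 2:\log k\in E\}$ with $E:=\bigcup_{m\geq 0}[2^{2m},2^{2m+1})$.

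For the one-dimensional side, the key observation is that $y$ has only the sparse jump points $\lceil\exp(2^n)\rceil$, so comparison of the sum with an integral yields
\[
S(m):=\sum_{k=1}^m\frac{y(k)}{\sqrt{1+k^2}}=\rho(\log m)+O(1),\qquad\rho(v):=\int_0^v\mathbf{1}_E(u)\,du.
\]
A direct dyadic calculation gives $\rho(2^{2M+1})=\rho(2^{2M+2})=(4^{M+1}-1)/3$, so $\rho(v)/v\to 2/3$ along $v=2^{2M+1}$ and $\rho(v)/v\to 1/3$ along $v=2^{2M+2}$. Applying Lemma~\ref{kalton} to the positive diagonal operator $T_1(1+D^2)^{-1/2}$ then gives $\sum_{k=0}^n\mu(k,T_1(1+D^2)^{-1/2})=S(n)+O(1)$, hence $\frac1{\log(n+2)}\sum_{k=0}^n\mu(k,T_1(1+D^2)^{-1/2})=\rho(\log n)/\log n+o(1)$.

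For the two-dimensional side, Lemma~\ref{double kalton} reduces the sum of the first $n$ eigenvalues of $(T_1\otimes 1)(1+D^2\otimes 1+1\otimes D^2)^{-1}$ to $\sum_{k,l=0}^{M}y(k)/(1+k^2+l^2)+O(1)$ with $M=\lfloor n^{1/2}\rfloor$. The inner sum over $l$ equals $\arctan(M/\sqrt{1+k^2})/\sqrt{1+k^2}+O((1+k^2)^{-1})$ by integral comparison; writing $\arctan(M/\sqrt{1+k^2})=\pi/2-\arctan(\sqrt{1+k^2}/M)$ and using $\arctan(x)\leq x$ to absorb the boundary term, I would obtain the uniform estimate
\[
\sum_{k,l=0}^M\frac{y(k)}{1+k^2+l^2}=\frac{\pi}{2}\,S(M)+O(1).
\]
Since $\log M=\tfrac12\log n+O(n^{-1/2})$, dividing by $\log(n+2)$ gives the two-dimensional partial ratio as $(\pi/4)\rho(\tfrac12\log n)/(\tfrac12\log n)+o(1)$.

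Set $n_M:=\lceil\exp(2^{2M+1})\rceil$, so that $\log n_M\approx 2^{2M+1}$ is a peak of $\rho(v)/v$ while $\tfrac12\log n_M\approx 2^{2M}$ is a trough. Extend the free filter generated by $\{\{n_M:M\geq M_0\}:M_0\geq 1\}$ to a free ultrafilter $\omega$ on $\mathbb{N}$ (Zorn's lemma). Along $\omega$, the one-dimensional ratio tends to $2/3$ and the two-dimensional ratio tends to $(\pi/4)\cdot 1/3=\pi/12$, so
\[
\mathrm{Tr}_\omega\bigl((T_1\otimes 1)(1+D^2\otimes 1+1\otimes D^2)^{-1}\bigr)=\pi/12\neq\pi/6=\tfrac{\pi}{4}\mathrm{Tr}_\omega\bigl(T_1(1+D^2)^{-1/2}\bigr),
\]
which is the required inequality. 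The main obstacle is the uniform $O(1)$ estimate displayed above: one must carefully handle the range $k\sim M$ where the $\arctan$ is far from $\pi/2$; everything else is bookkeeping around Lemmas~\ref{kalton} and~\ref{double kalton}.
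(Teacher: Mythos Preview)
Your argument is correct. The ``main obstacle'' you flag is not an obstacle at all: since $0\leq y\leq 1$, the bound $\arctan x\leq x$ gives
\[
0\leq\sum_{k=0}^M \frac{y(k)}{\sqrt{1+k^2}}\arctan\!\Big(\frac{\sqrt{1+k^2}}{M}\Big)\leq \sum_{k=0}^M\frac{1}{M}\leq 2,
\]
so the displayed $O(1)$ estimate is immediate, and the rest is indeed bookkeeping with Lemmas~\ref{kalton} and~\ref{double kalton}.

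Your route differs from the paper's in two respects. First, the paper chooses $T_1=\mathrm{diag}(\chi_{\cup_m[n_{2m},n_{2m+1})})$ with the much stronger sparsity $\log n_m=o(\log n_{m+1})$, then argues by contradiction: if the Fubini identity held for every Dixmier trace it would force a certain $o(\log n)$ relation, which visibly fails along $n=n_{2m}^2$ because the two-dimensional contribution is negligible there. Second, the paper stops at the $\arctan$ expression and uses the extreme sparsity to make one side dominate, whereas you push one step further with $\arctan x\leq x$ to obtain the clean identity $\sum_{k,l\leq M}=(\pi/2)S(M)+O(1)$, which lets you work with the milder spacing $\log n_m\sim 2^m$. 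Your construction is then explicit rather than by contradiction: you build a single ultrafilter along which the one-dimensional and two-dimensional Ces\`aro ratios converge to $2/3$ and $\pi/12$ respectively, directly exhibiting the $P_{1/2}$ scale mismatch described around Theorem~\ref{connes lemma}. The paper's approach is marginally shorter; yours is more transparent about \emph{why} the identity fails (the half-shift in $\log$-scale lands at the opposite phase of $\rho(v)/v$) and yields concrete values of the two traces.
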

\begin{proof} Set $D={\rm diag}(\{k\}_{k\geq0})$ and $T_1={\rm diag}(\{x(k)\}_{k\geq0})$ with $x=\chi_{\cup_m[n_{2m},n_{2m+1})},$ where $\log(n_m)=o(\log(n_{m+1}))$ as $m\to\infty.$ Suppose that for every Dixmier trace ${\rm Tr}_{\omega}$ we have
\begin{equation}\label{loj}
{\rm Tr}_{\omega}\Big({\rm diag}\Big(\Big\{\frac{x(k)}{1+k^2+l^2}\Big\}_{k,l\geq0}\Big)\Big)=\frac{\pi}{4}{\rm Tr}_{\omega}\Big({\rm diag}\Big(\Big\{\frac{x(k)}{k+1}\Big\}_{k\geq0}\Big)\Big).
\end{equation}
In what follows, we omit ${\rm diag}$ to lighten the notations. Using definition \eqref{dixtrdef} of Dixmier traces, we can equivalently rewrite \eqref{loj} as
$$\lim_{n\to\omega}\frac1{\log(n+2)}\Big(\sum_{i=0}^n\mu\Big(i,\Big\{\frac{x(k)}{1+k^2+l^2}\Big\}_{k,l\geq0}\Big)-\frac{\pi}{4}\sum_{i=0}^n\mu\Big(i,\Big\{\frac{x(k)}{k+1}\Big\}_{k\geq0}\Big)\Big)=0$$
for every ultrafilter $\omega.$ Equivalently, we have
$$\sum_{i=0}^n\mu\Big(i,\Big\{\frac{x(k)}{1+k^2+l^2}\Big\}_{k,l\geq0}\Big)-\frac{\pi}{4}\sum_{i=0}^n\mu\Big(i,\Big\{\frac{x(k)}{k+1}\Big\}_{k\geq0}\Big)=o(\log(n)),\quad n\to\infty.$$
Lemma \ref{double kalton} states that
$$\sum_{i=0}^n\mu\Big(i,\Big\{\frac{x(k)}{1+k^2+l^2}\Big\}_{k,l\geq0}\Big)-\sum_{k,l=0}^{\floor{n^{1/2}}}\frac{x(k)}{1+k^2+l^2}=O(1),$$
while Lemma \ref{kalton} states that
$$\sum_{i=0}^n\mu\Big(i,\Big\{\frac{x(k)}{k+1}\Big\}_{k\geq0}\Big)-\sum_{k=0}^n\frac{x(k)}{k+1}=O(1).$$
We have
$$\sum_{k,l=0}^{\floor{n^{1/2}}}\frac{x(k)}{1+k^2+l^2}=\sum_{k=0}^{\floor{n^{1/2}}}x(k)\sum_{l=0}^{\floor{n^{1/2}}}\frac1{1+k^2+l^2}=$$
$$=\sum_{k=0}^{\floor{n^{1/2}}}x(k)\Big(O(\frac1{1+k^2})+\int_0^{n^{1/2}}\frac{dt}{1+t^2+k^2}\Big)=$$
$$=\sum_{k=0}^{\floor{n^{1/2}}}\frac{x(k)}{(k^2+1)^{1/2}}\tan^{-1}((\frac{n}{k^2+1})^{\frac12})+O(1).$$

Thus,
\begin{equation}\label{lojnoe uravnenie}
\frac{\pi}{4}\sum_{k=0}^n\frac{x(k)}{k+1}-\sum_{k=0}^{\floor{n^{1/2}}}\frac{x(k)}{k+1}\tan^{-1}((\frac{n}{k^2+1})^{\frac12})=o(\log(n)),\quad n\to\infty.
\end{equation}

We now show that \eqref{lojnoe uravnenie} actually fails. For $n=n_{2m}^2,$ we have
$$\sum_{k=0}^n\frac{x(k)}{k+1}\geq\sum_{k=n_{2m}}^{n_{2m}^2}\frac1{k+1}=\log(n_{2m})+O(1)=\frac12\log(n)+O(1)$$
and, taking into account that $x$ vanishes on the interval $[n_{2m-1},n_{2m}),$ we have
$$\sum_{k=0}^{\floor{n^{1/2}}}\frac{x(k)}{k+1}\tan^{-1}((\frac{n}{k^2+1})^{\frac12})\leq\frac{\pi}{2}\sum_{k=0}^{n_{2m-1}}\frac1{k+1}=\frac{\pi}{2}\log(n_{2m-1})+O(1)=o(\log(n)).$$
Hence, \eqref{lojnoe uravnenie} fails for such $x$ as $n=n_{2m}^2$ and $m\to\infty.$
\end{proof}


\begin{thebibliography}{99}
\bibitem{BF} Benameur M., Fack T. {\it Type II non-commutative geometry. I. Dixmier trace in von Neumann algebras.} Adv. Math. {\bf 199} (2006), no. 1, 29--87.
\bibitem{CRSZ} Carey A., Rennie A., Sukochev F., Zanin D. {\it Universal measurability and the Hochschild class of the Chern character.} J. Spectr. Theory {\bf 6} (2016), 1--41.
\bibitem{ch_pal} Chakraborty P., Pal A. {\it Equivariant spectral triples on the quantum SU(2) group.} K-Theory {\bf 28} (2003), no. 2, 107--126.
\bibitem{Connes} Connes A. {\it Noncommutative Geometry.} Academic Press, San Diego, 1994.
\bibitem{Connes_suq2} Connes A. {\it Cyclic cohomology, quantum group symmetries and the local index formula for SU$_q(2).$} J. Inst. Math. Jussieu {\bf 3} (2004), no. 1, 17--68.
\bibitem{Davidson} Davidson K. {\it $C^*-$algebras by example.} Fields Institute Monographs, {\bf 6}. American Mathematical Society, Providence, RI, 1996.
\bibitem{DFWW} Dykema K., Figiel T., Weiss G., Wodzicki M. {\it Commutator structure of operator ideals.} Adv. Math. {\bf 185} (2004), no. 1, 1--79.
\bibitem{DK} Dykema K., Kalton N. {\it Spectral characterization of sums of commutators. II.} J. Reine Angew. Math. {\bf 504} (1998), 127--137.
\bibitem{GMS} Gelfand I., Minlos R., Shapiro Z. {\it Representations of the rotation and Lorentz groups and their applications.} Oxford-London-New York-Paris: Pergamon Press. xviii, 366 pp. (1963); Moskva: Gosudarstv. Izdat. Fiz.-Mat. Lit., 368 pp. (1958).
\bibitem{Gilkey} Gilkey P. {\it Invariance theory, the heat equation, and the Atiyah-Singer index theorem.} Mathematics Lecture Series, {\bf 11}. Publish or Perish, Inc., Wilmington, DE, 1984.
\bibitem{greenbook} Gracia-Bondia J., Varilly J., Figueroa H. {\it Elements of noncommutative geometry.} Birkh\"auser Advanced Texts: Basler Lehrb\"ucher. Birkh\"auser Boston, Inc., Boston, MA, 2001.
\bibitem{Kcomm} Kalton N. {\it Spectral characterization of sums of commutators. I.} J. Reine Angew. Math. {\bf 504} (1998), 115--125.
\bibitem{KLPS} Kalton N., Lord S., Potapov D., Sukochev F. {\it Traces of compact operators and the noncommutative residue.} Adv. Math. {\bf 235} (2013), 1--55.
\bibitem{KSh} Klimyk A., Schm\"udgen K. {\it Quantum groups and their representations.} Texts and Monographs in Physics. Springer-Verlag, Berlin, 1997.
\bibitem{LPS} Lord S., Potapov D., Sukochev F. {\it Measures from Dixmier traces and zeta functions.} J. Funct. Anal. {\bf 259} (2010), no. 8, 1915--1949.
\bibitem{LSZ} Lord S., Sukochev F., Zanin D. {\it Singular Traces: Theory and Applications.} volume 46 of Studies in Mathematics. De Gruyter, 2013.
\bibitem{Rosenberg}  Rosenberg S. {\it The Laplacian on a Riemannian manifold. An introduction to analysis on manifolds.} London Mathematical Society Student Texts, {\bf 31}. Cambridge University Press, Cambridge, 1997.
\bibitem{SS} Sedaev A., Sukochev F. {\it Dixmier measurability in Marcinkiewicz spaces and applications.} J. Funct. Anal. {\bf 265} (2013), no. 12, 3053--3066.
\bibitem{SUZ1} Sukochev F., Usachev A., Zanin D. {\it Generalized limits with additional invariance properties and their applications to noncommutative geometry.} Adv. Math. {\bf 239} (2013), 164--189. 
\bibitem{SUZ2} Sukochev F., Usachev A., Zanin D. {\it Dixmier traces generated by exponentiation invariant generalised limits.} J. Noncommut. Geom. 8 (2014), no. 2, 321--336.
\bibitem{urojai} Sukochev F., Zanin D. {\it $\zeta-$function and heat kernel formulae.} J. Funct. Anal. {\bf 260} (2011), no. 8, 2451--2482.
\end{thebibliography}
\end{document}